\newtheorem{theorem}{Theorem}[section]
\newtheorem{lemma}[theorem]{Lemma}
\newtheorem{corollary}[theorem]{Corollary}
\newtheorem{fact}[theorem]{Fact}
\newtheorem{proposition}[theorem]{Proposition}
\newtheorem{claim}[theorem]{Claim}
\theoremstyle{definition}
\newtheorem{example}[theorem]{Example}
\newtheorem{remark}[theorem]{Remark}
\newtheorem{definition}[theorem]{Definition}
\newtheorem{question}[theorem]{Question}
\def\hsd{\operatorname{HSD}}
\def\acl{\operatorname{acl}}
\def\dcl{\operatorname{dcl}}
\def\tp{\operatorname{tp}}
\def\cb{\operatorname{Cb}}
\def\spec{\operatorname{Spec}}
\def\sym{\operatorname{Sym}}
\def\id{\operatorname{id}}
\def\hom{\operatorname{Hom}}
\def\O{\mathcal{O}}
\def\E{\mathcal E}
\def\jet{\operatorname{Jet}}
\def\D{\mathcal{D}}
\def\alg{\operatorname{alg}}
\def\res{\operatorname{Res}}
\newcommand{\cD}{{\mathcal D}}
\newcommand{\CC}{{\mathbb C}}
\newcommand{\NN}{{\mathbb N}}
\newcommand{\ZZ}{{\mathbb Z}}
\newcommand{\QQ}{{\mathbb Q}}
\begin{document}

\title[Generalised Hasse-Schmidt varieties and their jet spaces]{Generalised Hasse-Schmidt varieties\\ and their jet spaces}

\author{Rahim Moosa}
\address{Rahim Moosa\\
University of Waterloo\\
Department of Pure Mathematics\\
200 University Avenue West\\
Waterloo, Ontario \  N2L 3G1\\
Canada}
\email{rmoosa@math.uwaterloo.ca}

\thanks{R. Moosa was supported by an NSERC Discovery Grant.}

\author{Thomas Scanlon}
\address{Thomas Scanlon\\
University of California, Berkeley\\
Department of Mathematics\\
Evans Hall\\
Berkeley, CA \ 94720-3480\\
USA}
\email{scanlon@math.berkeley.edu}

\thanks{T. Scanlon was partially supported by NSF Grant  CAREER
DMS-0450010, and a Templeton Infinity Grant.}

\date{November 1, 2010}

\subjclass[2000]{Primary 12H99, 14A99.}
 
\begin{abstract}
Building on the abstract notion of prolongation developed in~\cite{paperA}, the theory of {\em iterative Hasse-Schmidt rings} and {\em schemes} is introduced, simultaneously generalising  difference and (Hasse-Schmidt) differential rings and schemes.
This work provides a unified formalism for studying difference and differential
algebraic geometry, as well as other related geometries.
As an application, Hasse-Schmidt jet spaces are constructed generally, allowing the development of the theory for arbitrary systems of algebraic partial difference/differential equations, where constructions by earlier authors applied only to the finite-dimensional case.
In particular, it is shown that under appropriate separability assumptions a Hasse-Schmidt variety is determined by its jet spaces at a point.
\end{abstract}

\maketitle

\tableofcontents
\newpage

\section{Introduction}

The algebraic theories of ordinary and partial differential equations, difference equations, Hasse-Schmidt differential equations, and mixed difference-differential equations bear many formal analogies and some of the theory may be developed uniformly under the rubric of equations over rings with fixed additional operators.  In this paper, a continuation of~\cite{paperA}, we propose a unified theory of rings with stacks of compatible operators, what we call generalised iterative Hasse-Schmidt rings, and then undertake a detailed study of the infinitesimal structure of Hasse-Schmidt varieties showing how to define jet spaces for these Hasse-Schmidt varieties and that the jet spaces determine the varieties under a separability hypothesis.

Before we consider Hasse-Schmidt rings in full generality, let us consider the special case of ordinary differential rings.  Here we have a commutative ring $R$ given together with a derivation $\partial:R \to R$.   At one level, to say that $\partial$ is a derivation is simply to say that $\partial$ is additive and satisfies the Leibniz rule.  On the other hand, we could say that the exponential map $R \to R[\epsilon]/(\epsilon^2)$ given by $x \mapsto x + \partial(x) \epsilon$ is a ring homomorphism.    When $R$ is a ${\mathbb Q}$-algebra, this truncated exponential map lifts to a ring homomorphism $R \to R[[\epsilon]]$ given by $x \mapsto \sum \frac{1}{n!} \partial^n(x) \epsilon^n$.  If we define $\partial_n(x) := \frac{1}{n!} \partial^n(x)$, then the exponential map takes the form $x \mapsto \sum \partial_n(x) \epsilon^n$.   Let us note that we have a formula relating composites of the $\partial_n$ operators with single applications.  Indeed, $\binom{n+m}{n} \partial_{n+m} = \frac{(n+m)!}{n! m!} \frac{1}{(n+m)!} \partial^{n+m} = \frac{1}{n!} \partial^n \circ \frac{1}{m!} \partial^m = \partial_n \circ \partial_m$.

From the defining equation for $\partial_n$, it is clear that it gives no more information than is already given by the first derivative $\partial$.  However, we could consider the general category of Hasse-Schmidt differential rings which are rings $R$ given together with a sequence of additive operators $\partial_n:R \to R$ for which the function $R \to R[[\epsilon]]$ given by $a \mapsto \sum_{n=0}^\infty \partial_n(a) \epsilon^n$ is a ring homomorphism, and the operators satisfy the rule that $\partial_0=\id$ and $\binom{n+m}{n} \partial_{n+m} = \partial_n \circ \partial_m$.  Dropping the hypothesis that $R$ is a ${\mathbb Q}$-algebra, one finds Hasse-Schmidt differential rings for which the higher operators need not be determined by the first derivative.  Indeed, the language of Hasse-Schmidt differential rings is the appropriate framework for studying differential equations in positive characteristic.  

As explained already by Matsumura (Section~27 of~\cite{matsumura} ), the iteration rule, $\binom{n+m}{n} \partial_{n+m} =  \partial_n \circ \partial_m$, may be expressed as a commuting diagram.
Let $(R,\langle \partial_i : i \in {\mathbb N} \rangle)$ be a Hasse-Schmidt differential ring.   That is to say, the map $E_\epsilon:R \to R[[\epsilon]]$ given by $x \mapsto \sum \partial_n(x) \epsilon^n$ is a ring homomorphism.  Extending each $\partial_n$ continuously to $R[[\epsilon]]$ by defining $\partial_0(\epsilon) := \epsilon$ and $\partial_n(\epsilon) := 0 $ for $n > 0$, we obtain a second exponential homomorphism $E_\eta:R[[\epsilon]] \to R[[\epsilon]][[\eta]]$.  On the other hand, there is a natural continuous homomorphism $\Delta:R[[\zeta]] \to R[[\epsilon]][[\eta]]$ given by $\zeta \mapsto (\epsilon + \eta)$.  Expanding the powers of $(\epsilon + \eta)$, one sees easily that the iteration rule holds if and only if $\Delta \circ E_\zeta = E_\eta \circ E_\epsilon$.  That is, the following diagram is commutative.

$$\begin{CD}  R @>{E_\epsilon}>> R[[\epsilon]] \\ @V{E_\zeta}VV   @VV{E_\eta}V \\ R[[\zeta]] @>{\zeta \mapsto (\epsilon + \eta)}>> R[[\epsilon]][[\eta]] \end{CD}$$

We generalise this ring-theoretic treatment of iterative Hasse-Schmidt differential rings to produce a theory of generalised Hasse-Schmidt rings by encoding the generalised Leibniz rules via exponential maps and the iteration rules via a commutative diagram analogous to the one describing the iteration rule for Hasse-Schmidt derivations.   To present a notion of an iterative Hasse-Schmidt ring we need two kinds of data.  First, we need a projective system of finite free ring schemes $\underline{\cD} := ( \pi_{i,j}:\cD_i \to \cD_j \ | \ 0 \leq j \leq i<\omega)$.  That is, we ask that each $\cD_i$ is, as an additive group scheme, simply some finite cartesian power of the usual additive group scheme while multiplication is given by some regular functions.  A $\underline{\cD}$-ring structure on $R$ is then given by a sequence of ring homomorphisms $E_i:R \to \cD_i(R)$ which are compatible with the projective system.
So in the differential setting $\D_i(R)$ was $R[\epsilon]/(\epsilon^{i+1})$ and $E_i$ was $a\mapsto\sum_{n=0}^i\partial_n(a)\epsilon^n$.
In this general setting, fixing the identifications of each $\cD_i$ with a power of the additive group, the map $E_i$ may be presented as $x \mapsto (\partial_0^{(i)}(x),\ldots,\partial_{m_i}^{(i)}(x))$ where each $\partial_k^{(i)}:R \to R$ is an additive operator.  To say that these operators give $R$ a $\underline{\cD}$-ring structure is equivalent to imposing certain generalised Leibniz rules and identities relating the components of $E_i$ to those of $E_j$.   The second kind of data we require is a collection of morphisms of ring schemes $\Delta_{i,j}:\cD_{i+j} \to \cD_i \circ \cD_j$.  For $(R,\langle E_i:i\in\mathbb N\rangle)$ to be an iterative $\underline\D$-ring we require the following diagrams to commute.
$$\begin{CD}  R @>{E_j}>> \cD_j(R) \\ @V{E_{i+j}}VV   @VV{\cD_j(E_i)}V \\ \cD_{i+j}(R) @>{\Delta_{i,j}}>> \cD_i(\cD_j(R)) \end{CD}$$
We were led to this notion of iteration by considering Matsumura's presentation of
the theory for Hasse-Schmidt derivations.

This theory of generalised iterative Hasse-Schmidt rings is developed in Section~\ref{genhasssys}.
In the appendix we discuss several examples, other than the differential one, showing that this formalism captures many of the interesting cases of rings with distinguished operators.

Our main goal is to understand algebraic equations involving Hasse-Schmidt operators and these equations are naturally encoded by Hasse-Schmidt schemes, or really, Hasse-Schmidt subschemes of algebraic schemes.   To make the issues more concrete, a $\underline{\cD}$-equation in some $\underline{\cD}$-ring $R$ is simply an algebraic equation on the variables and several of the operators $\partial_k^{(i)}$ applied to the variables.  As such, the set of solutions naturally forms a subset of the $R$-points of some algebraic scheme $X$ and the equations themselves are encoded by projective systems of subschemes of prolongation spaces of $X$.  We shall refer to these projective systems as $\underline{\cD}$-schemes.
They are studied in some detail in Section~\ref{sec-hassescheme}.

If $X$ is an algebraic variety over a field $k$, then by the $n$th jet space of $X$ at a point $p\in X(k)$ we mean the space $\hom_k\big(\mathfrak m_{X,p}/\mathfrak m_{X,p}^{n+1},k\big)$.
In Section~\ref{sec-hassejet} we define jet spaces for $\underline{\cD}$-varieties and show that they have enough points to distinguish between different $\underline{\cD}$-subvarieties, at least under an appropriate separability hypothesis.  We have already encountered the main difficulty in~\cite{paperA}; the prolongation space and jet space functors do not commute.
However, in that paper, and based on a prototype already appearing in the work of Pillay and Ziegler (section~5 of~\cite{pillayziegler03}), we introduced an {\em interpolation map} which compares the jet space of a prolongation with the prolongation of a jet space.
This is the key technical ingredient in our construction of jet spaces for $\underline{\cD}$-varieties.

To close this introduction, let us be clear about our aims in the present
paper.  We develop the geometry of algebraic equations involving
additional operators.   While our setting may be regarded as a generalisation
of difference, differential, and Hasse-Schmidt differential algebra, our main goal is
to unify these subjects rather than to generalise them (though our formalism
does allow for such a generalisation).  This unification manifests itself not
only in proofs and constructions which apply equal well to each of the
principal examples, but in a precise formalism for studying confluence
between Hasse-Schmidt differential and difference algebraic geometry.   In terms of
the geometry, our primary goal is to make sense of the linearisation of
general $\underline{\mathcal D}$-equations through a jet space construction and
then to show that these linear spaces determine the $\underline{\mathcal
D}$-varieties, at least under suitable separability hypotheses.
By the Krull intersection theorem this last point is a tautology for
algebraic varieties, but it is far from obvious even when one specialises to 
a well-known theory of fields with operators such as partial difference or differential
algebra.
For {\em finite-dimensional} difference/differential varieties, jet spaces were constructed by Pillay and Ziegler~\cite{pillayziegler03}.
Our theory extends theirs to the infinite-dimensional setting.

In the present paper, we do not develop the model theory of general
$\underline{\mathcal D}$-fields and leave such questions as the existence of model
companions, simplicity, the behaviour of ranks,
\emph{et cetera} to a later work.  Jet spaces were the key technical devices
of the Pillay-Ziegler geometric proofs of the dichotomy theorem for minimal types in differentially closed fields of characteristic zero.
In~\cite{MPS}, arc spaces substituted for jet spaces to extend the dichotomy
theorem to {\em regular} types.
While arc spaces did the job in the differential case, jet spaces are preferable because they give a direct linearisation of the equations.
Provided that
the foundational model-theoretic issues are resolved, our theorem on
$\underline{\mathcal D}$-jet spaces determining $\underline{\mathcal D}$-varieties should give information about canonical bases of (quantifier-free) types in the corresponding theory of $\underline\cD$-fields.

Likewise, there are some closely allied algebraic issues we do not pursue
here.  For example,
jet spaces are clearly connected to a general theory of $\underline{\mathcal
D}$-modules. Moreover, we have not fleshed out the theory of specialisations
of $\underline{\mathcal D}$-rings nor in its local form a theory of valued $\underline{\mathcal
D}$-fields.  Each of these further developments motivates our research into
jet spaces for Hasse-Schmidt varieties and will be taken up in future work.

We are very grateful to the
referee for making some very helpful suggestions.

\section{Generalised Hasse-Schmidt rings}
\label{genhasssys}

\noindent
Let us recall the following conventions and definitions from~\cite{paperA}.   In this paper, all our rings are commutative and unitary and all our ring homorphisms preserve the identity. All schemes are separated.
A {\em variety} is a reduced scheme of finite-type over a field, but is not necessarily irreducible.
Throughout this paper we fix a ring $A$ and work in the categories of $A$-algebras and schemes over $A$.

The {\em standard ring scheme} $\mathbb{S}$ over $A$ is the scheme $\spec\big(A[x]\big)$ endowed with the usual ring scheme structure.
So for all $A$-algebras $R$, $\mathbb{S}(R)=(R,+,\times,0,1)$.
An {\em $\mathbb{S}$-algebra scheme} $\E$ over $A$ is a ring scheme together with a ring scheme morphism $s_\E:\mathbb{S}\to\E$ over $A$.
We view $\mathbb{S}$ as an $\mathbb{S}$-algebra via the identity $\id:\mathbb{S}\to\mathbb{S}$.
A {\em morphism} of $\mathbb{S}$-algebra schemes is then a morphism of ring schemes respecting the $\mathbb{S}$-algebra structure.
Similarly one can define {\em $\mathbb{S}$-module} schemes and morphisms.

\begin{definition}
\label{finitefree}
By a {\em finite free $\mathbb{S}$-algebra scheme with basis} we mean an $\mathbb{S}$-algebra scheme $\mathcal{E}$ together with an $\mathbb{S}$-module isomorphism $\psi_\E:\mathcal{E}\to\mathbb{S}^\ell$, for some $\ell\in\mathbb{N}$.
\end{definition}

The data of a finite free $\mathbb{S}$-algebra scheme with basis is really nothing more than a finite free $A$-algebra with an $A$-basis.
Indeed, fixing $\psi_{\mathcal{E}}$ means that we have a canonical choice of basis $\{1,e_1,\dots,e_{\ell-1}\}$ for $\mathcal{E}(A)$ over $A$.
Write $\displaystyle e_ie_j=\sum_{k=1}^{\ell-1}a_{i,j,k}e_k$ where $a_{i,j,k}\in A$.
So for any $A$-algebra $R$,
$\E(R)$ is the $R$-algebra $R[X_1,\dots,X_{\ell-1}]/I$ where $I$ is generated by polynomials of the form $\displaystyle X_iX_j-\sum_{k=1}^{\ell-1}a_{i,j,k}X_k$.
This means that we can canonically identify $\mathcal{E}(R)$ with $R\otimes_A\mathcal{E}(A)$, both as an $R$-algebra and an $\mathcal{E}(A)$-algebra.
In particular, $\mathcal E$ is determined by $\E(A)$.
Conversely, every finite free $A$-algebra with an $A$-basis naturally determines a finite free $\mathbb S$-algebra scheme with basis.
Indeed, if the given $A$-basis of $B$ is $\{b_0,\dots,b_{\ell-1}\}$ then write $\displaystyle b_ib_j=\sum_{k=0}^{\ell-1}a_{i,j,k}b_k$ and let $\E$ be the $\mathbb S$-algebra scheme whose underying scheme is $\mathbb A_A^\ell$, addition is co-ordinatewise, and comultiplication $A[Z_0,\dots,Z_{\ell-1}]\to A[Z_0,\dots,Z_{\ell-1}]\otimes_AA[Z_0,\dots,Z_{\ell-1}]$ is given by $\displaystyle Z_k\mapsto \sum_{i,j}a_{i,j,k}(Z_i\otimes Z_j)$.
Then $\E(A)=B$.

\smallskip

Given a finite free $\mathbb{S}$-algebra scheme $\E$, an {\em $\E$-ring} is an $A$-algebra $k$ together with an $A$-algebra homomorphism $e:k\to\E(k)$.
A detailed study of $\E$-rings was carried out in~\cite{paperA}, and we will assume the results of that paper in what follows.
We are interested here in rings equipped with an entire directed system of $\E$-ring structures for various $\E$.
The following definition of a Hasse-Schmidt system is a variation on Definition~2.1.1 of the second author's PhD thesis~\cite{Sc-thesis}, differing in a few important details.

\begin{definition}[Hasse-Schmidt system]
\label{hasse-system}
A {\em generalised Hasse-Schmidt system} over $A$ is a projective system of finite free $\mathbb{S}$-algebra schemes with bases over $A$,
$$\underline{\D}=(\pi_{m,n}:\D_m\to\D_n\ | \ n\leq m<\omega),$$
such that $\D_0=\mathbb{S}$
and the transition maps $\pi_{m,n}$ are surjective ring scheme morphisms over $A$.
We denote by $s_n:\mathbb{S}\to\D_n$ the $\mathbb{S}$-algebra structure on $\D_n$ and by $\psi_n:\D_n\to\mathbb{S}^{\ell_n}$ the $\mathbb{S}$-module isomorphisms witnessing a basis for $\D_n$.
\end{definition}

\begin{remark}
\label{noscheme}
The use of scheme-theoretic language for describing Hasse-Schmidt systems is mostly a matter of taste and convenience; it can easily be avoided.
Indeed, by the discussion following Definition~\ref{finitefree} above, evaluating at $A$ yields a bijective correspondence between Hasse-Schmidt systems and projective systems of finite free $A$-algebras equipped with $A$-bases.
\end{remark}

\begin{definition}[Hasse-Schmidt ring]
\label{dring}
Suppose $\underline{\D}$ is a Hasse-Schmidt system over $A$.
A {\em generalised Hasse-Schmidt ring} (or {\em $\underline{\D}$-ring}) {\em over $A$} is an $A$-algebra equipped with a system of $\D_n$-ring structures that are compatible with $\pi$.
That is, a $\underline{\D}$-ring is a pair $(k,E)$ where $k$ is an $A$-algebra and $E=(E_n:k\to\D_n(k) \ | \ n\in\mathbb{N})$ is a sequence of $A$-algebra homomorphisms such that
\begin{itemize}
\item[(i)]
$E_0=\id$,
\item[(ii)]
the following diagram commutes for all $m\geq n$
$$\xymatrix{
\D_m(k)\ar[rr]^{\pi_{m,n}^k} && \D_n(k)\\
& k\ar[ul]^{E_m}\ar[ur]_{E_n}
}$$
\end{itemize}
\end{definition}

\begin{remark}
\begin{itemize}
\item[(a)]
One may equally well describe a $\underline \D$-ring by giving a collection of maps
$(\partial_{i,n}:k \to k \ |\ n \in {\mathbb N}, i \leq \ell_n)$ via the correspondence
$\psi_n \circ E_n = (\partial_{1,n},\ldots,\partial_{\ell_n,n})$.  That the
collection $(\partial_{i,n})$ so defines a $\underline \D$-ring structure on $k$ is equivalent to
the satisfaction of a certain system of functional equations.
\item[(b)]
Our choice of a natural-number-indexing for Hasse-Schmidt systems is convenient but not absolutely necessary.
Indeed, some contexts may be more naturally dealt with by considering Hasse-Schmidt systems indexed by $\mathbb{N}^r$ or even $\mathbb{Z}^r$.
However, indexing by $\mathbb{N}$ does simplify the exposition somewhat, and all our examples can be made to fit into this setting.
\end{itemize}
\end{remark}

The first example of a Hasse-Schmidt system is where each $\D_n=\mathbb{S}$ and $\pi_{m,n}=\psi_n=\id$.
Then for any $A$-algebra $k$, the only $\underline{\D}$-ring structure on $k$ is the trivial one with $E_n=\id$.
This example captures the context of rings without any additional structure.
Our main example, that of Hasse-Schmidt differential rings, is discussed below.
See the Appendix for a discussion of several other examples including difference rings, an analogue of $q$-iterative difference rings, and difference-differential rings.

\begin{example}[Hasse-Schmidt differential rings]
\label{differential}
Here $A=\mathbb Z$.
Consider the Hasse-Schmidt system $\operatorname{HSD}_e=(\pi_{m,n}:\D_m\to\D_n\ | \ n\leq m<\omega)$ where for any ring $R$
\begin{itemize}
\item
$\D_n(R)=R[\eta_1,\dots,\eta_e]/(\eta_1,\dots,\eta_e)^{n+1}$, where $\eta_1,\dots,\eta_e$ are indeterminates;
\item
$s_n^R:R\to\D_n(R)$ is the natural inclusion;
\item
$\psi_n^R: \D_n(R)\to R^{\ell_n}$ is an identification via a fixed ordering of the monomial basis of $R[\eta_1,\dots,\eta_e]/(\eta_1,\dots,\eta_e)^{n+1}$ over $R$; and,
\item
for $m\geq n$, $\pi_{m,n}^R:\D_m(R)\to\D_n(R)$ is the quotient map.
\end{itemize}
Note that this does uniquely determine a Hasse-Schmidt system (even considering only $R=\mathbb Z$, see Remark~\ref{noscheme}).
Writing $\displaystyle E_n(x)=\sum_{\alpha\in\mathbb{N}^e,|\alpha|\leq n}\partial_{\alpha}(x)\eta^{\alpha}$, an $\operatorname{HSD}_e$-ring is a ring $k$ together with a sequence of additive maps $(\partial_{\alpha}:k \to k \ |\ \alpha\in\mathbb N^e)$ satisfying
$\displaystyle \partial_{\alpha}(xy)=\sum_{\beta+\gamma=\alpha}\partial_{\beta}(x)\partial_{\gamma}(y)$,
$\partial_{\overline{0}}=\id$, and
$\partial_{\alpha}(1)=0$ for $|\alpha|>0$.

The principal example of an $\operatorname{HSD}_e$-ring is a ring equipped with $e$ Hasse-Schmidt derivations.
Recall that a {\em Hasse-Schmidt derivation} on a ring $k$ is a sequence of additive maps from $k$ to $k$, ${\bf D}=(D_0,D_1\dots)$, such that
\begin{itemize}
\item
$D_0=\id$ and
\item
$\displaystyle D_n(xy)=\sum_{a+b=n}D_a(x)D_b(y)$.
\end{itemize}
(cf. Section~27 of~\cite{matsumura}, for example.)
Suppose ${\bf D}_1,\dots,{\bf D}_e$ is a sequence of $e$ Hasse-Schmidt derivations on $k$ and set $\displaystyle E(x)=\sum_{\alpha\in\mathbb{N}^e} D_{1,\alpha_1}D_{2,\alpha_2}\cdots D_{e,\alpha_e}(x)\eta^{\alpha}$.
Then
$E:k\to k[[\eta_1,\dots,\eta_e]]$
is a ring homomorphism and we can view it as a system $(E_n \ |\ n<\omega)$ where $E_n$ is the composition of $E$ with the quotient $k[[\eta_1,\dots,\eta_e]]\to k[\eta_1,\dots,\eta_e]/(\eta_1,\dots,\eta_e)^{n+1}$.
Then $(k,E)$ is an $\operatorname{HSD}_e$-ring.

This example specialises further to the case of {\em partial differential fields in characteristic zero}.
Suppose $k$ a field of characteristic zero and $\partial_1,\dots,\partial_e$ are derivations on $k$.
Then $\displaystyle D_{i,n}:=\frac{\partial_i^n}{n !}$, for $1\leq i\leq e$ and $n\geq 0$, defines a sequence of Hasse-Schmidt derivations on $k$.
The $\operatorname{HSD}_e$-ring structure on $k$ is given in multi-index notation by $\displaystyle E_n(x):=\sum_{\alpha\in\mathbb{N}^e, |\alpha|\leq n}\frac{1}{\alpha!}\partial^\alpha(x)\eta^\alpha$ where $\partial:=(\partial_1,\dots,\partial_e)$.

On the other hand we can specialise in a different direction to deal with {\em fields of finite imperfection degree}.
The following example is informed by~\cite{ziegler03}:
suppose $k$ is a field of characteristic $p>0$ with imperfection degree $e$.
Let $t_1,\dots, t_e$ be a $p$-basis for $k$.
Consider $\mathbb{F}_p[t_1,\dots,t_e]$ and for $1\leq i\leq e$ and $n\in\mathbb{N}$, define
$${\bf D}_{i,n}(t_1^{\alpha_1}\cdots t_e^{\alpha_e}):=\left(\begin{array}{c}\alpha_i\\n\end{array}\right)t_1^{\alpha_1}\cdots t_i^{\alpha_i-n}\cdots t_e^{\alpha_e}.$$
and extend by linearity to $\mathbb{F}_p[t_1,\dots,t_e]$.
Then $({\bf D}_1,\dots,{\bf D}_e)$ is a sequence of Hasse-Schmidt derivations on $\mathbb{F}_p[t_1,\dots,t_e]$.
Moreover, they extend uniquely to Hasse-Schmidt derivations on $k$ (see Lemma~2.3 of~\cite{ziegler03}).
This gives rise to an $\operatorname{HSD}_e$-ring structure on $k$.

It is not the case that every $\operatorname{HSD}_e$-ring is a Hasse-Schmidt differential ring.
In section~\ref{subsect-iterativity} below we will introduce the notion of {\em iterativity} for Hasse-Schmidt systems, which when applied to this example will allow us to capture exactly the (commuting and iterative) Hasse-Schmidt differential rings.
\end{example}

\subsection{Hasse-Schmidt prolongations}
A generalised Hasse-Schmidt structure on a ring $k$ induces, for every algebraic scheme $X$ over $k$, a sequence of (abstract) prolongations of $X$ in the sense of~\cite{paperA}.
We recall the construction here.

Fix a generalised Hasse-Schmidt system $\underline\D$ over $A$, and a $\underline\D$-ring $(k,E)$.

\begin{definition}[The exponential algebra structure]
For each $n$, by the {\em exponential} $k$-algebra structure on the underlying ring of $\D_n(k)$ we mean the $k$-algebra structure coming from the ring homomorphism $E_n:k\to\D_n(k)$.
We denote this $k$-algebra by $\D_n^{E_n}(k)$.
More generally, given any $k$-algebra $a:k\to R$,
$\D_n^{E_n}(R)$ denotes the {\em exponential} $k$-algebra given by
$$\xymatrix{
k\ar[rr]^{E_n} && \D_n(k)\ar[rr]^{\D_n(a)} &&\D_n(R)
}$$
Note that as $A$-algebras, $\D_n^{E_n}(R)$ and $\D_n(R)$ are identical.
\end{definition}

\begin{definition}[Prolongations]
\label{defprolong}
Suppose $X$ is a scheme over $k$.
The {\em $n$th prolongation of $X$}, $\tau(X,\D_n,E_n)$, or just $\tau_n X$ for short, is the Weil restriction of $X\times_k\D_n^{E_n}(k)$ from $\D_n(k)$ to $k$ (when it exists).
We usually write $\tau X$ for $\tau_1 X$.
Note that the base extension is with respect to the exponential $k$-algebra, while the Weil restriction is with repect to the standard $k$-algebra.
\end{definition}

See $\S 2$ of~\cite{paperA} for details on the Weil restriction functor.
In particular, it follows from the discussion there (and it is well known) that the $\D_n(k)$ over $k$ Weil restriction functor takes affine schemes over $\D_n(k)$ to affine schemes over $k$.
As the base change functor also preserves affine schemes, we see that the prolongation of an affine scheme over $k$ is again an affine scheme over $k$.

But what is the prolongation really?
By Lemma~4.5 of~\cite{paperA}, its characteristic property is that for any $k$-algebra $R$, there is a canonical identification
\begin{equation}
\label{prolongpoint}
\tau_n X(R)=X\big(\D_n^{E_n}(R)\big).
\end{equation}
Indeed, recall that coming from the definition of prolongations via Weil restrictions we have an $n$th {\em canonical morphism} $r^X_n:\tau_n X\times_k\D_n(k)\to X$ for each $n\in\mathbb{N}$.
The identification $\tau_n X(R)=X\big(\D_n^{E_n}(R)\big)$ is then
by $p\mapsto r^X_n\circ\big(p\times_k\D_n(k)\big)$.
See Definition~4.3 of~\cite{paperA} for details.
One thing to remark is that $r^X_n$ is not over $k$ in the usual manner, rather we have the commuting diagram
\label{canonicalmap}
$$\xymatrix{
\tau_n X\times_k\D_n(k)\ar[rr]^{r^X_n}\ar[dr] && X\ar[dd]\\
& \spec\big(\D_n(k)\big)\ar[dr]_{\spec(E_n)} &\\
&& \spec(k)
}$$
where $\spec(E_n)$ is the morphism of schemes induced by $E_n:k\to\D_n(k)$.
Put another way, working at the level of co-ordinate rings, $r^X_n$ induces a $k$-algebra morphism from $k[X]$ to $\D_n^{E_n}\big(k[\tau_n X]\big)$.

\begin{remark}
\label{adjoint}
The prolongation functor thus induces a functor on $k$-algebras, assigning to the co-ordinate ring of an affine $k$-scheme $X$ the co-ordinate ring of the affine $k$-scheme $\tau_n X$, which is left-adjoint to the functor $R\mapsto\D_n^{E_n}(R)$.
Indeed, this is exactly what the displayed identity~(\ref{prolongpoint}) above asserts.
\end{remark}

\begin{definition}[The nabla map]
\label{nabla-def}
For $X$ a scheme over $k$, under the above identification, $E_n:k\to\D_n^{E_n}(k)$ induces a map $\nabla_n:X(k)\to\tau_n X(k)$.
\end{definition}

\begin{remark}
\begin{itemize}
\item[(a)]
It is not always the case that the Weil restriction, and hence the prolongation, exists.
However, if $X$ is such that every finite set of points in $X$ is contained in an affine open subscheme, then $\tau_n X$ does exist.
In particular prolongations of quasi-projective schemes always exist.
For more details on Weil restrictions see Section~2 of~\cite{paperA}.
\item[(b)]
Definition~\ref{defprolong} is just the definition of an abstract prolongation (Definition~4.1 of~\cite{paperA}), specialised to the finite free $\mathbb{S}$-algebra schemes $\D_n$.
It follows from the work in that paper that $\tau_n$ is a covariant functor which preserves \'etale morphisms, smooth embeddings, and closed embeddings (cf. Proposition~4.6 of~\cite{paperA}).
\item[(c)]
The nabla map is only defined on $k$-points, or more generally on points lying in $\underline\D$-ring extension of $(k,E)$, and not in arbitrary $k$-algebras.
It is a ``$\underline\D$-algebraic'' map.
\end{itemize}
\end{remark}

\begin{example}
\label{prolongdifferential}
In our main examples the prolongation spaces specialise to the expected objects.
So for pure rings (when $\D_n=\mathbb{S}$) we get $\tau_n X=X$.
For rings equipped with endomorphisms $\sigma_i$ (this is Example~\ref{difference} of the Appendix, when $\underline\D=\operatorname{End}$),
$\tau_nX=X\times X^{\sigma_1}\times\cdots\times X^{\sigma_n}$, and $\nabla_n(x)=\big(x,\sigma_1(x),\dots,\sigma_n(x)\big)$.
In the Hasse-Schmidt differential case of Example~\ref{differential}, the $\tau_n X$ and $\nabla_n$ are the usual differential prolongations with their differential sections.
For example, if $(k,\delta)$ is an ordinary differential field of characteristic zero, then $\nabla_n(x)=\big(x,\delta(x),\dots,\frac{\delta^n(x)}{n!}\big)$.
See Example~4.2 of~\cite{paperA} for more details on these particular cases.
It is also worth pointing out that if we take $\delta=0$ then $\tau_n X$ is the $n$th {\em arc space} of $X$, $\operatorname{Arc}_n X$, and $\nabla_n$ is the zero section.
See~\cite{denefloeser01} for a survey on arc spaces.
(They are the higher tangent bundles; $\operatorname{Arc}_1 X$ is the tangent bundle of $X$.)
\end{example}

For $m\geq n$, the morphisms $\pi_{m,n}:\D_m\to\D_n$ induce morphisms $\hat\pi_{m,n}:\tau_m X\to\tau_n X$.
Indeed, since $k$ is a $\underline\D$-ring, we have that $\pi^k_{m,n}:\D_{m}^{E_{m}}(k)\to\D_n^{E_n}(k)$ is a $k$-algebra homomorphism, and so, for any fixed $k$-algebra $R$, so is the corresponding $\pi^R_{m,n}:\D_{m}^{E_{m}}(R)\to\D_n^{E_n}(R)$.
Now on $R$-points, using the identification~(\ref{prolongpoint}) above, $\hat\pi_{m,n}$ is just the map induced by $\pi^R_{m,n}$.
See section~4.1 of~\cite{paperA} for more details on the morphism between prolongations induced by a morphism of finite free $\mathbb{S}$-algebra schemes.
Setting $m=0$ we see that the $n$th prolongation obtains the structure of a scheme over $X$; namely, $\hat\pi_{n,0}:\tau_n X\to X$.

\begin{lemma}
\label{nablaprop}
Suppose $X$ is a scheme over $k$.
For each $n<\omega$, $\nabla_n$ is a section to $\hat\pi_{n,0}^k:\tau_n X(k)\to X(k)$ and satisfies $\hat\pi_{n+1,n}\circ \nabla_{n+1}=\nabla_n$.
\end{lemma}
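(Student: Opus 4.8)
The plan is to unwind the functor-of-points descriptions of $\nabla_n$ and $\hat\pi_{m,n}$ recalled just before the statement and then invoke the two defining axioms of a $\underline\D$-ring. Under the canonical identification $\tau_n X(R)=X\big(\D_n^{E_n}(R)\big)$ of~\eqref{prolongpoint}, the map $\nabla_n\colon X(k)\to\tau_n X(k)$ is, by Definition~\ref{nabla-def}, the map $X(k)\to X\big(\D_n^{E_n}(k)\big)$ induced by the structure homomorphism $E_n\colon k\to\D_n^{E_n}(k)$; and $\hat\pi_{m,n}$ on $R$-points is the map $X\big(\D_m^{E_m}(R)\big)\to X\big(\D_n^{E_n}(R)\big)$ induced by the $k$-algebra homomorphism $\pi^R_{m,n}\colon\D_m^{E_m}(R)\to\D_n^{E_n}(R)$. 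So the whole argument is the compatibility of these induced maps with composition together with the relations $E_0=\id$ and $\pi_{m,n}\circ E_m=E_n$.

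First I would treat $\hat\pi_{n,0}$. Since $\D_0=\mathbb S$ and $E_0=\id$ by Definition~\ref{dring}(i), we have $\D_0^{E_0}(R)=R$, hence $\tau_0 X=X$, and $\hat\pi_{n,0}$ on points is induced by $\pi^R_{n,0}\colon\D_n^{E_n}(R)\to R$. Composing, $\hat\pi_{n,0}^k\circ\nabla_n$ is the self-map of $X(k)$ induced by $\pi^k_{n,0}\circ E_n\colon k\to k$, which by the commuting triangle of Definition~\ref{dring}(ii) (taking $m=n$ and target index $0$) equals $E_0=\id_k$. Therefore $\hat\pi_{n,0}^k\circ\nabla_n=\id_{X(k)}$, i.e.\ $\nabla_n$ is a section of $\hat\pi_{n,0}^k$. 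The compatibility $\hat\pi_{n+1,n}\circ\nabla_{n+1}=\nabla_n$ is the same computation: the composite is the map $X(k)\to X\big(\D_n^{E_n}(k)\big)$ induced by $\pi^k_{n+1,n}\circ E_{n+1}$, which by Definition~\ref{dring}(ii) (now with $m=n+1$ and target index $n$) is exactly $E_n$, hence equals $\nabla_n$.

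The only point requiring any care — and this is the mild obstacle — is checking that all of these identifications are genuinely functorial, so that ``the map on points induced by a ring homomorphism'' composes as expected; this rests on the naturality of~\eqref{prolongpoint} (Lemma~4.5 of~\cite{paperA}) and on the fact, recalled in the excerpt, that the morphisms $\hat\pi_{m,n}$ are \emph{defined} via $\pi^R_{m,n}$ on points. If one prefers to bypass naturality altogether, one can reduce to the affine case: since every scheme is covered by affine opens and the maps in question are morphisms of schemes, it suffices to verify the two identities on $k$-points through affine charts. Writing a chart as $\spec B$ with $B$ a $k$-algebra, a $k$-point is a $k$-algebra homomorphism $B\to k$, the point $\nabla_n$ sends it to is its composite with $E_n\colon k\to\D_n^{E_n}(k)$, and $\hat\pi_{m,n}$ is post-composition with $\pi^k_{m,n}$; both claims are then immediate from $\pi_{m,n}\circ E_m=E_n$ and $E_0=\id$ exactly as above.
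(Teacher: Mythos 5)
Your proof is correct and is exactly the unwinding the paper intends: the paper's own proof is simply ``Immediate from the definitions,'' and your argument spells out why, reducing both identities to the axioms $E_0=\id$ and $\pi_{m,n}\circ E_m=E_n$ of Definition~\ref{dring} via the identification $\tau_n X(R)=X\big(\D_n^{E_n}(R)\big)$. No discrepancy with the paper's approach.
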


\begin{proof}
Immediate from the definitions.
\end{proof}

\begin{proposition}
\label{liftinglemma}
Suppose $k$ is a field and $X$ is a variety (so reduced and of finite-type).
For all $m\geq n$, $\hat\pi_{m,n}:\tau_{m} X\to\tau_n X$ is a dominant morphism.
\end{proposition}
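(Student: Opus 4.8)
The plan is to reduce the statement to a question about extending $\underline{\D}$-ring structures along a dominant morphism, and then to exploit the characteristic property~\eqref{prolongpoint} of prolongations. First I would observe that dominance can be checked at the level of irreducible components, and that since $\tau_n X = \tau_n$ applied componentwise (the prolongation functor, being built from Weil restriction and base change, respects finite disjoint unions), it suffices to treat the case $X$ irreducible. Then, since $\hat\pi_{m,n} = \hat\pi_{n+1,n}\circ\cdots\circ\hat\pi_{m,m-1}$, it is enough to prove the claim for $m = n+1$. So the real content is: for $X$ an irreducible variety over the field $k$, the transition morphism $\hat\pi_{n+1,n}\colon\tau_{n+1}X\to\tau_n X$ is dominant.

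Next I would reduce to the affine case: dominance is local on the target, we may cover $\tau_n X$ by affine opens and pull them back, and prolongations of affine schemes are affine, so $X$ may be assumed affine, say $X=\spec B$ with $B$ a finitely generated $k$-algebra which is a domain. By Remark~\ref{adjoint}, the coordinate ring $k[\tau_n X]$ corepresents the functor $R\mapsto X(\D_n^{E_n}(R))$, i.e. there is a universal $k$-algebra homomorphism $B\to\D_n^{E_n}(k[\tau_n X])$, and similarly for $n+1$. To show $\hat\pi_{n+1,n}$ is dominant I need to show the induced ring map $k[\tau_n X]\to k[\tau_{n+1}X]$ is injective (on each irreducible component downstairs; equivalently, that no nonzero element of $k[\tau_n X]$ maps to $0$). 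The key point is that $\D_{n+1}\to\D_n$ is a \emph{surjective} ring scheme morphism with a section on the nose in one direction only — but what we actually have for free is that $k$ is a $\underline{\D}$-ring, hence $\pi_{n+1,n}^k\colon\D_{n+1}^{E_{n+1}}(k)\to\D_n^{E_n}(k)$ is a $k$-algebra map. The strategy: given a point $q\in\tau_n X(k)$, i.e. a $k$-algebra map $B\to\D_n^{E_n}(k)$, I want to lift it to $\tau_{n+1}X(k)$; but this need not be possible pointwise. Instead I would argue generically — take the generic point, work over the function field, and use smoothness.

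Here is the mechanism I would use. Let $K$ be the function field of $\tau_n X$ and consider the generic point $\eta\colon B\to\D_n^{E_n}(K)$ (composition of $B\to\D_n^{E_n}(k[\tau_n X])$ with $k[\tau_n X]\hookrightarrow K$). I want to factor, possibly after a further dominant base change, a lift $\widetilde\eta\colon B\to\D_{n+1}^{E_{n+1}}(K')$ with $\pi_{n+1,n}\circ\widetilde\eta = \eta$ over some field extension $K'/K$ that is the function field of a variety dominating $\tau_n X$ — because the existence of such a $\widetilde\eta$ over $K'$ says exactly that $\spec K'\to\tau_n X$ lifts to $\tau_{n+1}X$, forcing the image of $\hat\pi_{n+1,n}$ to contain a point lying over the generic point of $\tau_n X$, hence to be dense. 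Now $\D_{n+1}^{E_{n+1}}(K)\to\D_n^{E_n}(K)$ is a surjection of finite $K$-algebras, and as an $S$-algebra (i.e. ignoring the exponential twist) $\D_{n+1}$ is a finite \emph{free} extension of $\D_n$; the obstruction to lifting a $K$-point of $X$ through a surjection with nilpotent-type kernel is controlled by $B$ being formally smooth at the relevant prime. This is where the hypothesis that $X$ is a variety — reduced, and in particular generically smooth over the perfect-or-not field $k$ — is needed: on a dense open $U\subseteq X$ the map $U\to\spec k$ is smooth, lifts through nilpotent thickenings exist, and one pushes the obstruction into a field extension.

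The main obstacle, and the step I expect to require genuine care, is precisely the positive-characteristic, imperfect-residue-field bookkeeping in this last lifting step: the kernel of $\D_{n+1}^{E_{n+1}}(K)\to\D_n^{E_n}(K)$ is a square-zero (or at least nilpotent) ideal, so the lifting problem is a deformation problem governed by $\Omega_{B/k}$, but $\Omega_{B/k}$ may have torsion or the wrong rank if $X$ fails to be smooth at the generic point, and $k$ itself may be imperfect. I would handle this by replacing $k$ with its perfect closure is \emph{not} allowed (we must stay over $k$), so instead I would localise $X$ to its smooth locus — which is dense because $X$ is reduced and finite type, so generically smooth over $k$ after a finite purely inseparable base field extension if necessary — and then invoke the infinitesimal lifting criterion for smooth morphisms to produce $\widetilde\eta$ over a suitable function field. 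Once the generic lift exists, spreading out gives a dominant morphism from a variety into $\tau_{n+1}X$ over $\tau_n X$, and composing with $\hat\pi_{n+1,n}$ recovers a dominant morphism onto $\tau_n X$, completing the induction.
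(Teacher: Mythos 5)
There is a genuine gap, and it is exactly at the step you flag as ``the real content.'' You assert that the kernel of $\D_{n+1}^{E_{n+1}}(K')\to\D_n^{E_n}(K')$ is ``square-zero (or at least nilpotent),'' so that the lifting problem is a deformation problem governed by formal smoothness and $\Omega_{B/k}$. In the generality of this paper that is false. The transition maps $\pi_{m,n}:\D_m\to\D_n$ are only required to be surjective morphisms of finite free $\mathbb{S}$-algebra schemes; nothing forces their kernels to be nilpotent. The difference example makes this vivid: for $\underline\D=\operatorname{End}$ one has $\D_n=\mathbb{S}^{n+1}$, the kernel of $\D_{n+1}(K)\to\D_n(K)$ is the idempotent ideal $0\times\cdots\times 0\times K$, and $\hat\pi_{n+1,n}:\tau_{n+1}X\to\tau_nX$ is the projection $X\times X^{\sigma_1}\times\cdots\times X^{\sigma_{n+1}}\to X\times\cdots\times X^{\sigma_n}$. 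Here there is no infinitesimal lifting at all; the obstruction to lifting a point is the existence of a rational point on the new factor $X^{\sigma_{n+1}}$ over the relevant field, which formal smoothness cannot supply. So your argument, as written, proves the proposition only for systems whose transition kernels are nilpotent (the Hasse-Schmidt differential case), not in the stated generality.

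The paper's proof supplies precisely the missing ingredient. It works over $K=k^{\alg}$ from the outset (which also disposes of your imperfect-field worry about the smooth locus) and proves a lifting claim for an arbitrary surjection $\rho:R\to S$ of artinian $K$-algebras: decompose $R\cong(\prod_{i=1}^n A_i)\times C$ and $S\cong\prod_{i=1}^n B_i$ into local factors so that $\rho$ is given by local surjections $\rho_i:A_i\to B_i$ together with a factor $C$ that maps to nothing. The local kernels $\ker(\rho_i)$ \emph{are} nilpotent, and there your smoothness/infinitesimal-lifting mechanism applies verbatim; the factor $C$ is handled by algebraic closedness of $K$, which guarantees a point $Q_C\in X(C)$. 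Your outer scaffolding (reduction to irreducible components, generic-point/constructibility argument, avoidance of the singular locus $Y$ via $\tau_nY\subsetneq\tau_nX$) is sound and close in spirit to the paper's, but the proof cannot be completed without splitting off the non-nilpotent part of the kernel and producing points there by some means other than deformation theory --- either algebraic closedness as in the paper, or an explicit further field extension $K'$, which your setup permits but which you never actually invoke for this purpose.
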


\begin{proof}
This is well known in our main examples, including arc spaces (cf.~\cite{denefloeser01}), difference- and differential prolongations.
The  usual proofs in those cases extend to this setting, but we nevertheless give some details.

Let $K=k^{\alg}$ be the algebraic closure of $k$.
On $K$-points $\hat\pi_{m,n}$ is the map $X\big(\D_{m}^{E_{m}}(K)\big)\to X\big(\D_n^{E_n}(K)\big)$ induced by $\pi_{m,n}:\D_{m}^{E_{m}}(K)\to \D_n^{E_n}(K)$.
Hence the proposition will follow from the following general claim:

\begin{claim}
\label{liftart}
If $\rho:R \to S$ is a surjective map of artinian $K$-algebras and
$P \in X(S)$ is a smooth $S$-point of $X$, then there is an $R$-point $Q \in X(R)$ sent to $P$ by the map induced by $\rho$.
\end{claim}

\begin{proof}[Proof of Claim~\ref{liftart}]
First of all, we can decompose $R$ and $S$ as products of artinian $K$-algebras, $R \cong (\prod_{i=1}^n A_i) \times C$ and $S \cong \prod_{i=1}^n B_i$, where the $A_i$s and $B_i$s are local, and there exist local surjective homomorphisms $\rho_i:A_i\to B_i$, such that for all $x=(a_1,\dots,a_n,c)\in R$, $\rho(x)=(\rho_1(a_1),\dots,\rho_n(a_n)\big)$.
Now for each $i\leq n$, let $P_i\in X(B_i)$ be the image of $P$ under the map $X(S)\to X(B_i)$ induced by the projection $S\to B_i$.
Since $A_i$ is artinian and $\rho_i:A_i\to B_i$ is local, $\ker(\rho_i)$ is nilpotent.
Hence we can lift the smooth $B_i$-point $P_i$ of $X$, to a point $Q_i\in X(A_i)$.
Indeed, if $\ker(\rho_i)$ were square zero this would be the definition of smoothness; by induction it holds for nilpotent kernel also.
As $K$ is algebraically closed, we
can find $Q_C \in X(C)$.
Now, letting $Q \in X(R)$ be the point which projects to $Q_C \in X(C)$ and $Q_i \in X(A_i)$ for $i\leq n$, we get that $\rho$ maps $Q$ to $P$ as desired.
\end{proof}

We complete the proof of Proposition~\ref{liftinglemma}.
Using the functoriality of the prolongations, we may assume that $X$ is irreducible over $k$.
Now, by the claim, every smooth point of $X\big(\D_n^{E_n}(K)\big)$ is in the image of $X\big(\D_{m}^{E_{m}}(K)\big)\to X\big(\D_n^{E_n}(K)\big)$.
Let $Y$ be the proper $k$-closed subvariety of singular points  of $X$.
Then under the identification $X\big(\D_n^{E_n}(K)\big)=\tau_n X(K)$, the set $Y\big(\D_n^{E_n}(K)\big)$ is identified with $\tau_n Y(K)$, which is a proper $k$-closed subset of $\tau_n X(K)$.
Hence $\hat\pi_{m,n}:\tau_{m} X\to\tau_n X$ is dominant, as desired.
\end{proof}

We record the following fact from ~\cite{paperA} for later use:

\begin{fact}[Proposition~4.7(b) of~\cite{paperA}]
\label{nablafunctorialatpoints}
Suppose $f:X\to Y$ is a morphism of schemes over $k$ and $a\in Y(k)$.
Then $(\tau_n X)_{\nabla_n(a)}$, the fibre of $\tau_n(f):\tau_n X\to\tau_n Y$ over $\nabla_n(a)$, is $\tau_n(X_a)$.
\qed
\end{fact}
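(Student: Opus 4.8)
The plan is to verify the asserted identity on functors of points over $k$ and to conclude by Yoneda. Fix a $k$-algebra $R$ with structure morphism $\alpha\colon k\to R$. The characteristic property~(\ref{prolongpoint}) of prolongations supplies canonical identifications $\tau_n X(R)=X\big(\D_n^{E_n}(R)\big)$ and $\tau_n Y(R)=Y\big(\D_n^{E_n}(R)\big)$, under which---by the adjunction recorded in Remark~\ref{adjoint}---the morphism $\tau_n(f)$ acts on $R$-points as post-composition with $f$. The one thing that needs unwinding first is $\nabla_n(a)$: by Definition~\ref{nabla-def} it is the image of $a\in Y(k)$ under the map $Y(k)\to Y\big(\D_n^{E_n}(k)\big)$ induced by the $k$-algebra structure map $E_n\colon k\to\D_n^{E_n}(k)$. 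Since $k$ is a $\underline\D$-ring the transition $\D_n(\alpha)\colon\D_n^{E_n}(k)\to\D_n^{E_n}(R)$ is a morphism of $k$-algebras, and pushing $\nabla_n(a)$ forward along it shows that the image of $\nabla_n(a)$ in $\tau_n Y(R)=Y\big(\D_n^{E_n}(R)\big)$ is the canonical image $a_{\D_n^{E_n}(R)}$ of $a$ under $Y(k)\to Y\big(\D_n^{E_n}(R)\big)$ along the structure map of $\D_n^{E_n}(R)$.

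Granting this, I would compute both sides directly. The fibre $(\tau_n X)_{\nabla_n(a)}=\tau_n X\times_{\tau_n Y}\spec(k)$ has
$$(\tau_n X)_{\nabla_n(a)}(R)=\big\{\,x\in X\big(\D_n^{E_n}(R)\big)\ :\ f\circ x=a_{\D_n^{E_n}(R)}\,\big\}.$$
On the other hand $X_a=X\times_Y\spec(k)$, so $X_a(S)=\{x\in X(S):f\circ x=a_S\}$ for every $k$-algebra $S$; taking $S=\D_n^{E_n}(R)$ and applying~(\ref{prolongpoint}) to $X_a$ gives
$$\tau_n(X_a)(R)=X_a\big(\D_n^{E_n}(R)\big)=\big\{\,x\in X\big(\D_n^{E_n}(R)\big)\ :\ f\circ x=a_{\D_n^{E_n}(R)}\,\big\}.$$
These bijections are natural in $R$ and realise both sides as the same subfunctor of the functor of points of $\tau_n X$; as $\tau_n(X_a)\hookrightarrow\tau_n X$ (obtained from the closed embedding $X_a\hookrightarrow X$ by Proposition~4.6 of~\cite{paperA}) and $(\tau_n X)_{\nabla_n(a)}\hookrightarrow\tau_n X$ are both monomorphisms, Yoneda identifies them as the same closed subscheme of $\tau_n X$.

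One should also note that $\tau_n(X_a)$ exists: since $a\colon\spec(k)\to Y$ is a section of the separated structure morphism $Y\to\spec(k)$ it is a closed immersion, hence so is the base change $X_a\hookrightarrow X$, and a closed subscheme of a scheme whose finite subsets lie in affine opens has the same property, so the defining Weil restriction exists whenever $\tau_n X$ does. The main point requiring care---modest as the obstacle is---is keeping straight the exponential versus standard $k$-algebra structures on the various rings $\D_n(-)$, together with the fact, built into the construction of the canonical morphism $r^X_n$ following Definition~\ref{defprolong}, that $r^X_n$ is not a morphism over $k$ in the naive sense; once that is accounted for, the whole argument is a formal manipulation of the prolongation adjunction.
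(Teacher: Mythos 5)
Your argument is correct. Note, however, that the paper itself offers no proof of this statement: it is imported verbatim as Proposition~4.7(b) of~\cite{paperA}, which is why it appears as a ``Fact'' with an immediate \qed. So there is nothing in the present text to compare against line by line; what you have written is the natural self-contained verification, and it is the argument one would expect the companion paper to give. Concretely: you test both sides on an arbitrary $k$-algebra $R$, use the characteristic property~(\ref{prolongpoint}) to rewrite $\tau_n X(R)$ and $\tau_n Y(R)$ as $X\big(\D_n^{E_n}(R)\big)$ and $Y\big(\D_n^{E_n}(R)\big)$, identify the image of $\nabla_n(a)$ in $\tau_n Y(R)$ with the canonical image $a_{\D_n^{E_n}(R)}$, and observe that both fibre descriptions become the same subfunctor of the functor of points of $\tau_n X$; Yoneda then finishes. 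The two points that genuinely require care---that the base change $\D_n^{E_n}(k)\to\D_n^{E_n}(R)$ is a morphism of \emph{exponential} $k$-algebras (so that $\nabla_n(a)$ pushes forward to $a_{\D_n^{E_n}(R)}$), and that the two identifications are compatible with the respective monomorphisms into $\tau_n X$---are both addressed. Your remark on the existence of $\tau_n(X_a)$ (via $a$ being a closed immersion, since all schemes here are separated) is a worthwhile addition that the paper leaves implicit.
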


\subsection{Iterativity}
\label{subsect-iterativity}
As explained in Section~4.2 of~\cite{paperA} we can compose finite free $\mathbb{S}$-algebra schemes.
Specialising to Hasse-Schmidt systems, for all $m,n\in\mathbb{N}$ we get finite free $\mathbb{S}$-algebra schemes $\D_{(m,n)}:=\D_m\D_n$.
So for any $A$-algebra $R$, $\D_{(m,n)}(R)=\D_m\big(\D_n(R)\big)$ where the $R$-algebra structure is given by
$$\xymatrix{
R\ar[rr]^{s^R_n}&&\D_n(R)\ar[rr]^{s^{\D_n(R)}_m}&&\D_m\big(\D_n(R)\big).
}$$
By Remark~4.10 of~\cite{paperA} we know that $\D_{(m,n)}=\D_m\D_n$ is canonically isomorphic to $\D_m\otimes_{\mathbb S}\D_n$.
There are also the $A$-algebra homomorphisms $E_{(m,n)}:k\to\D_{(m,n)}(k)$, given by
$$\xymatrix{
k\ar[rr]^{E_m}&&\D_m(k)\ar[rr]^{\D_m(E_n)}&&\D_m\big(\D_n(k)\big).
}$$
What Proposition~4.12 of~\cite{paperA} tells us is that
$\tau_n(\tau_m X)=\tau(X,\D_{(m,n)},E_{(m,n)})$ and
$\nabla_n\circ\nabla_m=\nabla_{\D_{(m,n)},E_{(m,n)}}$.
Note that in this context,
for $m'\leq m$ and $n'\leq n$, we have the ring scheme morphisms $\pi_{(m,n),(m',n')}:\D_{(m,n)}\to\D_{(m',n')}$ given by the composition
$$\xymatrix{\D_m\big(\D_n(R)\big)\ar[rr]^{\D_m(\pi_{n,n'}^R)} && \D_m\big(\D_{n'}(R)\big)\ar[rr]^{\pi_{m,m'}^{\D_{n'}(R)}} && \D_m'\big(\D_{n'}(R)\big).}$$

It is a matter of fact that all the examples of Hasse-Schmidt rings corresponding to the various Hasse-Schmidt systems that we are particularly interested in satisfy some further relations not implied by the definition of being a Hasse-Schmidt ring.
These further relations can be viewed as certain iterativity conditions relating $E_{(m,n)}$ with $E_{m+n}$.
We formalise this as follows.

\begin{definition}
\label{iterative-hasse-system}
An {\em iterative} Hasse-Schmidt system is a Hasse-Schmidt system $\underline\D$ together with a sequence of closed embeddings of ring schemes
$$\Delta=\big(\Delta_{(m,n)}:\D_{m+n}\to\D_{(m,n)}\big)_{m,n\in\mathbb{N}}$$
such that:
\begin{itemize}
\item[(a)]
$\Delta$ is compatible with $\pi$.
That is, for all $m'\leq m$ and $n'\leq n$, the following diagram commutes:
$$\xymatrix{
\D_{m+n}\ar[d]_{\pi_{m+n,m'+n'}}\ar[rr]^{\Delta_{(m,n)}} && \D_{(m,n)}\ar[d]^{\pi_{(m,n),(m',n')}}\\
\D_{m'+n'}\ar[rr]^{\Delta_{(m',n')}} &&\D_{(m',n')}
}$$
\item[(b)]
$\Delta$ is associative in the sense that
for all $\ell,m,n$, and any $A$-algebra $R$,
$$\xymatrix{
\D_\ell\big(\D_{m+n}(R)\big)\ar[rr]^{\D_\ell(\Delta^R_{(m,n)})}&&\D_\ell\big(\D_m(\D_n(R))\big)\\
\D_{\ell+m+n}(R)\ar[u]^{\Delta^R_{(\ell,m+n)}}\ar[rr]^{\Delta^R_{(\ell+m,n)}}&&\D_{\ell+m}\big(\D_n(R)\big)\ar[u]_{\Delta_{(\ell,m)}^{\D_n(R)}}
}$$
commutes.
\item[(c)]
$\Delta_{(m,0)}=\Delta_{(0,n)}=\id$ for all $m,n\geq 0$.
\end{itemize}
We say that $(k,E)$ is an {\em iterative Hasse-Schmidt ring} (or more accurately $\Delta$-iterative) if it is a $\underline\D$-ring and
$$\xymatrix{
\D_{m+n}(k)\ar[rr]^{\Delta_{(m,n)}^k} && \D_m\left(\D_n(k)\right)\\
& k\ar[ul]^{E_{m+n}}\ar[ur]_{E_{(m,n)}}
}$$
commutes for all $m, n\in\mathbb{N}$.
That is, $\Delta_{(m,n)}^k:\D_{m+n}^{E_{m+n}}(k)\to\D_{(m,n)}^{E_{(m,n)}}(k)$ is a $k$-algebra map for all $m, n\in\mathbb{N}$.
\end{definition}

Our definition of an iterative
Hasse-Schmidt system was inspired by the presentation of the iteration rules
for higher derivations in~\cite{matsumura}.   Other authors have considered
similar (and in some cases even more general) notions.  If one were to
replace our system of ring functions by their projective limit
$\mathcal D_\infty := \varprojlim {\mathcal D}_n$ and the iteration
system by a single natural transformation $\Delta:\mathcal D_\infty \to
\mathcal D_\infty \circ \mathcal D_\infty$, then our axioms may be
read as saying that $\mathcal D_\infty$ is a comonad on the category of
$k$-algebras and an iterative Hasse-Schmidt ring would be an Eilenberg-Moore
$\mathcal D_\infty$-coalgebra.    This point of view is taken, for
example, in the work of Borger and Wieland on plethories~\cite{borgerweiland}
and of Keigher~\cite{keigher} in the
study of algebraic ${\mathcal D}$-modules.  While there are some conceptual
simplifications to be gained by passing to the inverse limit and using the
theory of (co)monads, we have consciously avoided this move, partly for the sake on concreteness, and partly because for us it is
very important that the prolongation spaces associated to finite type
schemes be themselves of finite type.

\begin{remark}
\label{deltainduced}
For all schemes $X$ over a $\underline\D$-ring $(k,E)$,
the iteration maps induce morphisms
$\hat\Delta_{(m,n)}:\tau_{m+n} X\to\tau_n\tau_m X$
such that the following diagram commutes:
$$
\xymatrix{
\tau_{m+n} X(k)\ar[rr]^{\hat\Delta_{(m,n)}}&&\tau_n\tau_m X(k)\\
& X(k)\ar[ul]^{\nabla_{m+n}}\ar[ur]_{\nabla_n\circ\nabla_m}
}
$$
(cf. Propositions~4.8(a) and~4.12 of~\cite{paperA}).
Moreover, since the iteration maps are closed embeddings, these induced morphisms are also closed embeddings (cf. Proposition~4.8(c) of~\cite{paperA}).
\end{remark}

We will need the following lemma later:

\begin{lemma}
\label{deltamore}
Suppose $(\underline\D,\Delta)$ is an iterative Hasse-Schmidt system.
Then for all $m,n\in\mathbb{N}$, and all $A$-algebras $R$, the following diagram commutes:
$$
\xymatrix{
\D_m\big(\D_{n+1}(R)\big)\ar[rr]^{\D_m(\pi_{n+1,n}^R)} && \D_m\big(\D_n(R)\big)\\
\D_{m+n+1}(R)\ar[u]^{\Delta^R_{(m,n+1)}}\ar[rr]^{\Delta^R_{(m+1,n)}} && \D_{m+1}\big(\D_n(R)\big)\ar[u]_{\pi_m^{\D_n(R)}}
}$$
\end{lemma}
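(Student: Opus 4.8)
The plan is to derive this square from the axioms of an iterative Hasse-Schmidt system, principally the associativity axiom~(b) and the compatibility axiom~(a), specialised to the case where one of the indices is $1$. First I would invoke associativity~(b) with the substitution $\ell \rightsquigarrow m$, $m \rightsquigarrow 1$, $n \rightsquigarrow n$. This yields a commuting square relating $\Delta^R_{(m,1+n)}$, $\Delta^R_{(m+1,n)}$, $\D_m(\Delta^R_{(1,n)})$, and $\Delta^{\D_n(R)}_{(m,1)}$. The two vertical maps in the target square of the lemma will then be recovered by post-composing this associativity square with appropriate transition maps and using~(c) to kill the $\Delta_{(1,0)}$-type terms.

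More precisely, the key steps are as follows. The map $\pi_{n+1,n}^R : \D_{n+1}(R) \to \D_n(R)$ factors through $\D_1(\D_n(R))$ in a way controlled by the iteration maps: indeed, applying $\D_m(-)$ to the statement ``$\Delta^R_{(1,n)}$ is compatible with $\pi$'' (axiom~(a) with $m=1$, $m'=0$, so that $\Delta_{(0,n)} = \id$ by~(c) and $\pi_{1,0}$ collapses the $\D_1$ factor) shows that $\pi_{1,0}^{\D_n(R)} \circ \Delta^R_{(1,n)} = \pi_{n+1,n}^R$ after the identification $\D_{(0,n)} = \D_n$. Composing with $\D_m(-)$ gives $\D_m(\pi_{n+1,n}^R) = \D_m(\pi_{1,0}^{\D_n(R)}) \circ \D_m(\Delta^R_{(1,n)})$. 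Similarly, the right-hand vertical map $\pi_m^{\D_n(R)} : \D_{m+1}(\D_n(R)) \to \D_m(\D_n(R))$ should be read off from axiom~(a) applied inside $\D_n(R)$, or simply identified with $\D_m(\pi_{1,0}^{\D_n(R)})$ under the canonical isomorphism $\D_{m+1} \cong \D_m \D_1 \cong \D_m \otimes_{\mathbb S} \D_1$ (Remark~4.10 of~\cite{paperA}); here I would need to match the notation $\pi_m^{\D_n(R)}$ in the statement against $\pi_{m+1,m}$ composed through the $\D_m\D_1$ decomposition. Granting these two identifications, the lemma's square is obtained by taking the associativity square from step one and composing both the top edge $\D_m(\Delta^R_{(1,n)})$ and the right edge $\Delta^{\D_n(R)}_{(m,1)}$ with $\D_m(\pi_{1,0}^{\D_n(R)})$; axiom~(c), giving $\Delta_{(0,n)} = \id$, ensures the bottom-left corner of the associativity square is exactly $\Delta^R_{(m+1,n)}$ and $\Delta^R_{(m,n+1)}$ as written.

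I expect the main obstacle to be purely bookkeeping: correctly identifying the ``collapsing'' transition map $\pi_m^{\D_n(R)}$ appearing in the lemma with the image of $\pi_{1,0}$ under $\D_m$, and tracking the canonical isomorphisms $\D_{m+n+1} \cong \D_m \D_1 \D_n$ versus $\D_{m+1}\D_n$ versus $\D_m \D_{n+1}$ so that the various $\Delta$ maps line up on the nose rather than merely up to isomorphism. Once the compatibility axiom~(a) is correctly unwound for the pair $(1,n)$ — so that $\pi_{n+1,n}$ genuinely equals the composite $\pi_{1,0} \circ \Delta_{(1,n)}$ — and its $\D_m$-image is taken, the diagram chase is a two-square paste: the associativity pentagon (really a square here, since $\D_{(0,n)}=\D_n$) followed by a naturality square for $\D_m$ applied to $\pi_{1,0}$. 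No genuinely new input beyond Definition~\ref{iterative-hasse-system} and the composition formalism recalled at the start of Section~\ref{subsect-iterativity} should be required.
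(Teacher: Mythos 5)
Your proposal is correct and follows essentially the same route as the paper: associativity (b) with indices $(m,1,n)$, combined with two instances of compatibility (a) (for the pairs $(1,n),(0,n)$ and $(m,1),(m,0)$, the latter applied over the base ring $\D_n(R)$) together with (c), pasted into a three-square diagram chase. One caution: your fallback identification of $\pi_m^{\D_n(R)}$ with $\D_m\big(\pi_{1,0}^{\D_n(R)}\big)$ via a ``canonical isomorphism'' $\D_{m+1}\cong\D_m\D_1$ is not available --- $\Delta_{(m,1)}$ is only a closed embedding, not an isomorphism --- but your primary route (reading $\pi_m^{\D_n(R)}$ off axiom (a) applied to the ring $\D_n(R)$) is exactly what the paper does, so the argument stands.
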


\begin{proof}
This is a combination of the associativity of $\Delta$ together with its compatibility with $\pi$.
We will prove that the desired diagram commutes by proving that three other diagrams commute.
First of all,
\begin{eqnarray}
\label{deltamore-assoc}
\xymatrix{
\D_m\big(\D_{n+1}(R)\big)\ar[rr]^{\D_m(\Delta^R_{(1,n)})} && \D_m\big(\D_1\big(\D_n(R)\big)\big)\\
\D_{m+n+1}(R)\ar[u]^{\Delta^R_{(m,n+1)}}\ar[rr]^{\Delta^R_{(m+1,n)}}
 && \D_{m+1}\big(\D_n(R)\big)\ar[u]_{\Delta_{(m,1)}^{\D_n(R)}}
}
\end{eqnarray}
commutes as it is an instance of Definition~\ref{iterative-hasse-system}(b) (associativity).
Next, note that the following diagram is an instance of Definition~\ref{iterative-hasse-system}(a) with $(1,n)$ and $(0,n)$, using also~\ref{iterative-hasse-system}(c), and hence commutes:
\begin{eqnarray*}
\xymatrix{
\D_{n+1}(R)\ar[rr]^{\Delta^R_{(1,n)}}\ar[d]_{\pi_{n+1,n}^R} && \D_1\big(\D_n(R)\big)\ar[dll]^{\pi_0^{\D_n(R)}}\\
\D_n(R)
}
\end{eqnarray*}
Applying the functor $\D_m$ we get that
\begin{eqnarray}
\label{deltamore-pi1}
\xymatrix{
\D_m\big(\D_{n+1}(R)\big)\ar[rr]^{\D_m(\Delta^R_{(1,n)})}\ar[d]_{\D_m(\pi_{n+1,n}^R)} && 
\D_m\big(\D_1\big(\D_n(R)\big)\big)\ar[dll]^{\D_m\big(\pi_0^{\D_n(R)}\big)}\\
\D_m\big(\D_n(R)\big)
}
\end{eqnarray}
commutes.
Finally, the following is also an instance of Definition~\ref{iterative-hasse-system}(a) with $(m,1)$ and $(m,0)$ applied to the ring $\D_n(R)$, using also~\ref{iterative-hasse-system}(c)
\begin{eqnarray}
\label{deltamore-pi2}
\xymatrix{
&& \D_m\big(\D_1\big(\D_n(R)\big)\big)\ar[dll]_{\D_m\big(\pi_0^{\D_n(R)}\big)}\\
\D_m\big(\D_n(R)\big)
&& \D_{m+1}\big(\D_n(R)\big)\ar[u]_{\Delta_{(m,1)}^{\D_n(R)}}\ar[ll]^{\pi_m^{\D_n(R)}}
}
\end{eqnarray}
Putting the commuting diagrams~(\ref{deltamore-assoc}), (\ref{deltamore-pi1}), and~(\ref{deltamore-pi2}) together proves the lemma.
\end{proof}

We now point out that the Hasse-Schmidt system coming from our main example admits a natural iteration such that the corresponding iterative Hasse-Schmidt rings form exactly the intended class: rings equipped with commuting iterative Hasse-Schmidt derivations.
See the appendix for a discussion of iterativity for other examples.

Consider the Hasse-Schmidt system $\operatorname{HSD}_e$ from Example~\ref{differential}.
So, for $R$ any ring, $\D_n(R)=R[\eta_1,\dots,\eta_e]/(\eta_1,,\dots,\eta_e)^{n+1}$.
We define $\Delta$ so that for all $R$,
$\Delta_{(m,n)}^R$ from
$$\D_{m+n}(R)=R[\eta_1,\dots,\eta_e]/(\eta_1,,\dots,\eta_e)^{m+n+1}$$
to
$$\D_m\big(\D_n(R)\big)=R[\zeta_1,\dots,\zeta_e,\epsilon_1,\dots,\epsilon_e]/(\zeta_1,\dots,\zeta_e)^{n+1}(\epsilon_1,\dots,\epsilon_e)^{m+1}$$
is given by $\eta_i\mapsto(\zeta_i+\epsilon_i)$.

\begin{proposition}
\label{iterativedifferential}
The system $\Delta=(\Delta_{(m,n)}:m,n\in\mathbb{N})$, above, makes $\operatorname{HSD}_e$ into an iterative Hasse-Schmidt system.
The $\Delta$-iterative $\operatorname{HSD}_e$-rings in this case are exactly the rings equipped with $e$ commuting Hasse-Schmidt derivations satisfying the additional identities
$$D_aD_b=\left(\begin{array}{c}a+b\\b\end{array}\right)D_{a+b}$$
for all $a,b,\in\mathbb{N}$.
(Hasse-Schmidt derivations satisfying these identities are called {\em iterative} Hasse-Schmidt derivations.)
\end{proposition}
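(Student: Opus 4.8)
The plan is to verify the three axioms of Definition~\ref{iterative-hasse-system} for the pair $(\hsd_e,\Delta)$, and then to unwind what $\Delta$-iterativity says about the operators $\partial_\alpha$. For the first part I would begin by checking that $\eta_i\mapsto\zeta_i+\epsilon_i$ is well defined as a map $R[\eta]/(\eta)^{m+n+1}\to R[\zeta,\epsilon]/\big((\zeta)^{n+1}+(\epsilon)^{m+1}\big)$: a monomial $\eta^\alpha$ with $|\alpha|=m+n+1$ goes to $\sum_{\beta+\gamma=\alpha}\binom{\alpha}{\beta}\zeta^\beta\epsilon^\gamma$, and every term there has $|\beta|\geq n+1$ or $|\gamma|\geq m+1$, hence vanishes. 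So $\Delta_{(m,n)}$ is a morphism of ring schemes $\D_{m+n}\to\D_m\D_n$ (and, one checks, a closed embedding). Axiom~(c) is then immediate, since $\D_m\D_0$ and $\D_0\D_n$ are canonically $\D_m$ and $\D_n$, and setting the appropriate variables to zero makes $\Delta_{(m,0)}$ and $\Delta_{(0,n)}$ the identity. Axiom~(a), compatibility with $\pi$, is the remark that substituting $\zeta_i+\epsilon_i$ for $\eta_i$ commutes with truncating to lower order. The one axiom that calls for a calculation is associativity~(b): chasing $\eta^\alpha$ around the square, both composites $\D_{\ell+m+n}\to\D_\ell\D_m\D_n$ should be given, in the evident coordinates, by $\eta_i\mapsto$ (the sum of the three variables of $\D_\ell,\D_m,\D_n$), so the square commutes by associativity and commutativity of addition.

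For the second part, fix an $\hsd_e$-ring $(k,E)$ with $E_n(x)=\sum_{|\alpha|\leq n}\partial_\alpha(x)\eta^\alpha$ as in Example~\ref{differential}. Expanding the two legs of the iteration square gives on the one hand $E_{(m,n)}(x)=\D_m(E_n)\big(E_m(x)\big)=\sum_{|\beta|\leq m,\,|\gamma|\leq n}(\partial_\gamma\circ\partial_\beta)(x)\,\zeta^\gamma\epsilon^\beta$, and on the other $\Delta^k_{(m,n)}\big(E_{m+n}(x)\big)=\sum_{|\beta|\leq m,\,|\gamma|\leq n}\binom{\beta+\gamma}{\gamma}\partial_{\beta+\gamma}(x)\,\zeta^\gamma\epsilon^\beta$, the latter because $\eta^{\beta+\gamma}$ expands under $\eta_i\mapsto\zeta_i+\epsilon_i$ and only the $\zeta^\gamma\epsilon^\beta$-term survives the truncation. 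Comparing coefficients and letting $m,n$ grow, $(k,E)$ is $\Delta$-iterative if and only if $\partial_\gamma\circ\partial_\beta=\binom{\beta+\gamma}{\gamma}\partial_{\beta+\gamma}$ for all $\beta,\gamma\in\NN^e$, where $\binom{\beta+\gamma}{\gamma}:=\prod_{i=1}^e\binom{\beta_i+\gamma_i}{\gamma_i}$.

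It remains to match such families $(\partial_\alpha)$ with $e$ commuting iterative Hasse-Schmidt derivations. Given a $\Delta$-iterative $\hsd_e$-ring I would set $D_{i,n}:=\partial_{n\mathbf{e}_i}$, where $\mathbf{e}_i\in\NN^e$ is the $i$-th standard basis vector. Since $\beta+\gamma=n\mathbf{e}_i$ forces $\beta$ and $\gamma$ to be multiples of $\mathbf{e}_i$, the Leibniz rule for $\partial_{n\mathbf{e}_i}$ collapses to the Hasse-Schmidt Leibniz rule for $\mathbf{D}_i=(D_{i,0},D_{i,1},\dots)$; the identity above gives $D_{i,a}D_{i,b}=\binom{a+b}{b}D_{i,a+b}$, so each $\mathbf{D}_i$ is iterative; and for $i\neq j$ the relevant multinomial coefficient $\binom{a\mathbf{e}_i+b\mathbf{e}_j}{a\mathbf{e}_i}$ equals $1$, whence $D_{i,a}D_{j,b}=\partial_{a\mathbf{e}_i+b\mathbf{e}_j}=D_{j,b}D_{i,a}$. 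Peeling off coordinates one at a time --- again using that coefficients like $\binom{\alpha}{\alpha_1\mathbf{e}_1}$ are $1$ --- recovers $\partial_\alpha=D_{1,\alpha_1}\cdots D_{e,\alpha_e}$, so $E$ is exactly the structure attached to $(\mathbf{D}_1,\dots,\mathbf{D}_e)$ in Example~\ref{differential}. Conversely, from $e$ commuting iterative Hasse-Schmidt derivations, Example~\ref{differential} already produces an $\hsd_e$-ring via $\partial_\alpha:=D_{1,\alpha_1}\cdots D_{e,\alpha_e}$, and to check that it is $\Delta$-iterative one sorts $\partial_\gamma\circ\partial_\beta$ into $\prod_{i=1}^e(D_{i,\gamma_i}D_{i,\beta_i})$ using commutativity and applies the one-variable iteration rule coordinatewise. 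These two constructions are visibly mutually inverse.

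The routine parts are the degree count and the two diagram chases of the first step, and the coefficient comparison of the second. The main work lies in the last step, and really in the implication that the abstract iteration condition forces the $\partial_\alpha$ to decompose as an ordered product of operators coming from $e$ commuting one-variable iterative Hasse-Schmidt derivations --- something that fails for a general $\hsd_e$-ring, and is the real content of the proposition. The organising observation throughout is that the multi-index multinomial coefficient $\binom{\beta+\gamma}{\gamma}$ degenerates to $1$ precisely when $\beta$ and $\gamma$ are supported on disjoint coordinates; getting these normalisations right is the one thing to be careful about.
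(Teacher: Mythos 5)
Your proposal is correct and follows essentially the same route as the paper's proof: verify the axioms of an iterative system directly, show by coefficient comparison that $\Delta$-iterativity is equivalent to $\partial_\alpha\partial_\beta=\binom{\alpha+\beta}{\beta}\partial_{\alpha+\beta}$, and then translate back and forth to commuting iterative Hasse-Schmidt derivations. You merely supply more detail than the paper does (notably the explicit ``peeling off coordinates'' argument recovering $\partial_\alpha=D_{1,\alpha_1}\cdots D_{e,\alpha_e}$, which the paper leaves implicit), and your details check out.
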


\begin{proof}
We first observe that $(\operatorname{HSD}_e,\Delta)$ is an iterative system.
Indeed, $\Delta_{m,n}$ is a closed embedding of ring schemes, it is compatible with $\pi$, and it is associative (the latter is just the associativity of $+$), and $\Delta_{(m,0)}=\Delta_{(0,n)}=\id$.

Now suppose $(k,E)$ is an $\operatorname{HSD}_e$-ring.
For each $n$, write $\displaystyle E_n(x)=\sum_{\alpha\in\mathbb{N}^e,|\alpha|\leq n}\partial_{\alpha}(x)\eta^{\alpha}$.
Let $D_{i,n}:=\partial_{(0,\dots,n,\dots,0)}$, where here the multi-index has $n$ in the $i$th co-ordinate and $0$ everywhere else.
So ${\bf D}_1:=(D_{1,0}, D_{1,1},\dots),\dots,{\bf D}_e:=(D_{e,0},D_{e,1},\dots)$ form a sequence of $e$ Hasse-Schmidt derivations.
Now, writing out $\Delta_{(m,n)}\circ E_{m+n}$ using the binomial coefficients, we see that $\Delta$-iterativity in this case is equivalent to
\begin{eqnarray}
\label{iterpart}
\partial_{\alpha}\partial_{\beta}&=&\left(\begin{array}{c}\alpha+\beta\\ \beta\end{array}\right)\partial_{\alpha+\beta}
\end{eqnarray}
for all multi-indices $\alpha$ and $\beta$.
In particular it implies that each ${\bf D}_i$ is an iterative Hasse-Schmidt derivation and that they all commute (indeed all the $\partial_\alpha$ commute).
Conversely, suppose  $\partial_\alpha=D_{1,\alpha_1}\cdots D_{e,\alpha_e}$ for each $\alpha$, and ${\bf D}_1,\dots,{\bf D}_e$ form a sequence of $e$ iterative commuting Hasse-Schmidt derivations.
Then it is not hard to see that ~(\ref{iterpart}) holds and so $(k,E)$ is $\Delta$-iterative.
\end{proof}

\subsection{Jets and interpolation for Hasse-Schmidt prolongations}
\label{subsect-interp}
For a scheme $X$ over a ring $k$, by the {\em $n$th jet space of $X$}, denoted by $\jet^nX\to X$, we mean the linear space\footnote{The linear space associated to a coherent sheaf of $\mathcal O_X$-modules $\mathcal F$ is $\spec(\sym^*\mathcal F)$.
When $\mathcal F$ is locally free the linear space associated to $\mathcal F$ is {\em dual} to the vector bundle associated to $\mathcal F$.} associated to the (coherent) sheaf of $\O_X$-modules $\mathcal{I}/\mathcal{I}^{n+1}$, where $\mathcal{I}$ is the kernel of the map $\O_X\otimes_k\O_X\to\O_X$ given on sections by $f\otimes g\mapsto fg$.
This is a covariant functor, its action on morphisms $f:X\to Y$ being the natural one induced by $f^\sharp:f^{-1}\O_Y\to\O_X$.
More concretely, if $k$ is a field and $p\in X(k)$ then $\jet^nX_p(k)=\hom_k\big(\mathfrak{m}_{X,p}/\mathfrak{m}^{n+1}_{X,p},k\big)$, and $\jet^n(f)_p:\jet^nX_p\to\jet^nY_{f(p)}$ is given by precomposing with $f^\sharp_p:\mathfrak{m}_{Y,f(p)}/\mathfrak{m}^{n+1}_{Y,f(p)}\to \mathfrak{m}_{X,p}/\mathfrak{m}^{n+1}_{X,p}$.
For details we refer the reader to section~5 of~\cite{paperA}, which is dedicated to a review of the relevant properties of this functor.

Jet spaces serve to linearise algebraic varieties in the sense that they can be used to distinguish subvarieties of a given variety: 
Suppose $Z$ and $Z'$ are irreducible subvarieties of an algebraic variety $X$ over a field $k$, and $p\in Z(k)\cap Z'(k)$ with $\jet^n(Z)_p=\jet^n(Z')_p$ for all $n$, then $Z=Z'$.

One of the main purposes of our work in~\cite{paperA} was the introduction of a certain map between the jet space of an abstract prolongation and the prolongation of the jet space.
In fact, a prototype for this map already appears in the work of Pillay and Ziegler (cf. Section~5 of~\cite{pillayziegler03}). 
We now recall the interpolating map specialised to our current setting.
Fix an iterative Hasse-Schmidt system $\underline \D$, an iterative $\underline\D$-ring $(k,E)$, $m\geq 1$, and $n\geq 0$.
Coming from the Weil restriction of scalars functor there is a canonical morphism,
$r:\tau_n X\times_k\D_n(k)\to X\times_k\D_n^{E_n}(k)$, of schemes over $\D_n(k)$.
Applying the jet functor (which commutes with base change) we get
$$\jet^m(r):(\jet^m\tau_n X)\times_k\D_n(k)\to (\jet^mX)\times_k\D_n^{E_n}(k).$$
Applying the restriction of scalars from $\D_n(k)$ to $k$,
we get
$$\res_{\D_n(k)/k}\big(\jet^m(r)\big):\res_{\D_n(k)/k}\big(\jet^m\tau_n X\times_k\D_n(k)\big)\to\tau_n\jet^mX$$
a morphism of schemes over $k$.
Now for any scheme $Y$ over $k$ and any $k$-algebra $R$ there is a 
natural ``zero section'' $Y\to \res_{R/k}\big(Y\times_kR\big)$ coming from the identity morphism on $Y\times_kR$.
Applying this to $Y=\jet^m\tau_n X$ and $R=\D_n(k)$ gives us a $k$-morphism
$\jet^m\tau_n X\to \res_{\D_n(k)/k}\big(\jet^m\tau_n X\times_k\D_n(k)\big)$.
Composing this with $\res_{\D_n(k)/k}\big(\jet^m(r)\big)$ yields a morphism
$$\phi^X_{m,n}:\jet^m\tau_n X\to\tau_n\jet^mX$$
of schemes over $k$.
This is the {\em interpolating map} of section~6 of~\cite{paperA}.\footnote{In fact the interpolating map was defined differently in~\cite{paperA} where we instead describe its action on points, but it is a straightforward exercise to see that the above description is an equivalent characterisation.}
It is a morphism of linear spaces over $\tau_n X$ and it satisfies the following properties:

\begin{proposition}
\label{phi-properties-hasse}
\begin{itemize}
\item[(a)]
The interpolating map is compatible with $\pi$.
That is, for all $n\geq n'$, the following diagram commutes:
$$\xymatrix{
\jet^m\tau_n X\ar[d]^{\phi_{m,n}}\ar[rr]^{\jet^m(\hat\pi_{n,n'})}&&\jet^m\tau_{n'} X\ar[d]^{\phi_{m,n'}}\\
\tau_n\jet^mX\ar[rr]_{\hat\pi_{n,n'}^{\jet^mX}}&&\tau_{n'}\jet^mX
}$$
\item[(b)]
The interpolating map is compatible with $\Delta$ in the sense that for all $n,n'$ the following diagram commutes:
$$\xymatrix{
\jet^m\tau_{n+n'} X\ar[d]^{\phi_{m,n+n'}}\ar[rr]^{\jet^m(\hat\Delta_{(n,n')})}&&\jet^m\tau_{n'}\tau_n X\ar[d]^{\phi_{m,\D_{(n,n')},E_{(n,n')}}}\\
\tau_{n+n'}\jet^mX\ar[rr]_{\hat\Delta^{\jet^mX}_{(n,n')}}&&\tau_{n'}\tau_n\jet^mX
}$$
\item[(c)]
The following diagram commutes for all $n, n'$
$$\xymatrix{
\jet^m\tau_{n'}\tau_n X\ar[dd]_{\phi_{m,\D_{(n,n')},E_{(n,n')}}}\ar[dr]^{\phi_{m,n'}^{\tau_n X}} &\\
& \tau_{n'}\jet^m\tau_n X\ar[dl]^{\tau_{n'}(\phi_{m,n}^X)}\\
\tau_{n'}\tau_n\jet^mX
}$$
\end{itemize}
\end{proposition}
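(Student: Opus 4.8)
The plan is to prove all three parts by unwinding the scheme-theoretic construction of the interpolating map recalled just above. Recall that $\phi^X_{m,n}$ is the four-fold composite: the canonical Weil-restriction morphism $r^X_n$, then $\jet^m$, then restriction of scalars, then precomposition with a zero section. Each of the three diagrams will then reduce to the functoriality of these four operations together with the corresponding compatibilities of the canonical morphisms $r^X_n$ already established in~\cite{paperA}. In particular I will not use the point-wise description of $\phi$.

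Parts~(a) and~(b) I would handle uniformly, via the following lemma. If $\theta:\D\to\E$ is a morphism of finite free $\mathbb S$-algebra schemes with exponential structures $E^\D:k\to\D(k)$ and $E^\E:k\to\E(k)$ satisfying $\theta^k\circ E^\D=E^\E$ (equivalently, $\theta^k:\D^{E^\D}(k)\to\E^{E^\E}(k)$ is a $k$-algebra homomorphism), then, writing $\hat\theta:\tau_\D X\to\tau_\E X$ for the induced morphism on prolongations (Proposition~4.8 of~\cite{paperA}) and $\phi_{m,\D}$ for the interpolating map attached to $\D$, the square
$$\xymatrix{
\jet^m\tau_\D X\ar[d]_{\phi_{m,\D}}\ar[rr]^{\jet^m(\hat\theta)}&&\jet^m\tau_\E X\ar[d]^{\phi_{m,\E}}\\
\tau_\D\jet^m X\ar[rr]_{\hat\theta^{\jet^m X}}&&\tau_\E\jet^m X
}$$
commutes. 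Granting this, (a) is the case $\theta=\pi_{n,n'}$, where the hypothesis $\theta^k\circ E^\D=E^\E$ is exactly Definition~\ref{dring}(ii), and~(b) is the case $\theta=\Delta_{(n,n')}$, where the same hypothesis is now exactly the $\Delta$-iterativity of $(k,E)$ and $\hat\theta=\hat\Delta_{(n,n')}$ by Remark~\ref{deltainduced}. To prove the lemma I would begin from the compatibility of the canonical morphisms (essentially the defining property of $\hat\theta$ in~\cite{paperA}),
$$\xymatrix{
\tau_\D X\times_k\D(k)\ar[d]_{r^X_\D}\ar[rr]^{\hat\theta\times_k\theta^k}&&\tau_\E X\times_k\E(k)\ar[d]^{r^X_\E}\\
X\times_k\D^{E^\D}(k)\ar[rr]_{\id_X\times_k\theta^k}&&X\times_k\E^{E^\E}(k)
}$$
then apply $\jet^m$ (a functor that commutes with base change), then apply restriction of scalars along $\theta^k$ (functorial in the pair consisting of a ring homomorphism and a morphism of schemes over it), and finally precompose with the zero sections $\jet^m\tau_\D X\to\res_{\D(k)/k}(\jet^m\tau_\D X\times_k\D(k))$, checking at each stage that the relevant naturality square commutes. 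Every step is routine; the only work is keeping the base changes and restrictions of scalars in order.

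For~(c) the plan is the same, but the structural input is now the behaviour of prolongations under composition. By Proposition~4.12 of~\cite{paperA} the identification $\tau_{n'}\tau_n X=\tau(X,\D_{(n,n')},E_{(n,n')})$ is compatible with the canonical morphisms: $r^X_{\D_{(n,n')}}$ factors as $r^{\tau_n X}_{\D_{n'}}$ followed by an appropriate base change of $r^X_{\D_n}$. Feeding this factorisation through $\jet^m$, then through restriction of scalars, then through the zero sections, exactly as in the proof of the lemma, yields the identity $\phi_{m,\D_{(n,n')},E_{(n,n')}}=\tau_{n'}(\phi^X_{m,n})\circ\phi^{\tau_n X}_{m,n'}$, which is~(c).

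The main obstacle is organisational rather than conceptual: since $\phi$ is a composite of four operations living over a string of different base rings, each of the three diagrams unfolds into a large but tame chase, and the one genuinely substantive ingredient at each stage is the compatibility of the canonical Weil-restriction morphisms $r^X_n$ with $\pi$, with $\Delta$, and with composition of prolongations, which I would extract directly from~\cite{paperA} (Propositions~4.8 and~4.12) rather than re-prove. Should one prefer to avoid this bookkeeping, all three diagrams can instead be checked on $R$-points for arbitrary $k$-algebras $R$, using the point-wise characterisation of the interpolating map from~\cite{paperA} together with the explicit descriptions of $\hat\pi_{n,n'}$ and $\hat\Delta_{(n,n')}$ on points as the maps induced by $\pi^R_{n,n'}$ and $\Delta^R_{(n,n')}$; this replaces the diagram chase by a fairly mechanical computation.
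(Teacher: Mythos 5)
Your proposal is correct and follows essentially the same route as the paper: the authors dispose of (a) and (b) as two instances of a single functoriality statement for the interpolating map with respect to a morphism of finite free $\mathbb{S}$-algebra schemes compatible with the exponential structures (Proposition~6.4(c) of~\cite{paperA}, applied to $\alpha=\pi_{n,n'}$ and $\alpha=\Delta_{(n,n')}$ respectively), and of (c) as the composition statement Proposition~6.4(b) of~\cite{paperA}. The only difference is that the paper cites these facts from~\cite{paperA} outright, whereas you propose to re-derive them by unwinding the four-fold composite defining $\phi$; that derivation is sound but duplicates work already done in the reference.
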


\begin{proof}
Part~(a) is Proposition~6.4(c) of~\cite{paperA}, applied to $\alpha=\pi_{n,n'}$.
Part~(b) is Proposition~6.4(c) of~\cite{paperA}, applied to $\alpha=\Delta_{n,n'}$.
Part~(c) is Proposition~6.4(b) of~\cite{paperA}, applied to $(\mathcal{E},e)=(\D_n,E_n)$ and $(\mathcal{F},f)=(\D_{n'},E_{n'})$.
\end{proof}

Besides the above foundational properties of the interpolating map, the main observation from~\cite{paperA} is the following fact:

\begin{fact}[Corollary~6.8 of~\cite{paperA}]
\label{gensurjective}
Suppose $k$ is a field and $X$ is of finite-type.
If $p\in X(k)$ is smooth, then for all $m,n\in\mathbb{N}$, $\phi_{m,n}^X$ restricts to a surjective linear map between the fibres of $\jet^m\tau_n X$ and $\tau_n\jet^mX$ over $\nabla_n(p)\in\tau_n X(k)$.
\qed
\end{fact}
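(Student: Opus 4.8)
The plan is to reduce the statement to the case $X=\mathbb A^d$ by localising at the smooth point $p$, and then to compute the restriction of $\phi^{\mathbb A^d}_{m,n}$ to the fibre over the origin completely explicitly.

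First I would use that $p$ is smooth of dimension $d:=\dim_pX$ to find an affine open $U\ni p$ admitting an \'etale morphism $f:U\to\mathbb A^d$; replace $X$ by $U$ and set $q:=f(p)$. Since $\jet^m$, $\tau_n$ and $\nabla_n$ are functorial, with $\nabla_n$ natural and the linear-space projections natural, the morphism $f$ yields the naturality square for the interpolating map. On the fibres over $\nabla_n(p)$ and $\nabla_n(q)$ the vertical maps of that square are isomorphisms: on the source side $f^\sharp_p$ induces an isomorphism $\mathfrak m_{X,p}/\mathfrak m_{X,p}^{m+1}\cong\mathfrak m_{\mathbb A^d,q}/\mathfrak m_{\mathbb A^d,q}^{m+1}$ (\'etale plus trivial residue extension), and the same holds for $\tau_nf$ near $\nabla_n(p)$, so $\jet^m(\tau_nf)$ restricts to an isomorphism $\jet^m(\tau_nX)_{\nabla_n(p)}\cong\jet^m(\tau_n\mathbb A^d)_{\nabla_n(q)}$; on the target side, identifying both fibres via Fact~\ref{nablafunctorialatpoints} with $\tau_n(\jet^mX_p)$ and $\tau_n(\jet^m\mathbb A^d_q)$, the map is $\tau_n$ of an isomorphism. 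Hence $\phi^X_{m,n}$ is surjective on the fibre over $\nabla_n(p)$ if and only if $\phi^{\mathbb A^d}_{m,n}$ is surjective on the fibre over $\nabla_n(q)$; translating, I may take $q=0$, so $\nabla_n(q)=0$.

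Now I would unwind $\phi^{\mathbb A^d}_{m,n}=\res_{\D_n(k)/k}(\jet^m r)\circ z$. For affine space the canonical morphism $r:\tau_n\mathbb A^d\times_k\D_n(k)\to\mathbb A^d\times_k\D_n^{E_n}(k)$ is just the $\D_n(k)$-linear substitution $y_i\mapsto\sum_a y_{i,a}e_a$, where $\{e_0,\dots,e_{\ell_n-1}\}$ is the fixed basis of $\D_n$ and the $y_{i,a}$ are the coordinates on $\tau_n\mathbb A^d$. Consequently $\jet^m r$ restricted to the fibres over the origin is dual to the $k$-linear map $\mathfrak m_0/\mathfrak m_0^{m+1}\to(\mathfrak m_0/\mathfrak m_0^{m+1})\otimes_k\D_n(k)$ sending a monomial $y^\alpha$ to the degree-$\le m$ truncation of $\prod_i(\sum_a y_{i,a}e_a)^{\alpha_i}$. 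Since $\res_{\D_n(k)/k}$ is just the forgetful passage to $k$-modules and $z$ restricted to the fibre over $0$ is the inclusion $v\mapsto v\otimes 1$, this expresses $\phi^{\mathbb A^d}_{m,n}|_0\colon\jet^m(\tau_n\mathbb A^d)_0\to\tau_n(\jet^m\mathbb A^d_0)$ entirely in terms of the multinomial coefficients occurring in that expansion and the structure constants of $\D_n$. It then remains to verify that this explicit linear map is surjective.

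I expect the last step to be the main obstacle. In characteristic zero it is quick: organising the coordinates of $\jet^m(\tau_n\mathbb A^d)_0$ by the monomials they dualise, the ``weight'' components of $\phi|_0$ are supported on pairwise disjoint coordinate sets, so surjectivity reduces to the nonvanishing of individual multinomial coefficients together with the fact that $e_0=1$ provides all basis directions. In positive characteristic one must contend with vanishing multinomial coefficients and with the precise multiplicative structure of $\D_n$, and this is where the iterativity of $(\underline\D,\Delta)$ — or at least strong hypotheses on the transition maps $\pi_{m,n}$ — should be brought to bear. An alternative way to organise the argument, partly bypassing the combinatorics, is induction on $n$: the case $n=0$ is immediate since $\tau_0X=X$ and $\phi_{m,0}=\id$, and Proposition~\ref{phi-properties-hasse}(b),(c) together with the closed embeddings $\hat\Delta_{(n,1)}$ expresses $\phi_{m,n+1}$ through $\phi_{m,1}$ on the smooth scheme $\tau_nX$ and $\tau_1(\phi_{m,n})$ on $X$; one then needs that $\tau_1$ of a surjective linear map of vector spaces is again surjective (it is smooth, hence so is its Weil restriction), and that the square of linear spaces built from $\hat\Delta_{(n,1)}$ is exact enough to pull surjectivity back along the closed embeddings. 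Either route places the weight of the proof on the explicit affine computation and on controlling it in positive characteristic.
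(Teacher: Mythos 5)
This statement is not proved in the present paper: it is quoted as Corollary~6.8 of~\cite{paperA} and stamped \verb|\qed|, so the only thing to compare your argument against is the route one would take there, which is indeed the one you begin with. Your first two steps are correct: the reduction to $X=\mathbb A^d$ at the origin via an \'etale map at the smooth point $p$ (using that $\tau_n$ and $\jet^m$ preserve \'etale morphisms and induce isomorphisms on the relevant fibres, that $\tau_n(f)\circ\nabla_n=\nabla_n\circ f$, and the naturality of the interpolating map from Proposition~6.4(a) of~\cite{paperA}) is legitimate, and your unwinding of $\phi^{\mathbb A^d}_{m,n}$ on the fibre over $0$ as the map $\gamma\mapsto (\gamma\otimes_k\D_n(k))\circ r^\sharp$, with $r^\sharp(x^\alpha)=\prod_i\bigl(\sum_a y_{i,a}e_a\bigr)^{\alpha_i}$, is also correct. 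Writing $r^\sharp(x^\alpha)=\sum_A c_{\alpha,A}\,y^A$ with $c_{\alpha,A}\in\D_n(k)$ and noting, as you do, that each monomial $y^A$ occurs for exactly one $\alpha$, your reduction turns the Fact into the concrete assertion: \emph{for every $\alpha$ with $1\le|\alpha|\le m$, the $k$-span of $\{c_{\alpha,A}\}_A$ is all of $\D_n(k)$}. You stop exactly there. That assertion is not a routine check to be appended; it is the entire content of the statement, so the proof is missing its decisive step.

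Moreover, none of the completions you sketch closes this gap. In positive characteristic the assertion degenerates badly in exactly the way your own formula exposes: for $d=1$ and $|\alpha|=p=\operatorname{char}(k)$ one has $\bigl(\sum_a y_ae_a\bigr)^p=\sum_a y_a^p e_a^p$, so the coefficient set collapses to $\{e_a^p\}_a$, and for instance when $\D_n(k)=k[\eta]/(\eta^2)$ these are $\{1,0\}$ with $k$-span a proper subspace of $\D_n(k)$; so ``nonvanishing of individual multinomial coefficients'' cannot be the mechanism, and whatever rescues the statement must exploit more structure than your reduction retains. Iterativity cannot be that missing ingredient: the Fact is stated for an arbitrary Hasse--Schmidt system with no $\Delta$ in its hypotheses, and in any case $\Delta$ and the iterativity of $E$ constrain the exponential maps, whereas your fibre computation over the origin depends only on the chosen basis $\{e_a\}$ of $\D_n$ and not on $E$ at all. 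Finally, the inductive route via Proposition~\ref{phi-properties-hasse} does not go through as sketched: part~(c) does yield surjectivity of $\phi_{m,\D_{(n,n')},E_{(n,n')}}$ on fibres from the lower cases (granting that $\tau_{n'}$ of a fibrewise surjection of linear spaces is fibrewise surjective), but part~(b) only asserts $\hat\Delta^{\jet^mX}_{(n,n')}\circ\phi_{m,n+n'}=\phi_{m,\D_{(n,n')},E_{(n,n')}}\circ\jet^m(\hat\Delta_{(n,n')})$; to deduce surjectivity of $\phi_{m,n+n'}$ you would need every point of $\hat\Delta^{\jet^mX}_{(n,n')}$ applied to the target fibre to be hit by a point of the closed subscheme $\jet^m(\hat\Delta_{(n,n')})\bigl(\jet^m\tau_{n+n'}X\bigr)$, and surjectivity of $\phi_{m,\D_{(n,n')},E_{(n,n')}}$ onto the \emph{whole} fibre of $\tau_{n'}\tau_n\jet^mX$ gives no control over where the preimage of that closed subspace sits. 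So the proposal correctly sets the stage but leaves the theorem unproved at its core, and the directions indicated for finishing it point away from, rather than toward, an argument.
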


\section{Generalised Hasse-Schmidt subschemes}
\label{sec-hassescheme}

\noindent
Fix an iterative Hasse-Schmidt system $(\underline \D,\Delta)$ over $A$ and an iterative $\underline \D$-ring $(k,E)$.

It is possible to develop a theory of $\underline \D$-schemes in analogy with algebraic schemes starting with a theory of sheaves of $\underline \D$-rings.
This would generalise, for example, the approach taken by Kovacic in~\cite{kovacic02} and Benoist in~\cite{benoist08} in differential-algebraic geometry.
As their work exhibits, there are a number of subtle and interesting foundational problems that arise in doing so.
Moreover, for the $\underline\D$-jet space theory we wish to develop here, it seems essential that our $\underline\D$-schemes come equipped with a fixed embedding in an algebraic scheme.
So we will restrict ourselves to the following approach:
we fix an algebraic scheme $X$ over $k$ and introduce only a theory of Hasse-Schmidt subschemes of $X$.

\begin{definition}[Generalised Hasse-Schmidt subschemes]
\label{hassevariety}
Suppose $X$ is a scheme over $k$.
A {\em $\underline\D$-subscheme of $X$ over $k$} is a collection of closed subschemes over $k$,
$$\underline{Z}=\big(Z_n\subseteq\tau_n X:n\in\mathbb{N}\big)$$
such that:
\begin{itemize}
\item[(1)]
For all $n\in\mathbb{N}$, the structure morphism $\hat\pi_{n}: \tau_{n+1} X\to\tau_n X$ restricts to a morphism from $Z_{n+1}$ to $Z_n$.
\item[(2)]
For all $m\in\mathbb{N}$, the morphism $\hat\Delta_{(m,1)}:\tau_{m+1} X\to\tau(\tau_m X)$ induced by iterativity, restricts to a morphism from $Z_{m+1}$ to $\tau(Z_m)$.
\end{itemize}
By the {\em $k$-rational points of $\underline Z$} we mean the subset of $X(k)$ given by
$$\underline Z(k):=\big\{p\in X(k):\nabla_n(p)\in Z_n(k), \text{ for all }n\in\mathbb{N}\big\}$$
We will also utilise the following terminology:
\begin{itemize}
 \item
$\underline{Z}$ is {\em dominant} if each projection $Z_{n+1}\to Z_n$ is dominant.
\item
$\underline{Z}$ is {\em separable} if each projection $Z_{n+1}\to Z_n$ is separable.
\item
$\underline{Z}$ is {\em irreducible} if each $Z_n$ is irreducible.
\item
If $k$ is a field and each $Z_n$ is reduced and of finite-type over $k$, then we say that $\underline{Z}$ is a Hasse-Schmidt {\em subvariety} of $X$.
\end{itemize}
\end{definition}

Note that every closed subscheme $Y\subseteq X$ can be viewed as a dominant Hasse-Schmidt subscheme by considering $\underline Y:= \big(\tau_n Y:n\in\mathbb{N}\big)$; so that $\underline Y(k)=Y(k)$. (Domination is by Proposition~\ref{liftinglemma}.)

We now establish a few lemmas which clarify the definitions.

\begin{lemma}
\label{n=1enough}
Suppose $\underline Z$ is a $\underline\D$-subscheme of a scheme $X$ over $k$.
For all $m,n\in\mathbb{N}$, the morphism $\hat\Delta_{(m,n)}:\tau_{m+n} X\to\tau_n\tau_m X$ induced by iterativity, restricts to a morphism from $Z_{m+n}$ to $\tau_n(Z_m)$.
In particular, $Z_n\subseteq \tau_n(Z_0)$ for each $n\in\mathbb{N}$.
\end{lemma}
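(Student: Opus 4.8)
The plan is to reduce the general statement to the case $n=1$, which is part~(2) of the definition of a $\underline\D$-subscheme, and then bootstrap via the associativity of $\Delta$ (Definition~\ref{iterative-hasse-system}(b)) and the functoriality of the prolongation functor. The key geometric input is Remark~\ref{deltainduced}, which gives the closed embeddings $\hat\Delta_{(m,n)}:\tau_{m+n}X\to\tau_n\tau_m X$, together with the associativity relation these satisfy (inherited from Definition~\ref{iterative-hasse-system}(b) via Propositions~4.8 and~4.12 of~\cite{paperA}): for all $\ell,m,n$ the composite $\tau_{\ell+m+n}X\xrightarrow{\hat\Delta_{(\ell+m,n)}}\tau_n\tau_{\ell+m}X\xrightarrow{\tau_n(\hat\Delta_{(\ell,m)})}\tau_n\tau_m\tau_\ell X$ agrees with $\tau_{\ell+m+n}X\xrightarrow{\hat\Delta_{(\ell,m+n)}}\tau_{m+n}\tau_\ell X\xrightarrow{\hat\Delta_{(m,n)}^{\tau_\ell X}}\tau_n\tau_m\tau_\ell X$.

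First I would fix $m$ and induct on $n$. The base case $n=0$ is trivial since $\hat\Delta_{(m,0)}=\id$ and $\tau_0(Z_m)=Z_m$, and the case $n=1$ is exactly condition~(2) of Definition~\ref{hassevariety}. For the inductive step, assume $\hat\Delta_{(m,n)}$ restricts to a morphism $Z_{m+n}\to\tau_n(Z_m)$. Apply condition~(2) of the definition with $m+n$ in place of $m$: the morphism $\hat\Delta_{(m+n,1)}:\tau_{m+n+1}X\to\tau\tau_{m+n}X$ restricts to $Z_{m+n+1}\to\tau(Z_{m+n})$. Now apply the prolongation functor $\tau=\tau_1$ to the inductive hypothesis: $\tau(\hat\Delta_{(m,n)})$ restricts to a morphism $\tau(Z_{m+n})\to\tau\tau_n(Z_m)$ (here I use that $\tau$ is a covariant functor preserving closed embeddings, as recorded after Definition~\ref{defprolong} and in Remark~(b) following the nabla definition). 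Composing, $\tau(\hat\Delta_{(m,n)})\circ\hat\Delta_{(m+n,1)}$ restricts to a morphism $Z_{m+n+1}\to\tau\tau_n(Z_m)=\tau_n\tau_1(Z_m)\subseteq\tau_n\tau_1\tau_m X$ — wait, I must be careful with the order of composition of prolongations; the correct target is $\tau_{n+1}(Z_m)$ sitting inside $\tau_{n+1}\tau_m X$, and by the associativity relation above (with $\ell=m$, and the roles of $m,n$ there played by $n,1$) the composite $\tau(\hat\Delta_{(m,n)})\circ\hat\Delta_{(m+n,1)}$ is precisely $\hat\Delta_{(m,n+1)}$. Hence $\hat\Delta_{(m,n+1)}$ restricts to a morphism $Z_{m+n+1}\to\tau_{n+1}(Z_m)$, completing the induction. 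The final assertion $Z_n\subseteq\tau_n(Z_0)$ follows by taking $m=0$: then $\hat\Delta_{(0,n)}=\id:\tau_n X\to\tau_n X$ restricts to $Z_n\to\tau_n(Z_0)$, i.e.\ $Z_n\subseteq\tau_n(Z_0)$.

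The main obstacle I anticipate is purely bookkeeping: getting the nesting order of iterated prolongations $\tau_n\tau_m$ versus $\tau_m\tau_n$ and the indices on $\hat\Delta$ exactly right when one applies $\tau$ to $\hat\Delta_{(m,n)}$ and composes with $\hat\Delta_{(m+n,1)}$. One has to match this composite against $\hat\Delta_{(m,n+1)}$ using the right instance of associativity, and it is easy to transpose an index. There is no serious content beyond Definition~\ref{hassevariety}(2), the functoriality of $\tau$ (preservation of closed embeddings), and the associativity of $\Delta$ as transported to prolongations; once the indices are arranged correctly the argument is a short induction.
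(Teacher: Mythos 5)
Your overall strategy (induction on $n$, base case from $\hat\Delta_{(m,0)}=\id$, inductive step from Definition~\ref{hassevariety}(2) plus associativity of $\Delta$) is exactly the paper's, but there is a genuine error at the decisive step. You assert that the associativity relation gives
$\tau(\hat\Delta_{(m,n)})\circ\hat\Delta_{(m+n,1)}=\hat\Delta_{(m,n+1)}$. This cannot be right: the left-hand side lands in $\tau\tau_n\tau_m X$ while $\hat\Delta_{(m,n+1)}$ lands in $\tau_{n+1}\tau_m X$, and these are different spaces. What associativity (instantiated as in the paper) actually gives is
$$\hat\Delta_{(n,1)}^{\tau_m X}\circ\hat\Delta_{(m,n+1)}^X \;=\; \tau\big(\hat\Delta_{(m,n)}^X\big)\circ\hat\Delta_{(m+n,1)}^X,$$
i.e.\ your composite equals $\hat\Delta_{(m,n+1)}$ only \emph{after} post-composing with the closed embedding $\hat\Delta_{(n,1)}^{\tau_m X}:\tau_{n+1}\tau_m X\to\tau\tau_n\tau_m X$. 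You even flagged the worry (``wait, I must be careful with the order of composition'') but then resolved it by asserting the false identity.

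The consequence is a missing step, not merely a bookkeeping slip. From the correct identity, your chase only shows that $\hat\Delta_{(m,n+1)}$ sends $Z_{m+n+1}$ into $\big(\hat\Delta_{(n,1)}^{\tau_m X}\big)^{-1}\big(\tau\tau_n Z_m\big)$, and one must still argue that this preimage equals $\tau_{n+1}Z_m$. The paper does this using two further inputs: that $\hat\Delta_{(n,1)}^{\tau_m X}$ is a closed embedding (Remark~\ref{deltainduced}), and the functoriality of $\hat\Delta$ in the scheme argument, namely that $\hat\Delta_{(n,1)}^{\tau_m X}$ restricted to $\tau_{n+1}Z_m$ is $\hat\Delta_{(n,1)}^{Z_m}:\tau_{n+1}Z_m\to\tau\tau_n Z_m$ (Proposition~4.8(b) of~\cite{paperA}). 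Your proof as written never invokes either fact to ``peel off'' the outer embedding, so the inductive step is incomplete. Once you replace the false identity by the correct one and add the preimage argument, the proof coincides with the paper's.
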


\begin{proof}
Note that part~(2) of Definition~\ref{hassevariety} is just the $n=1$ case of this lemma.
The `in particular' clause follows from the $m=0$ case of the main clause using the fact that $\Delta_{(0,n)}=\id$.

We prove the lemma by induction on $n$, the case of $n=0$ being trivially true as $\hat\Delta_{(m,0)}=\id$.
Now, for any $n$, consider the following diagram which commutes by the associativity of $\Delta$ (part (b) of Definition~\ref{iterative-hasse-system}):
$$\xymatrix{
\tau_{m+n+1}X\ar[rr]^{\hat\Delta_{m,n+1}^X} \ar[d]_{\hat\Delta_{m+n,1}^X} && \tau_{n+1}\tau_mX\ar[d]^{\hat\Delta_{n,1}^{\tau_mX}}\\
\tau\tau_{m+n}X\ar[rr]^{\tau(\hat\Delta_{m,n}^X)} && \tau\tau_n\tau_mX
}$$
Let us chase $Z_{m+n+1}$ from the top left to the bottom right, counter-clockwise.
By part~(2) of Definition~\ref{hassevariety}, $\hat\Delta_{m+n,1}$ takes $Z_{m+n+1}$ to $\tau(Z_{m+n})$.
By the induction hypothesis, $\hat\Delta_{(m,n)}$ takes $Z_{m+n}$ to $\tau_n(Z_m)$.
Applying the functor $\tau$ we get that $\tau(\hat\Delta_{m,n})$ takes $\tau(Z_{m+n})$ to $\tau\tau_nZ_m$.
So, counter-clockwise, $Z_{m+n+1}$ gets sent to $\tau\tau_nZ_m$.
So, from the above diagram, we get that $\hat\Delta_{m,n+1}$ restricts to a morphism from $Z_{m+n+1}$ to $\big(\hat\Delta_{n,1}^{\tau_mX}\big)^{-1}(\tau\tau_nZ_m)$.
Now, as $\hat\Delta_{n,1}^{\tau_mX}$ is a closed embedding (Remark~\ref{deltainduced}), and $\hat\Delta_{n,1}^{\tau_mX}\upharpoonright \tau_{n+1}Z_m=\hat\Delta_{n,1}^{Z_m}:\tau_{n+1}Z_m\to\tau\tau_nZ_m$ (this is the functoriality of $\hat \Delta$, cf. Proposition~4.8(b) of~\cite{paperA}), we get that $\big(\hat\Delta_{n,1}^{\tau_mX}\big)^{-1}(\tau\tau_nZ_m)=\tau_{n+1}Z_m$.
So $\hat\Delta_{m,n+1}$ restricts to a morphism from $Z_{m+n+1}$ to $\tau_{n+1}Z_m$, as desired.
\end{proof}

Hasse-Schmidt subschemes, as we have defined them, are given by a {\em compatible} sequence of algebraic conditions on the prolongation spaces.
It might be more natural to consider arbitrary algebraic conditions.
The following lemma describes how to produce an iterative
$\underline{\mathcal D}$-subscheme from a given system of $\underline\D$-equations.

\begin{lemma}
\label{makehasse}
Suppose $X$ is a scheme over $k$ and $Y_n\subseteq \tau_n X$ is a sequence of closed subschemes.
Then there exists a dominant Hasse-Schmidt subscheme $\underline Z=(Z_n)$ such that for any $\underline\D$-ring $k'$ extending $k$,
$$\underline Z(k')=\{p\in X(k'):\nabla_n(p)\in Y_n(k'), n<\omega\}.$$
\end{lemma}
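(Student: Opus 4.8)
I would build $\underline{Z}$ from the $Y_n$ by intersecting with the pullbacks of all the later conditions along the iteration and transition maps, and then take the "dominant reduction" of each stage so that the projections become dominant while still cutting out the same points in $\underline{\mathcal D}$-ring extensions. The key point is that the conditions (1) and (2) of Definition~\ref{hassevariety} are automatically repaired by such a saturation, because of the compatibility of $\hat\pi$ and $\hat\Delta$ established in Lemma~\ref{n=1enough} and Remark~\ref{deltainduced}.

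\textbf{Step 1: Saturate under $\hat\pi$ and $\hat\Delta$.} For each $n$, define
$$
W_n \ :=\ \bigcap_{m\geq n}\ \hat\pi_{m,n}^{-1}\big(Y_m \cap \text{(pullbacks of the } \hat\Delta\text{-conditions)}\big),
$$
more precisely: set $W_n := \bigcap_{m\geq n} \hat\pi_{m,n}^{-1}\big(Y_m \cap \bigcap_{a+b=m}\hat\Delta_{(a,b)}^{-1}(\tau_b Y_a)\big)$, where each $\tau_b Y_a \subseteq \tau_b\tau_a X$ and we use Lemma~\ref{n=1enough} to make sense of the indexing. A priori this is a decreasing intersection of closed subschemes; since $\tau_n X$ is noetherian the intersection stabilises, so $W_n$ is a genuine closed subscheme of $\tau_n X$. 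By construction $\hat\pi_{n+1,n}$ maps $W_{n+1}$ into $W_n$ (this is a reindexing of the intersection) and $\hat\Delta_{(m,1)}$ maps $W_{m+1}$ into $\tau(W_m)$ (here one needs that $\tau$ commutes with the stabilised intersection, which holds because $\tau$ is a functor preserving closed embeddings and the intersection is finite after stabilisation, cf.~Fact~\ref{nablafunctorialatpoints}). One checks directly on $R$-points, using~(\ref{prolongpoint}) and the commuting triangles of Remark~\ref{deltainduced}, that for any $\underline{\mathcal D}$-ring $k'\supseteq k$ we have $p\in X(k')$ with $\nabla_n(p)\in W_n(k')$ for all $n$ iff $\nabla_n(p)\in Y_n(k')$ for all $n$: the point is that $\nabla_m(p) = \hat\pi_{m,n}$-lifts $\nabla_n(p)$ and $\hat\Delta_{(a,b)}(\nabla_{a+b}(p)) = \nabla_b(\nabla_a(p)) = \nabla_b$ of the point $\nabla_a(p)\in \tau_a X(k')$, so membership in the pulled-back conditions is automatic once $\nabla_m(p)\in Y_m(k')$.

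\textbf{Step 2: Pass to the dominant reduction.} The $W_n$ need not have dominant transition maps. Define $Z_n := \overline{\bigcup_{m\geq n}\hat\pi_{m,n}(W_m)}$, the closure of the image of $W_m$ in $\tau_n X$ for $m$ large; again this stabilises by noetherianity, so $Z_n$ is a well-defined closed subscheme, and $W_m \to Z_n$ is dominant for the stabilised $m$. Now $\hat\pi_{n+1,n}$ restricts $Z_{n+1}\to Z_n$ and this restriction is dominant: the image of $Z_{n+1}$ contains the image of $\hat\pi_{m,n}(W_m)$ for large $m$, which is dense in $Z_n$. For condition~(2): $\hat\Delta_{(m,1)}$ sends $W_{m'}$ into $\tau(W_{m'-1})$ (after composing with transition maps and using Step~1), hence sends a dense subset of $Z_{m+1}$ into $\tau(Z_m)$, and since $\tau(Z_m)$ is closed, $\hat\Delta_{(m,1)}$ restricts $Z_{m+1}\to\tau(Z_m)$. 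Finally, taking closures of images does not change $k'$-rational points for $\underline{\mathcal D}$-ring extensions $k'$: if $\nabla_n(p)\in Y_n(k')$ for all $n$ then $\nabla_n(p)\in W_n(k')\subseteq Z_n(k')$ by Step~1; conversely, a more careful argument is needed here since passing to the closure can add points — but one shows that if $\nabla_n(p)\in Z_n(k')$ for all $n$, then using that $\nabla_{m}(p)$ lifts $\nabla_n(p)$ through every $\hat\pi_{m,n}$ and that the $Z_n$ were built as dominant images, the tower of fibres over $\nabla_n(p)$ is a decreasing chain of nonempty closed subschemes, so the point already lies in each $W_n$, hence satisfies all the $Y_m$ conditions.

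\textbf{Main obstacle.} The delicate step is the last one in Step~2: verifying that replacing $W_n$ by its dominant reduction $Z_n$ does not enlarge the set of rational points over $\underline{\mathcal D}$-ring extensions. The inclusion $\underline Z(k')\supseteq\{p : \nabla_n(p)\in Y_n(k')\ \forall n\}$ is immediate, but the reverse inclusion requires showing that a $k'$-point $p$ with $\nabla_n(p)\in Z_n(k')$ for all $n$ in fact has $\nabla_n(p)\in W_n(k')$. This is where one must use the compatibility of $\nabla$ with $\hat\pi$ (Lemma~\ref{nablaprop}) in an essential way: the sequence $(\nabla_n(p))_n$ is a \emph{coherent} section of the tower, so it lies in $Z_n$ for all $n$ forces it, via the defining intersection of the $W_n$ and a diagram chase through Remark~\ref{deltainduced} and Proposition~\ref{phi-properties-hasse}(a), to satisfy each original constraint $Y_m$. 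I expect this to reduce to a clean statement of the form: a coherent section of the prolongation tower that lies in $Z_n$ for all $n$ automatically lies in $W_n$ for all $n$, which follows because $W_n$ and $Z_n$ agree on coherent sections by their very construction as (stabilised) pullbacks and pushforwards along the same maps.
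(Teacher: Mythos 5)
Your two-step strategy (saturate by pulling back conditions, then take closed images to force dominance) is a genuinely different route from the paper's, which instead considers the family $C$ of \emph{all} sequences of closed subschemes cutting out the same rational points in every $\underline\D$-ring extension, observes that $C$ is closed under intersections and hence has a least element $(Z_n)$, and then derives conditions (1), (2) and dominance purely from minimality: if any one of them failed, one could shrink a single $Z_m$ (to $\hat\pi_{m+1,m}^{-1}(Z_m)\cap Z_{m+1}$, to $\overline{\hat\pi_{m+1,m}(Z_{m+1})}$, or to $Z_{m+1}\cap\hat\Delta_{(m,1)}^{-1}(\tau(Z_m))$) and contradict minimality, using only Lemma~\ref{nablaprop} and Remark~\ref{deltainduced} to see membership in $C$ is preserved. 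That argument needs no noetherianity and no explicit description of the $Z_n$, which is what makes it work for an arbitrary scheme $X$ as in the statement.

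Your version, as written, has several genuine gaps. First, the defining formula for $W_n$ does not typecheck: $\hat\pi_{m,n}^{-1}$ of a subscheme of $\tau_nX$ lives in $\tau_mX$, so the displayed intersection over $m\geq n$ is an intersection of subschemes of different spaces; what you presumably intend is to pull the conditions at levels $m\leq n$ \emph{up} to level $n$. Second, you invoke noetherianity of $\tau_nX$ twice to stabilise infinite intersections, but the lemma assumes only that $X$ is a scheme over $k$, so this is not available (and without stabilisation the image of an infinite decreasing intersection need not be dense in the intersection of the closed images, which undermines the dominance claim in Step~2). Third, and most seriously, you have the containment between $W_n$ and $Z_n$ backwards: by construction $Z_n=\overline{\hat\pi_{M,n}(W_M)}\subseteq W_n\subseteq Y_n$, so the inclusion you call delicate ($\nabla_n(p)\in Z_n(k')$ for all $n$ implies the $Y_n$-conditions) is immediate, while the inclusion you call immediate is the one that needs an argument — namely that $\nabla_n(p)=\hat\pi_{M,n}(\nabla_M(p))$ with $\nabla_M(p)\in W_M(k')$ forces $\nabla_n(p)$ into the closed image $Z_n$; your stated justification ``$\nabla_n(p)\in W_n(k')\subseteq Z_n(k')$'' uses the false containment. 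The closing appeal to ``coherent sections'' therefore addresses a non-problem and does not constitute a proof of anything. Finally, in Step~2 you only establish that $\hat\Delta_{(m,1)}$ carries a dense subset of $Z_{m+1}$ into $\tau(W_m)$; upgrading the target to $\tau(Z_m)$ requires combining the compatibility of $\Delta$ with $\pi$ (Definition~\ref{iterative-hasse-system}(a)) with the functoriality of $\tau$ applied to $\hat\pi_{M-1,m}\upharpoonright W_{M-1}$, none of which appears in your argument. The strategy is salvageable in the finite-type case, but each of these points needs to be repaired.
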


\begin{proof}
Let $C$ be the set of all sequences of closed subschemes $W_n\subseteq \tau_n X$ such that for all $\underline\D$-ring extensions $k'$ of $k$,
$$\{p\in X(k'):\nabla_n(p)\in W_n(k'), n<\omega\}=\{p\in X(k'):\nabla_n(p)\in Y_n(k'), n<\omega\}.$$
Note that if $(W_n)$ and $(W_n')$ are in $C$ then so is $(W_n\cap W_n')$.
So $C$ has a least element $(Z_n)$.
We claim that $\underline Z:=(Z_n)$ is a dominant Hasse-Schmidt subscheme of $X$.

Fixing $m$ we show that $\hat\pi_{m+1,m}$ restricts to a map from $Z_{m+1}$ to $Z_m$.
Indeed let $(W_n)$ be defined by $W_n:=Z_n$ for $n\neq m+1$ and $W_{m+1}:=\hat\pi^{-1}_{m+1,m}(Z_m)\cap Z_{m+1}$.
Then,
since by Lemma~\ref{nablaprop} $\hat\pi\big(\nabla_{m+1}(p)\big)=\nabla_m(p)$,
$(W_n)$ is again in $C$ and $(W_n)\subseteq (Z_n)$.
By minimality we have $W_m=Z_m$.
So $\hat\pi_{m+1,m}$ restricts to a map from $Z_{m+1}$ to $Z_m$, as desired.

Next, we show that $\hat\pi_{m+1,m}$ restricts to a dominant map from $Z_{m+1}$ to $Z_m$.
Let $W_m:=\overline{\hat\pi_{m+1,m}(Z_{m+1})}$ and $W_n:=Z_n$ for all $n\neq m$.
Again Lemma~\ref{nablaprop} implies that $(W_n)$ is in $C$ and so by minimality $W_m=Z_m$, as desired.

Finally, fixing $m$ we need to show that $\hat\Delta_{(m,1)}$ restricts to a morphism from $Z_{m+1}$ to $\tau(Z_m)$.
Define $(W_n)$ so that $W_n=Z_n$ for each $n\neq m+1$ and $W_{m+1}:=Z_{m+1}\cap\hat\Delta_{(m,1)}^{-1}\big(\tau(Z_m)\big)$.
Since $\hat\Delta_{(m,1)}\big(\nabla_{m+1}(p)\big)=\nabla\big(\nabla_m(p)\big)$ by Remark~\ref{deltainduced}, $(W_n)$ is in $C$.
By minimality we get $W_{m+1}=Z_{m+1}$, which means that $\hat\Delta_{(m,1)}$ restricts to a morphism from $Z_{m+1}$ to $\tau(Z_m)$, as desired.
\end{proof}

\begin{example}[Kolchin closed sets]
\label{makehassedifferential}
Consider the iterative Hasse-Schmidt system $\operatorname{HSD}_e$ of Example~\ref{differential} and Proposition~\ref{iterativedifferential}.
If $(k,\partial_1,\dots,\partial_e)$ is a (partial) differential field of characteristic zero (viewed in the natural way as an iterative $\operatorname{HSD}_e$-field) then every system of differential-polynomial equations over $k$, in say $\ell$ differential variables, gives rise to a dominant $\operatorname{HSD}_e$-subscheme of $\mathbb{A}^\ell$ whose $k$-points are exactly the solutions to the system in $k^\ell$.
Indeed, such differential-polynomial equations correspond to algebraic conditions on the prolongation spaces -- and thus give rise to a system of closed subschemes $Y_n\subseteq\tau_n(\mathbb{A}^\ell)$.
Now apply Lemma~\ref{makehasse}.
So our $\underline{\mathcal D}$-subschemes generalise Kolchin closed sets from differential algebra.
\end{example}

Before moving on, let us briefly discuss the issue of irreducibility for Hasse-Schmidt subvarieties.
The definition we have given, namely that each $Z_n$ is irreducible, is rather strong.
For example, one cannot expect that every Hasse-Schmidt subvariety can be written as a finite union of irreducible Hasse-Schmidt subvarieties.
However, we do have the following:

\begin{lemma}
\label{preirred}
Suppose $k$ is a field and $X$ is a variety (so reduced and of finite-type) over $k$, and $\underline Z$ is a dominant Hasse-Schmidt subvariety of $X$ over $k$.
Then for each $N<\omega$ there exists finitely many dominant Hasse-Schmidt subvarieties $\underline Y^1,\dots,\underline Y^\ell\subseteq \underline Z$ such that
\begin{itemize}
\item
for any $\underline\D$-ring $k'$ extending $k$, $\underline Z(k')=\underline Y^1(k')\cup\dots\cup \underline Y^\ell(k')$, and
\item
for all $m\leq N$, $Y^i_m$ is $k$-irreducible for $i=1,\dots,\ell$.
\end{itemize}
\end{lemma}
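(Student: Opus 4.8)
The plan is to induct on $N$. For $N=0$ we simply decompose $Z_0$ into its finitely many $k$-irreducible components $Z_0 = C_1 \cup \dots \cup C_\ell$ and take $\underline Y^i$ to be the ``Hasse-Schmidt subvariety generated by $C_i$'' in the sense of a minimal element of the family considered in the proof of Lemma~\ref{makehasse} (applied with $Y_0 = C_i$ and $Y_n = Z_n$ for $n>0$, then intersecting everything with $\underline Z$). Dominance of each $\underline Y^i$ can be arranged exactly as in Lemma~\ref{makehasse}. The point-union property holds because every $p \in \underline Z(k')$ has $\nabla_0(p) \in Z_0(k')$, hence lies in some component $C_i$, and so $\nabla_0(p) \in Y^i_0(k')$; combined with $\nabla_n(p) \in Z_n(k') = Y^i_n$ for $n>0$ this puts $p \in \underline Y^i(k')$. (One must be slightly careful: the defining family must be taken relative to all $\underline\D$-ring extensions $k'$ of $k$, so that the minimal element really is a Hasse-Schmidt subscheme — this is precisely what Lemma~\ref{makehasse} delivers.)

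For the inductive step, suppose we have produced $\underline Y^1, \dots, \underline Y^\ell \subseteq \underline Z$ that are dominant, whose $k'$-points cover $\underline Z(k')$, and whose $Z_m$-analogues $Y^i_m$ are $k$-irreducible for all $m \leq N$. Fix one $\underline Y^i$ and look at $Y^i_{N+1} \subseteq \tau_{N+1}X$. Decompose it into its finitely many $k$-irreducible components $Y^i_{N+1} = D_1 \cup \dots \cup D_s$. For each $j$, apply Lemma~\ref{makehasse} (intersected with $\underline Y^i$) with the prescribed data $D_j$ in level $N+1$ and $Y^i_n$ in all other levels, and let $\underline Y^{i,j}$ be the resulting minimal dominant Hasse-Schmidt subscheme contained in $\underline Y^i$. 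Replacing the list $(\underline Y^i)$ by the refined list $(\underline Y^{i,j})$ and discarding any member that fails to be dominant onto lower levels — or rather, shrinking its lower levels to the closure of the image, which is still $k$-irreducible since it is the closure of the image of the irreducible $D_j$ — gives the desired family for $N+1$. The union-of-points property is inherited: any $p \in \underline Y^i(k')$ has $\nabla_{N+1}(p) \in Y^i_{N+1}(k')$, hence in some $D_j$, and then $p \in \underline Y^{i,j}(k')$.

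The main obstacle is verifying that in the refined family each relevant level $Y^{i,j}_m$ really is $k$-irreducible for \emph{all} $m \le N+1$ simultaneously, and that this is consistent with dominance. The subtlety is that when we impose ``level $N+1$ lies in $D_j$'' and then pass to the minimal Hasse-Schmidt subscheme, the lower levels $Y^{i,j}_m$ for $m \le N$ could in principle shrink below $Y^i_m$ and lose irreducibility; but this cannot happen, because $\hat\pi_{N+1,m}$ maps $D_j$ into $Y^i_m$, and $Y^{i,j}_m$ contains the closure of $\hat\pi_{N+1,m}(D_j)$ (by the argument of Lemma~\ref{makehasse} forcing dominance), which is $k$-irreducible; and by minimality together with the compatibility constraints (1), (2) of Definition~\ref{hassevariety}, $Y^{i,j}_m$ is squeezed between this irreducible closure and $Y^i_m$. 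Since constraint (1) forces $Y^{i,j}_m = \overline{\hat\pi_{m+1,m}(Y^{i,j}_{m+1})}$ after shrinking to guarantee dominance, a downward induction from level $N+1$ shows every $Y^{i,j}_m$ with $m \le N+1$ is the closure of the image of an irreducible set, hence $k$-irreducible. Levels above $N+1$ play no role in the conclusion, so no irreducibility is claimed or needed there; dominance at those levels is preserved by the same image-closure construction used throughout Lemma~\ref{makehasse}.
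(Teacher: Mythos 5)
Your argument has a genuine gap at its central step: the claim that the minimal Hasse-Schmidt subvariety obtained from Lemma~\ref{makehasse}, after constraining level $N+1$ to lie in an irreducible component $D_j$, has irreducible levels $\leq N+1$. The dominance-forcing in the proof of Lemma~\ref{makehasse} gives $Y^{i,j}_m=\overline{\hat\pi_{m+1,m}(Y^{i,j}_{m+1})}$, i.e.\ each level is the image-closure of the level \emph{above} it, not of $D_j$; and minimality can shrink $Y^{i,j}_{N+1}$ to a proper closed subscheme of $D_j$ (the constraints coming from levels $>N+1$ do exactly this), which need not be irreducible. So your downward induction has no irreducible starting point, and the assertion that ``$Y^{i,j}_m$ contains the closure of $\hat\pi_{N+1,m}(D_j)$'' is unjustified --- nothing in Lemma~\ref{makehasse} produces a lower bound of that form. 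The same problem already occurs in your base case $N=0$: the minimal $\underline Y^i$ with $Y^i_0\subseteq C_i$ satisfies $Y^i_0=\overline{\hat\pi_{1,0}(Y^i_1)}$ with $Y^i_1$ possibly reducible, so $Y^i_0$ may be a reducible proper closed subset of the component $C_i$.

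The paper avoids this by running a Noetherian induction on the closed set $Z_N$ itself (no induction on $N$ is needed): if $Z_N$ is $k$-irreducible one is already done, since dominance of each $Z_{m+1}\to Z_m$ makes every $Z_m$ with $m\leq N$ the closure of the image of the irreducible $Z_N$, hence irreducible; otherwise write $Z_N=U\cup V$ with $U,V$ proper closed, apply Lemma~\ref{makehasse} to the two modified sequences to get $\underline Z^U,\underline Z^V$ with $Z^U_N\subseteq U\subsetneq Z_N$ and $Z^V_N\subseteq V\subsetneq Z_N$, and recurse; Noetherianity guarantees termination even though a single splitting step may again produce a reducible $N$-th level. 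Your proof can be repaired by replacing ``decompose into irreducible components and apply Lemma~\ref{makehasse} once'' with this recursive splitting, at which point the induction on $N$ becomes superfluous.
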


\begin{proof}
The proof is by Noetherian induction on $Z_N$.
If $Z_N$ is $k$-irreducible then so are all the $Z_m$ for $m\leq N$ by dominance -- and hence we are done.
Suppose we can decompose $Z_N$ as a union of two proper $k$-closed sets, say $U$ and $V$.
Replacing $Z_N$ by $U$  and then by $V$ in the sequence $(Z_n)$, and then applying Lemma~\ref{makehasse} to the these two sequences, we get dominant Hasse-Schmidt subvarieties $\underline Z^U$ and $\underline Z^V$ over $k$ such that $\underline Z(k')=\underline Z^U(k')\cup\underline Z^V(k')$ for any $\underline\D$-ring extension $k'$ of $k$.
Now $Z^U_N\subseteq U\subsetneq Z_N$ and $Z^V_N\subseteq V\subsetneq Z_N$.
By induction there exists $\underline Y^1,\dots,\underline Y^\ell$ satisfying the lemma for $\underline Z^U$, and $\underline W^1,\dots,\underline W^s$ satisfying the lemma for $\underline Z^V$.
But then $\{\underline Y^i,\underline W^j:i=1,\dots,\ell,j=1,\dots,s\}$ works for $\underline Z$.
\end{proof}

Iterating the above construction, every dominant Hasse-Schmidt subvariety can be written as a union of $2^{\aleph_0}$-many $k$-irreducible Hasse-Schmidt subvarieties.

\subsection{Some $\mathcal D$-algebra}
\label{dalgebra}
While it is our intention to avoid developing the algebraic side of this theory in detail, we will now present the
``$\underline\D$-co-ordinate ring'' associated to an (affine) $\underline\D$-subscheme.

The $\underline\D$-co-ordinate ring that we will define will be an iterative $\underline\D$-ring {\em extension} of $(k,E)$; that is, a $k$-algebra $a:k\to R$ with an iterative $\underline\D$-ring structure $E'$ on $R$ such that the following diagram commutes:
$$\xymatrix{
R\ar[rr]^{E_n'} && \D_n(R)\\
k\ar[u]^{a}\ar[rr]^{E_n} &&\D_n(k)\ar[u]_{\D_n(a)}
}$$
for all $n\in\mathbb N$.
Equivalently, each $E'_n:R\to\D_n^{E_n}(R)$ will be a $k$-algebra homomorphism.
Given a $k$-algebra $R$, the following proposition explains how to recognise an iterative $\underline\D$-ring structure $E'$ on $R$ extending $(k,E)$, in terms of the morphism on spectra induced by $E'$.\footnote{We are grateful to the referee for suggesting this formulation, which substantially clarifies our original exposition.}

\begin{proposition}
\label{refsidea}
Suppose $R$ is a $k$-algebra, $Y:=\spec(R)$, and we are given a sequence of morphisms over $R$, $u=(u_n:Y\to\tau_n Y \ | \ n\in\mathbb N)$, satisfying the following properties:
\begin{itemize}
\item[(i)]
$u_0=\id$
\item[(ii)]
$u$ is compatible with $\pi$.
That is, for all $m\geq n\in\mathbb N$, the following commutes:
$$\xymatrix{
\tau_mY\ar[rr]^{\hat\pi_{m,n}} && \tau_nY\\
& Y\ar[ul]^{u_m}\ar[ur]_{u_n}
}$$
\item[(iii)]
$u$ is compatible with $\Delta$.
That is, for all $m,n\in\mathbb N$, the following commutes:
$$\xymatrix{
\tau_{m+n}Y\ar[rr]^{\hat\Delta_{(m,n)}} && \tau_n\tau_mY\\
& Y\ar[ul]^{u_{m+n}}\ar[ur]_{\tau_n(u_m)\circ u_n}
}$$
\end{itemize}
Note that $u_n\in\tau_n Y(R)$ for each $n\in \mathbb N$.
Let $\tilde u_n$ be the corresponding $\D_n^{E_n}(R)$-point of $Y$, and let $E_n':R\to\D_n^{E_n}(R)$ be the corresponding $k$-algebra homomorphism.
Then, $E':=(E_n' \ | \ n\in\mathbb N)$ makes $R$ into an iterative $\underline\D$-ring extension of $(k,E)$.
\end{proposition}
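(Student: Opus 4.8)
The plan is to unwind the bijection between the data $(u_n)$ and the data $(E'_n)$ and to translate each of the three hypotheses (i), (ii), (iii) on the $u_n$ directly into the required properties of the $E'_n$: namely, that each $E'_n$ is an $A$-algebra homomorphism lifting $E_n$, that $E'_0 = \id$, that the $E'_n$ are compatible with $\pi$, and that they satisfy the $\Delta$-iterativity diagram of Definition~\ref{iterative-hasse-system}. The key bookkeeping observation is the sequence of adjunctions recorded in Remark~\ref{adjoint}: for an affine scheme $Y = \spec(R)$, an element of $\tau_n Y(R)$ is the same thing as a $\D_n^{E_n}(R)$-point $\tilde u_n$ of $Y$, which is the same thing as a $k$-algebra homomorphism $E'_n : R \to \D_n^{E_n}(R)$ (equivalently, an $A$-algebra homomorphism $R \to \D_n(R)$ whose composite with the structure map $\D_n(a)\circ E_n : k \to \D_n(R)$ recovers the $k$-algebra structure). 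The hypothesis that $u_n$ is a morphism \emph{over} $R$ is exactly what forces $E'_n$ to be a section of $\D_n(k)\text{-point} \mapsto k\text{-point}$, i.e. to be a genuine $\underline\D$-ring-type operator on $R$ rather than an arbitrary ring map; and the fact that $u_n$ lies over $k$ via $\nabla_n$-compatible data is what makes $(R,E')$ an \emph{extension} of $(k,E)$.

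First I would set up notation carefully: since $Y$ is affine, all prolongations $\tau_n Y$, and iterated prolongations $\tau_n\tau_m Y = \tau(Y,\D_{(m,n)},E_{(m,n)})$, exist and are affine, so Remark~\ref{adjoint} applies throughout and every morphism of schemes below may be read off at the level of co-ordinate rings. Then I would check condition (i) of Definition~\ref{dring}: $u_0 = \id_Y \in \tau_0 Y(R) = Y(R)$ corresponds under the identification $\D_0 = \mathbb S$ to the identity map $R \to R$, so $E'_0 = \id$. For condition (ii) of Definition~\ref{dring}, I would chase the triangle in hypothesis (ii): the morphism $\hat\pi_{m,n} : \tau_m Y \to \tau_n Y$ on $R$-points is, by the discussion preceding Lemma~\ref{nablaprop}, induced by $\pi^R_{m,n} : \D_m^{E_m}(R) \to \D_n^{E_n}(R)$; so $\hat\pi_{m,n}\circ u_m = u_n$ says precisely that $\pi^R_{m,n}\circ E'_m = E'_n$, which is exactly the compatibility triangle of Definition~\ref{dring}(ii). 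This already shows $(R,E')$ is a $\underline\D$-ring; that it extends $(k,E)$ is built into the meaning of "$\D_n^{E_n}(R)$-point" — the $k$-algebra structure on $\D_n^{E_n}(R)$ used to form $\tilde u_n$ is the exponential one coming from $E_n$, so the defining square for a $\underline\D$-ring extension commutes by construction.

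The substantive step is iterativity, i.e. extracting the diagram
$$\xymatrix{
\D_{m+n}(R)\ar[rr]^{\Delta_{(m,n)}^R} && \D_m\big(\D_n(R)\big)\\
& R\ar[ul]^{E'_{m+n}}\ar[ur]_{E'_{(m,n)}}
}$$
from hypothesis (iii). Here I would invoke Proposition~4.12 of~\cite{paperA} (as recalled in Section~\ref{subsect-iterativity}): $\tau_n\tau_m Y = \tau(Y,\D_{(m,n)},E_{(m,n)})$, so an $R$-point of $\tau_n\tau_m Y$ corresponds to a $\D_{(m,n)}^{E_{(m,n)}}(R)$-point of $Y$, hence to a $k$-algebra map $E'_{(m,n)} : R \to \D_{(m,n)}^{E_{(m,n)}}(R)$; one checks (using $\nabla_n\circ\nabla_m = \nabla_{\D_{(m,n)},E_{(m,n)}}$ and compatibility (ii) already established) that this composite point $\tau_n(u_m)\circ u_n$ corresponds exactly to $\D_m(E'_n)\circ E'_m = E'_{(m,n)}$ in the sense of the composite defining $E_{(m,n)}$. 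On the other side, $\hat\Delta_{(m,n)} : \tau_{m+n}Y \to \tau_n\tau_m Y$ is, by Remark~\ref{deltainduced} and the construction of $\hat\Delta$ in Section~\ref{subsect-iterativity}, the morphism induced on $R$-points by $\Delta^R_{(m,n)} : \D_{m+n}^{E_{m+n}}(R) \to \D_{(m,n)}^{E_{(m,n)}}(R)$. Therefore the triangle $\hat\Delta_{(m,n)}\circ u_{m+n} = \tau_n(u_m)\circ u_n$ translates, under the adjunction, into $\Delta^R_{(m,n)}\circ E'_{m+n} = E'_{(m,n)}$, which is the iterativity condition. I expect this translation — keeping straight which $k$-algebra structure ("standard" versus "exponential") sits on each $\D_n(R)$, and verifying that the composite point $\tau_n(u_m)\circ u_n$ really does correspond to $\D_m(E'_n)\circ E'_m$ rather than some variant — to be the main obstacle; it is essentially the content of Proposition~4.12 of~\cite{paperA} together with the functoriality of $\nabla$ and $\hat\Delta$, and once those are cited correctly the argument is a diagram chase.
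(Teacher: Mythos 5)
Your proposal is correct and follows essentially the same route as the paper: conditions (i) and (ii) translate immediately into the $\underline\D$-ring axioms, and the whole weight falls on verifying that the $R$-point $\tau_n(u_m)\circ u_n$ of $\tau_n\tau_m Y$ corresponds, under the identification $\tau_n\tau_m Y=\tau(Y,\D_{(m,n)},E_{(m,n)})$, to the composite homomorphism $\D_m(E_n')\circ E_m'$. The one adjustment: the paper carries out that verification not via Proposition~4.12 of~\cite{paperA} and functoriality of $\nabla$ (the $u_n$ are not yet $\nabla$ maps of any $\underline\D$-structure), but via Lemma~4.14 of~\cite{paperA} on how the canonical morphisms $r_n^Y$ compose, together with functoriality of the prolongation functor.
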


\begin{proof}
First of all, let us recall that the identification $\tau_n Y(R)=Y\big(\D_n^{E_n}(R)\big)$ is by $p\mapsto r_n^Y\circ\big(p\times_k\D_n(k)\big)$, where $r_n^Y:\tau_nY\times_k\D_n(k)\to Y$ is the canonical morphism associated to a prolongation, viewed as a morphism over $k$ in the appropriate way.
See the discussion on page~\pageref{canonicalmap}.

The fact that $E_n'$ is a $k$-algebra homomorphism from $R$ to $\D_n^{E_n}(R)$ says exactly that it is an $A$-algebra homomorphism from $R$ to $\D_n(R)$ extending $E_n:k\to\D_n(k)$.
So it suffices to check that $(R,E')$ is an iterative $\underline\D$-ring.
That $(R,E')$ is a $\underline\D$-ring is more or less immediate from conditions~(i) and ~(ii) on $u$.
That condition~(iii) implies iterativity of $E'$ requires a little more work; namely, we need to show that the $\D_m^{E_m}\big(\D_n^{E_n}(R)\big)$-point of $Y$ corresponding to $\tau_n(u_m)\circ u_n$ agrees with $E_{(m,n)}'=E_m'\circ \D_m(E'_n):R\to\D_m\big(\D_n(R)\big)$.
In terms of $u$, what we need to prove is the commuting of the following diagram:
\begin{eqnarray}
\label{refsidea-iterate}
\xymatrix{
Y\times_k\D_m(k) \ar[rr]^{u_m\times_k\D_m(k)} &&
\tau_mY\times_k\D_m(k) \ar[drr]^{r^Y_m}\\
\tau_nY\times_k\D_m\D_n(k)\ \  \ar[rr]_{\tau_n(u_m)\times_k\D_m\D_n(k)} \ar[u]^{r^Y_n\times_k\D_m(k)} &&
\ \ \tau_n\tau_mY\times_k\D_m\D_n(k) \ar[rr]_{\ \ \ \ \ \ \ \ \ \ r^Y_{(n,m)}} &&
Y
}
\end{eqnarray}
Here, strictly speaking, $r_n^Y\times_k\D_m(k)$ is the isomorphism from $\tau_n Y\times_k\D_m\D_n(k)$ to $\tau_n Y\times_k\D_n(k)\times_k\D_m(k)$ followed by $r_n^Y$ base changed up from $k$ to $\D_m(k)$.

Now, Lemma~4.14 of~\cite{paperA} tells us how the canonical morphisms compose, allowing us to reduce the commuting of~(\ref{refsidea-iterate}) to that of
$$\xymatrix{
Y\times_k\D_m(k) \ar[rr]^{u_m\times_k\D_m(k)} &&
\tau_mY\times_k\D_m(k)\\
\tau_nY\times_k\D_m\D_n(k)\ \  \ar[rr]_{\tau_n(u_m)\times_k\D_m\D_n(k)} \ar[u]^{r^Y_n\times_k\D_m(k)} &&
\ \ \tau_n\tau_mY\times_k\D_m\D_n(k) \ar[u]_{r_n^{\tau_m Y}\times_k\D_m(k)}
}$$
Dropping the final base change to $\D_m(k)$ everywhere, we further reduce the problem to verifying the commutativity of
$$\xymatrix{
Y \ar[rr]^{u_m} &&
\tau_mY\\
\tau_nY\times_k\D_n(k)\ \  \ar[rr]_{\tau_n(u_m)\times_k\D_n(k)} \ar[u]^{r^Y_n} &&
\ \ \tau_n\tau_mY\times_k\D_n(k) \ar[u]_{r_n^{\tau_m Y}}
}$$
which is just the functoriality of the prolongations and the associated canonical morphisms.
\end{proof}

\begin{remark}
We pointed out in Remark~\ref{adjoint} that the functor on $k$-algebras that takes the co-ordinate ring of an affine $k$-scheme to the co-ordinate ring of its $n$th prolongation is left-adjoint to $\D_n^{E_n}$.
For any $k$-algebra $R$, the adjoint transpositions witnessing the above fact take $\hat\Delta^R_{(m,n)}$ to the $R$-dual of $\Delta_{(m,n)}^R$.
From this point of view, Proposition~\ref{refsidea} becomes the adjoint version of the definition of a $\Delta$-iterative $\underline\D$-ring (Definition~\ref{iterative-hasse-system}).
In particular, while we will not make use of it, the converse of Proposition~\ref{refsidea} is also true and similarly verified: all iterative $\underline\D$-ring structures on $R$ extending $(k,E)$ are obtained from such a sequence $u$.
\end{remark}

Suppose $X$ is an affine scheme over $(k,E)$.
Let $k\langle X\rangle$ denote the direct limit of the co-ordinate rings $k[\tau_mX]$ under the homomorphisms induced by $\hat \pi_{m+1,m}:\tau_{m+1}X\to\tau_mX$.
Proposition~\ref{refsidea} gives us a natural $\underline\D$-ring structure on $k\langle X\rangle$:
Let $Y:=\spec\big(k\langle X\rangle\big)$ and set $u_n:Y\to\tau_nY$ to be the inverse limit of the morphism $\hat\Delta_{(m,n)}:\tau_{m+n}X\to\tau_n\tau_m X$ as $m$ goes to infinity.
The defining properties of $\Delta$ ensure that the hypotheses of Proposition~\ref{refsidea} are satisfied by the sequence $u=(u_n \ | \ n\in\mathbb N)$.
The main point is that $\Delta$ is compatible with itself; the associativity of $\Delta$ expressed by part~(b) of  Definition~\ref{iterative-hasse-system} implies condition~(iii) is true of the sequence $u$.
We thus obtain an iterative $\underline\D$-ring $(k\langle X\rangle,E^X)$ extending $(k,E)$.
This is the {\em $\underline\D$-co-ordinate ring} of the affine scheme $X$.

Now let $\underline Z$ be a $\underline\D$-subscheme of $X$ over $k$, and denote by $k\langle \underline Z\rangle$ be the direct limit of the $k[Z_m]$.
Then since $\hat\Delta_{m,n}^X$ restricts to a morphism from $Z_{m+n}$ to $\tau_n Z_m$ (by definition of $\underline\D$-subscheme), taking inverse limits shows that $u_n$ restricts to a morphism from $\spec\big(k\langle\underline Z\rangle\big)\to\tau_n\spec\big(k\langle\underline Z\rangle\big)$ also satisfying the hypotheses of Proposition~\ref{refsidea}.
Hence $E^X$ induces an iterative $\underline\D$-ring structure on $k\langle\underline Z\rangle$ extending $(k,E)$.
We denote the $\underline\D$-ring structure by $E^{\underline Z}$, and call $\big(k\langle\underline Z\rangle,E^{\underline Z}\big)$ the {\em $\underline\D$-co-ordinate ring of $\underline Z$}.
Note that the quotient map $\rho:k\langle X\rangle\to k\langle\underline Z\rangle$ is a {\em $\underline\D$-homomorphism}; that is,
$$
\xymatrix{
k\langle X\rangle\ar[d]^\rho\ar[r]^{E_n^X} & \D_n\big(k\langle X\rangle\big)\ar[d]^{\D_n(\rho)}\\
k\langle\underline Z\rangle\ar[r]^{E_n^{\underline Z}} & \D_n\big(k\langle \underline Z\rangle\big)\\
}
$$
commutes for each $n$.

\begin{remark}
\label{oldidea}
The maps $E^X_n:k\langle X\rangle\to\D_n\big(k\langle X\rangle\big)$ can also be seen as the direct limit as $m$ goes to infinity of certain $k$-algebra homomorphisms
$$E^{X,m}_n:k[\tau_mX]\to\D_n^{E_n}\big(k[\tau_{m+n}X]\big)$$
The $E^{X,m}_n$ are the maps on co-ordinate rings associated to the morphisms
$$r_n^{\tau_m X}\circ\big(\hat\Delta_{m,n}\times_k\D_n(k)\big) \ : \ 
\tau_{m+n} X\times_k\D_n(k)\to\tau_m X$$
Similarly, $E^{\underline Z}_n$ on $k\langle\underline Z\rangle$ is the direct limit as $m$ goes to infinity of the $k$-algebra homomorphisms
$$E_n^{\underline Z,m}:k[Z_m]\to\D_n^{E_n}\big(k[Z_{m+n}]\big)$$
induced by
$r_n^{\tau_mX}\circ \big(\hat\Delta_{(m,n)}\times_k\D_n(k)\big)$ restricted to $Z_{m+n}\times_k\D_n(k)$.
\end{remark}

\subsection{Generic points in fields}
Let us now specialise to the case when $k$ is a field.
The geometric theory of $\underline \D$-subvarieties over $k$ goes much more smoothly if we can be guaranteed that in some $\underline \D$-field extension our $\underline \D$-subvariety has a generic point.
This will not always be the case.
In this section we introduce a condition on the Hasse-Schmidt system which will guarantee us the existence of generic points in $\underline\D$-field extensions.

\begin{definition}
Suppose $(R,E)$ is an iterative $\underline\D$-ring and $S\subseteq R\setminus\{0\}$ is a multiplicatively closed set. We say that {\em $E$ localises to $S^{-1}R$} if for every $n\in\mathbb{N}$, $E_n:R\to\D_n(R)$ extends to a ring homomorphism $\tilde{E}_n$ such that
$$
\xymatrix{
R\ar[rr]^{E_n}\ar[d]_{\iota} && \D_n(R)\ar[d]^{\D_n(\iota)}\\
S^{-1}R\ar[rr]^{\tilde{E}_n}&&\D_n(S^{-1}R)
}$$
commutes.
We say that the Hasse-Schmidt system $\underline\D$ {\em extends to fields} if whenever $(R,E)$ is an iterative $\underline \D$-integral domain, and $K$ is the fraction field, then $E$ localises to $K$.
\end{definition}

\begin{remark}
\label{localcompat}
{\em Suppose $E$ localises to $S^{-1}R$.
Then each $\tilde{E}_n$ is uniquely determined and $\big(S^{-1}R,\tilde{E}:=(\tilde{E}_n:n\in\mathbb{N})\big)$ is an iterative $\D$-ring.}
Indeed, if $\frac{\iota a}{\iota b}\in S^{-1}R$ then as $\tilde{E}_n$ is a ring homomorphism, $\tilde{E}_n(\iota b)$ is a unit in $\D_n(S^{-1}R)$ and $\tilde E_n(\frac{\iota a}{\iota b})=\frac{E_n(\iota a)}{E_n(\iota b)}=\frac{\D_n(\iota)\big(E_n(a)\big)}{\D_n(\iota)\big(E_n(b)\big)}$.
This gives uniqueness.
Two straighforward diagram chases now show that $\tilde{E}:=(\tilde{E}_n:n\in\mathbb{N})$ is compatible with $\pi$ and $\Delta$ (since $E$ is), hence making $S^{-1}R$ into an iterative $\underline\D$-ring.
\end{remark}

\begin{proposition}
\label{nillocal}
Suppose that for all $n\in\mathbb N$, the kernel of $\pi_{n,0}:\D_n(A)\to A$ is a nilpotent ideal.
Then for any iterative $\underline \D$-ring $(R,E)$ over $A$, and any multiplicatively closed set $S\subseteq R\setminus\{0\}$, $E$ localises to $S^{-1}R$.
In particular, $\underline\D$ extends to fields.
\end{proposition}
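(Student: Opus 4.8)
The plan is to reduce the whole statement to a single observation: for each $n$, the kernel of $\pi_{n,0}^{S^{-1}R}:\D_n(S^{-1}R)\to S^{-1}R$ is nilpotent, and this forces the image of $E_n(s)$ in $\D_n(S^{-1}R)$ to be a unit for every $s\in S$, after which the universal property of localisation produces $\tilde E_n$ automatically.

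First I would unwind the structure. Fix $n$ and write $\mathfrak n:=\ker\big(\pi_{n,0}:\D_n(A)\to A\big)$, which is nilpotent by hypothesis, say $\mathfrak n^{N}=0$. Since $\D_n(B)=B\otimes_A\D_n(A)$ for every $A$-algebra $B$, with $\pi_{n,0}^B$ the base change of $\pi_{n,0}$, right-exactness of $B\otimes_A -$ identifies $\ker(\pi_{n,0}^B)$ with the ideal generated by $\{1\otimes x:x\in\mathfrak n\}$; a product of $N$ such generators already lies in $B\otimes_A\mathfrak n^{N}=0$, so $\ker(\pi_{n,0}^B)$ is nilpotent of index $\le N$. (I argue at the level of generators rather than claiming $\ker(\pi_{n,0}^B)\cong B\otimes_A\mathfrak n$ precisely because $B$ need not be flat over $A$.) Apply this with $B=S^{-1}R$.

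Next I would show $\D_n(\iota)\big(E_n(s)\big)$ is a unit in $\D_n(S^{-1}R)$ for $s\in S$. By naturality of $\pi_{n,0}$ applied to $\iota:R\to S^{-1}R$, together with Definition~\ref{dring}(ii) and $E_0=\id$, its image under $\pi_{n,0}^{S^{-1}R}$ equals $\iota\big(\pi_{n,0}^R(E_n(s))\big)=\iota(E_0(s))=s/1$, a unit of $S^{-1}R$. Since an element of a ring whose class modulo a nilpotent ideal is a unit is itself a unit — if $uv=1+y$ with $y$ nilpotent, then $1+y$, hence $u$, is invertible — and $\ker(\pi_{n,0}^{S^{-1}R})$ is nilpotent, we conclude $\D_n(\iota)(E_n(s))$ is a unit. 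Therefore the ring homomorphism $\D_n(\iota)\circ E_n:R\to\D_n(S^{-1}R)$ carries $S$ into the units, so it factors uniquely through $\iota$, yielding the required $\tilde E_n:S^{-1}R\to\D_n(S^{-1}R)$ with $\tilde E_n\circ\iota=\D_n(\iota)\circ E_n$; that is, $E$ localises to $S^{-1}R$.

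Finally, that $\tilde E:=(\tilde E_n:n\in\mathbb N)$ is an iterative $\underline\D$-ring structure on $S^{-1}R$ is already recorded in Remark~\ref{localcompat}, so nothing further is needed there, and the ``in particular'' clause is the special case $S=R\setminus\{0\}$ when $R$ is an integral domain. I do not anticipate a genuine obstacle: the only point requiring slight care is the non-flatness of $R$ over $A$ noted above; everything else is the universal property of localisation plus the bookkeeping already packaged in Remark~\ref{localcompat}.
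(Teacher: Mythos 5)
Your proposal is correct and follows essentially the same route as the paper: reduce to showing $\D_n(\iota)(E_n(s))$ is a unit, note that it maps to the unit $\iota(s)$ under $\pi_{n,0}^{S^{-1}R}$ whose kernel is nilpotent, and invoke the universal property of localisation together with Remark~\ref{localcompat}. Your extra care about non-flatness when identifying the kernel of the base-changed map is a harmless refinement of a point the paper simply asserts.
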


\begin{proof}
Fix $n\in\mathbb{N}$ and let $I_n\subseteq\D_n(A)$ be the kernel of $\pi_{n,0}$.

By the universal property of localisations, the existence of such $\tilde E_n$ will follow once we show that $\D_n(\iota)\big(E_n(s)\big)$ is a unit in $\D_n(S^{-1}R)$, for each $s\in S$.
Consider the commuting square
$$
\xymatrix{
R\ar[d]_{\iota} && \D_n(R)\ar[ll]_{\pi_{n,0}^R}\ar[d]^{\D_n(\iota)}\\
S^{-1}R&&\D_n(S^{-1}R)\ar[ll]_{\pi_{n,0}^{S^{-1}R}}
}$$
Now, the kernel of the surjective homomorphism $\pi_{n,0}:\D_n(S^{-1}R)\to S^{-1}R$ is the nilpotent ideal $S^{-1}R\otimes_A I_n$, and hence the units of $\D_n(S^{-1}R)$ are just the pull-backs of the units of $S^{-1}R$.
In particular, as $\pi_{n,0}\big(\D_n(\iota)(E_n(s))\big)=\iota(s)$ is a unit in $S^{-1}R$,
we get that $\D_n(\iota)\big(E_n(s)\big)$ is a unit in $\D_n(S^{-1}R)$, as desired.
So the required extensions $\tilde{E}_n:S^{-1}R\to\D_n(S^{-1}R)$ exist.
\end{proof}

\begin{corollary}
\label{extendtofielddifferential}
The iterative Hasse-Schmidt system $\operatorname{HSD}_e$ used to study Hasse-Schmidt differential rings in Example~\ref{differential} extends to fields.
\qed
\end{corollary}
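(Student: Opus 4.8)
The plan is to deduce the corollary immediately from Proposition~\ref{nillocal}, the only point being to verify that the hypothesis of that proposition holds for $\operatorname{HSD}_e$. Recall that in Example~\ref{differential} we have $A=\mathbb{Z}$ and, for any ring $R$, $\D_n(R)=R[\eta_1,\dots,\eta_e]/(\eta_1,\dots,\eta_e)^{n+1}$, with $\pi_{n,0}^R:\D_n(R)\to R=\D_0(R)$ the quotient map sending each $\eta_i$ to $0$.

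First I would identify the kernel of $\pi_{n,0}:\D_n(A)\to A$. It is simply the image in $\D_n(A)=\mathbb{Z}[\eta_1,\dots,\eta_e]/(\eta_1,\dots,\eta_e)^{n+1}$ of the ideal $(\eta_1,\dots,\eta_e)$; call this ideal $I_n$. Next I would observe that $I_n$ is nilpotent: its $(n+1)$st power $I_n^{\,n+1}$ is the image of $(\eta_1,\dots,\eta_e)^{n+1}$, which is zero in $\D_n(A)$ by the very definition of $\D_n$. Thus for every $n\in\mathbb{N}$ the kernel of $\pi_{n,0}:\D_n(A)\to A$ is a nilpotent ideal, so Proposition~\ref{nillocal} applies and yields that $\operatorname{HSD}_e$ extends to fields.

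There is no genuine obstacle here: all the real content sits in Proposition~\ref{nillocal} (and in Remark~\ref{localcompat}, which records that the localised operators again form an iterative $\underline\D$-ring). The only thing that needs minor care is to check nilpotence of the kernel of $\pi_{n,0}$ \emph{over the fixed base ring} $A=\mathbb{Z}$, which is precisely the finite-length statement $(\eta_1,\dots,\eta_e)^{n+1}=0$ in $\D_n(\mathbb{Z})$ rather than any assertion about an arbitrary $\underline\D$-ring; this is what the hypothesis of Proposition~\ref{nillocal} demands and it is manifestly true here.
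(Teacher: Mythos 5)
Your proposal is correct and matches the paper's intent exactly: the paper states this corollary with no proof (just \qed) precisely because it is the immediate application of Proposition~\ref{nillocal}, and your verification that the kernel of $\pi_{n,0}:\D_n(\mathbb{Z})\to\mathbb{Z}$ is the image of $(\eta_1,\dots,\eta_e)$, hence nilpotent of index at most $n+1$, is the only thing that needs checking.
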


Let us return to our study of Hasse-Schmidt subvarieties over the iterative $\underline\D$-field $(k,E)$.
The following proposition ensures that if $\underline\D$ extends to fields then Hasse-Schmidt varieties over $k$ will always have many rational points in $\underline\D$-field extensions of $(k,E)$.
More precisely,

\begin{proposition}
\label{generics}
Suppose $\underline\D$ extends to fields.
Let $X$ be an algebraic variety over $k$ and $\underline Z$ a dominant irreducible Hasse-Schmidt subvariety of $X$ over $k$.
Then there exists an iterative $\underline \cD$-field extension $(K,E)$ of $(k,E)$ and a point $b\in \underline Z(K)$ such that $\nabla_n(b)$ is $k$-generic in $Z_n$ for all $n\in\mathbb{N}$.
We say that $b$ is {\em $k$-generic in $\underline Z$}.
\end{proposition}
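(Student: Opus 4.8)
The plan is to build the generic point as a direct limit of "partial" generic points living over the coordinate rings of the $Z_n$, and then use the hypothesis that $\underline\D$ extends to fields to pass to a $\underline\D$-field at the end. Concretely, I would work with the $\underline\D$-coordinate ring $\big(k\langle\underline Z\rangle,E^{\underline Z}\big)$ constructed in Section~\ref{dalgebra}. Since each $Z_n$ is irreducible and the transition maps $Z_{n+1}\to Z_n$ are dominant (dominance of $\underline Z$), the rings $k[Z_n]$ inject into one another along the maps induced by $\hat\pi_{n+1,n}$, so the direct limit $k\langle\underline Z\rangle=\varinjlim k[Z_n]$ is a domain. Indeed an element of $k\langle\underline Z\rangle$ is zero iff it is zero in some $k[Z_m]$, and a product of nonzero elements, which we may take to come from a common $k[Z_m]$ by cofinality, is nonzero because $k[Z_m]$ is a domain.

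Next I would let $K_0$ be the fraction field of $k\langle\underline Z\rangle$. Because $\underline\D$ extends to fields (this is where the hypothesis is used), $E^{\underline Z}$ localises to $K_0$, giving by Remark~\ref{localcompat} an iterative $\underline\D$-field $(K_0,\widetilde{E^{\underline Z}})$ extending $(k,E)$. The candidate point $b\in X(K_0)$ is the one determined by the generic point of $Z_0=\spec(k[Z_0])$ composed with the inclusion $k[Z_0]\hookrightarrow k\langle\underline Z\rangle\hookrightarrow K_0$; equivalently, $b$ is the image in $X(K_0)$ of the tautological $k\langle\underline Z\rangle$-point of $Z_0$. One must check $b\in\underline Z(K_0)$, i.e. $\nabla_n(b)\in Z_n(K_0)$ for all $n$. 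This is exactly where the $\underline\D$-structure on $k\langle\underline Z\rangle$ does its job: by construction (see Remark~\ref{oldidea}) the map $u_n\colon\spec(k\langle\underline Z\rangle)\to\tau_n\spec(k\langle\underline Z\rangle)$ is the inverse limit of the morphisms $\hat\Delta_{(m,n)}$ restricted to $Z_{m+n}$, and $\nabla_n$ on $K_0$-points is induced by the localised structure; chasing the definition of $\nabla_n$ through the identification $\tau_n Y(R)=Y(\D_n^{E_n}(R))$ shows $\nabla_n(b)$ is precisely the composite $\spec(k\langle\underline Z\rangle)\to\tau_n\spec(k\langle\underline Z\rangle)\to\tau_n Z_0\hookrightarrow\tau_n X$ coming from $u_n$ followed by the natural map out of $Z_n\subseteq\tau_n Z_0$, hence factors through $Z_n$. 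So $\nabla_n(b)$ lands in $Z_n(K_0)$. Finally, $\nabla_n(b)$ is $k$-generic in $Z_n$: the $K_0$-point $\nabla_n(b)$ of $Z_n$ corresponds to a $k$-algebra map $k[Z_n]\to K_0$ whose kernel is $(0)$, since $k[Z_n]\to k\langle\underline Z\rangle\to K_0$ is injective by the first paragraph; an injective map to a field from a finitely generated $k$-domain is exactly a generic point.

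The main obstacle I anticipate is the bookkeeping in the second-to-last step: verifying that the abstractly-defined nabla map $\nabla_n\colon X(K_0)\to\tau_n X(K_0)$, when evaluated at $b$, agrees with the map obtained from the $u_n$ on $\spec(k\langle\underline Z\rangle)$ — in other words, that the $\underline\D$-ring structure $\widetilde{E^{\underline Z}}$ we put on $K_0$ really is the one "witnessed by" the inclusion of $b$ into $Z_n$. This is morally immediate from the construction of $E^{\underline Z}$ via Proposition~\ref{refsidea} and the adjunction of Remark~\ref{adjoint}, but it requires tracking a point through the identification~(\ref{prolongpoint}) and the definition of $u_n$ as an inverse limit; once that compatibility is pinned down, genericity and membership in $\underline Z(K_0)$ both fall out. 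A minor additional point to dispatch is that $K_0$ is genuinely a field extension of $k$ (clear, as $k\hookrightarrow k[Z_0]\hookrightarrow k\langle\underline Z\rangle$) and that the localised structure restricts the given $(k,E)$, which is built into Remark~\ref{localcompat}.
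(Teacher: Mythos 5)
Your proposal is correct and follows essentially the same route as the paper: pass to the fraction field $K$ of the $\underline\D$-coordinate ring $k\langle\underline Z\rangle$ (a domain by irreducibility and dominance), use the extends-to-fields hypothesis to equip $K$ with an iterative $\underline\D$-structure, take $b$ to be the tautological point, and verify via Remark~\ref{oldidea} that $\nabla_n(b)$ coincides with the generic point $\spec\big(k[Z_n]\to K\big)$ of $Z_n$. The ``bookkeeping'' step you flag is exactly the diagram chase the paper carries out, and your identification of where it reduces to the construction of $E^{\underline Z}$ is the right one.
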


\begin{proof}
Suppose $\underline Z$ is a dominant irreducible Hasse-Schmidt subvariety of $X$ over $k$.
We construct a $\underline\D$-extension $K$ of $k$ such that $\underline Z(K)$ contains a ``$k$-generic'' point.
By irrreducibility each $k[Z_n]$, and hence the $\underline\D$-co-ordinate ring $k\langle\underline Z\rangle$, is an integral domain.
Let $K$ be the fraction field of $k\langle\underline Z\rangle$, the {\em Hasse-Schmidt rational function field} of $\underline Z$.
Since $\underline\D$ extends to fields, the iterative $\underline\D$-ring structure on $k\langle\underline Z\rangle$ extends to an iterative $\underline\D$-field structure on $K$.

Let $f:k\langle\underline Z\rangle\to K$ be the inclusion of the integral domain in its fraction field.
For each $n\in \mathbb{N}$, let $f_n:k[Z_n]\to K$ be the homomorphism obtained from $a$ by precomposing with the direct limit map from $k[Z_n]$ to $k\langle\underline Z\rangle$.
The dominance of the maps $Z_{m+1}\to Z_m$ imply that $f_n$ factors through $k(Z_n)$, the rational function field of $Z_n$.
That is, the point $a_n=\spec(f_n)\in Z_n(K)$ is $k$-generic in $Z_n$.

We prove that for each $n\in\mathbb{N}$,
$\nabla_n(a_0)=a_n$.
This will suffice as it implies that $a_0$ is the $k$-generic point of $\underline Z(K)$ that we want.
Under the standard identification, $\nabla_n(a_0)$ is a $\D_n^{E_n}(K)$ point of $Z_0$.
As $Z_n\subseteq\tau_nZ_0$, we can also view $a_n$ as a $\D_n^{E_n}(K)$ point of $Z_0$.
It is as such that we prove they agree.

From the fact that $E_n$ on $K$ extends $E_n$ on $k\langle\underline Z\rangle$, and taking spectra, we get

$$
\xymatrix{
\spec(K)\ar[d]_{\spec(f)} && \spec(K)\times_k\D_n(k) \ar[d]^{\spec\big(\D_n(f)\big)}\ar[ll]_{\spec(E_n)}\\
\spec\big(k\langle\underline Z\rangle\big) && \spec(k\langle\underline Z\rangle)\times_k\D_n(k) \ar[ll]_{\spec(E_n)} 
}$$
On the other hand we know that  $E_n$ on $k\langle\underline Z\rangle$ is the direct limit as $m$ goes to infinity of the $k$-algebra homomorphisms
$E_n^{\underline Z,m}:k[Z_m]\to\D_n^{E_n}\big(k[Z_{m+n}]\big)$
induced by
$r_n^{\tau_mX}\circ \big(\hat\Delta_{(m,n)}\times_k\D_n(k)\big):\tau_{m+n}X\times_k\D_n(k)\to\tau_mX$ restricted to $Z_{m+n}\times_k\D_n(k)$ -- see Remark~\ref{oldidea}.
It follows, setting $m=0$, that
$$
\xymatrix{
\spec\big(k\langle\underline Z\rangle\big)\ar[d] &&\spec(k\langle\underline Z\rangle)\times_k\D_n(k)\ar[ll]_{\spec(E_n)}\ar[d]\\
Z_0 &&Z_n\times_k\D_n(k)\ar[ll]_{r^X_n|_{Z_n\times_k\D_n(k)}}
}$$
commutes.
Note that $r^X_n|_{Z_n\times_k\D_n(k)}$ is the restriction of $r_n^{Z_0}:\tau_nZ_0\times_k\D_n(k)\to Z_0$ to $Z_n\times_k\D_n(k)$.
Putting the two diagrams together we get
$$
\xymatrix{
\spec(K)\ar[dd]_{a_0} &&\spec(K)\times_k\D_n(k) \ar[dd]^{a_n\otimes_k\D_n(k)} \ar[ll]_{\spec(E_n)}\\
\\
Z_0 & \tau_nZ_0\times_k\D_n(k) \ar[l]_{r_n^{Z_0}} & Z_n\times_k\D_n(k)\ar[l]_{\supseteq}
}$$
Top-right to bottom-left counter-clockwise is exactly $\nabla(a_0)$ viewed as a $\D_n^{E_n}(K)$-point of $Z_0$, whereas top-right to bottom-left clockwise is $a_n$ viewed as a $\D_n^{E_n}(K)$-point of $Z_0$
Hence $\nabla_n(a_0)=a_n$.

We have shown that $a_0\in\underline Z(K)$ and that it is $k$-generic in $\underline Z$.
\end{proof}

\begin{definition}
\label{rich}
We say that $(k,E)$ is {\em rich} if whenever $X$ is an algebraic variety over $k$ and $\underline Z$ is a dominant irreducible Hasse-Schmidt subvariety of $X$ over $k$, then $\nabla_n\big(\underline Z(k)\big)$ is Zariski-dense in $Z_n$ for all $n\in\mathbb{N}$.
\end{definition}

\begin{corollary}
\label{densepoints}
Suppose $\underline\D$ extends to fields.
Then every iterative $\underline\D$-field extends to a rich iterative $\underline\cD$-field.
\end{corollary}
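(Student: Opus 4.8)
The plan is to build $(K,E)$ as the union of an increasing chain $(L_\alpha)_{\alpha<\omega_1}$ of iterative $\underline\D$-fields with $L_0=(k,E)$, where at each successor step we iterate Proposition~\ref{generics} to adjoin ``generic points'', and at limits we take unions. Unions present no difficulty: each $\D_n$ is finite free and hence commutes with filtered colimits, so $\D_n(\varinjlim L_\alpha)=\varinjlim\D_n(L_\alpha)$; the maps $E_n$ and the morphisms $\pi,\Delta$ pass to the colimit, and the iterativity diagrams, being colimits of commuting diagrams, still commute. Thus every $L_\lambda$, and in particular $K:=\bigcup_{\alpha<\omega_1}L_\alpha$, is an iterative $\underline\D$-field extending $(k,E)$.

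For the step from $L_\alpha$ to $L_{\alpha+1}$, fix an enumeration of the set of all \emph{tasks} over $L_\alpha$: quadruples $(X,\underline Z,n,W)$ with $X$ a variety over $L_\alpha$, $\underline Z$ a dominant irreducible Hasse-Schmidt subvariety of $X$ over $L_\alpha$, $n\in\mathbb N$, and $W\subsetneq Z_n$ a proper $L_\alpha$-closed subvariety. Running along this enumeration we construct an internal chain of iterative $\underline\D$-fields, beginning at $L_\alpha$ and taking unions at limit steps; to treat a task $(X,\underline Z,n,W)$ when the field reached so far is $M$ we base change everything to $M$, do nothing if $\underline Z_M$ has become reducible over $M$, and otherwise apply Proposition~\ref{generics} to $\underline Z_M$ over $M$ to get an iterative $\underline\D$-field extension $M'\supseteq M$ and a point $p\in\underline Z_M(M')=\underline Z(M')$ with $\nabla_n(p)$ $M$-generic in $(Z_n)_M$; being generic, $\nabla_n(p)$ avoids the proper closed subscheme $W_M$. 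We let $L_{\alpha+1}$ be the union of the internal chain so obtained.

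Now we check that $K$ is rich. Let $\underline Z$ be a dominant irreducible Hasse-Schmidt subvariety of a variety $X$ over $K$ and fix $n$. Since $X$ and the countably many $Z_m$ are all of finite type and $\operatorname{cf}(\omega_1)>\omega$, there is some $\beta<\omega_1$ over which $X$, every $Z_m$, and the compatibility conditions~(1) and~(2) of Definition~\ref{hassevariety} are defined; moreover $\underline Z$ is again dominant and irreducible over $L_\beta$, since $(Z_m)_K\to(Z_m)_{L_\beta}$ is faithfully flat, hence surjective and dominant, so that irreducibility and dominance descend. Now let $W\subsetneq(Z_n)_K$ be an arbitrary proper closed subvariety; it is defined over some $L_\gamma$ with $\gamma\geq\beta$, say $W=(W_0)_K$ with $W_0\subsetneq(Z_n)_{L_\gamma}$ a proper $L_\gamma$-closed subvariety. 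The task $(X_{L_\gamma},\underline Z_{L_\gamma},n,W_0)$ occurs in the enumeration used at stage $\gamma$; when it is treated the field $M$ reached so far satisfies $L_\gamma\subseteq M\subseteq L_{\gamma+1}\subseteq K$, and $\underline Z_M$ is still irreducible over $M$ because $(Z_m)_K$ is irreducible and surjects onto $(Z_m)_M$. Hence we produced $p\in\underline Z(M')\subseteq\underline Z(K)$ with $\nabla_n(p)\notin(W_0)_M$: some element of the ideal of $(W_0)_M$ is carried by $\nabla_n(p)$ to a nonzero element of $M'$, hence to a nonzero element of $K$, so $\nabla_n(p)\notin W$. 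Therefore $\nabla_n(\underline Z(K))$ is contained in no proper closed subvariety of the irreducible variety $(Z_n)_K$, which is to say it is Zariski dense. As $\underline Z$ and $n$ were arbitrary, $(K,E)$ is rich.

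The main difficulty is organisational rather than computational. Two points carry the argument. First, irreducibility and dominance of $\underline Z$ over the eventual field $K$ force the same over every earlier $L_\alpha$ by faithfully flat descent, so the policy ``treat the task, doing nothing when the Hasse-Schmidt subvariety has become reducible'' does, in the end, treat every task relevant to richness — this is what lets the successor steps proceed by direct appeals to Proposition~\ref{generics} rather than forcing us to amalgamate $\underline\D$-fields. Second, a chain of length $\omega_1$ has uncountable cofinality, which is precisely what is needed for a Hasse-Schmidt subvariety over $K$, an infinite tower of finite-type schemes, to already be defined over an initial segment. The remaining verifications — that a generic point avoids every proper closed subvariety, and that this is preserved under the faithfully flat base changes involved — are routine.
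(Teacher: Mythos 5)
Your proof is correct and follows essentially the same route as the paper's: an $\omega_1$-chain of iterative $\underline\D$-field extensions built by repeatedly adjoining generic points via Proposition~\ref{generics}, with richness verified by descending any Hasse-Schmidt subvariety over the union to a countable stage using the uncountable cofinality of $\omega_1$. The only cosmetic difference is that you enumerate ``tasks'' that carry the witnessing proper subvariety $W$ along, where the paper enumerates the dominant irreducible Hasse-Schmidt subvarieties themselves; both handle the possible loss of irreducibility at intermediate stages by the same descent observation.
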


\begin{proof}
Suppose $(k,E)$ is an iterative $\underline\D$-field.
We build a rich $\underline \cD$-field, $L$, as a direct limit of an $\omega_1$-chain of $\underline\cD$-field extensions of $k$.  
Start with $L_0 = k$.  
Given $L_m$, list all of the dominant irreducible $\underline\cD$-varieties over $L_m$,
$(\underline Z_\alpha:\alpha<\kappa)$.
We build $L_{m+1,\beta}$ by transfinite
recursion on $\beta<\kappa$.
At stage $\beta$, if $\beta>0$ then let $M = M_{m,\beta}$ be the union of
$L_{m+1,\gamma}$ for $\gamma < \beta$.
If $\beta=0$ then let $M=L_m$.
Let $\underline Z$ be an $M$-irreducible component of
$Z_\beta$.
Let $L_{m+1,\beta}\supseteq M$ and $a = a_{m,\beta}\in\underline Z_\beta(L_{m+1,\beta})$
be an $M$-generic point of $\underline Z$, as given by Proposition~\ref{generics}.
We then let $L_{m+1}$ be the union of the $L_{m+1,\alpha}$, $\alpha<\kappa$.  
At limit stages we take unions, and we set $L:= L_{\omega_1}.$

Suppose now that $\underline Z = (Z_n)$ is a dominant irreducible Hasse-Schmidt variety over $L$ and $W$ a proper subvariety of some $Z_n$.
Then $Z$ and $W$  are defined over some countable subfield of $L$ and as
such are defined over (and irreducible over) some $L_m$.
So, for some $\beta$, $\underline Z = \underline Z_\beta$ for the listing of the dominant irreducible Hasse-Schmidt varieties over $L_m$.
We have $L_m\subseteq M_{m,\beta}\subseteq L_{m+1,\beta}\subseteq L_{m+1}\subseteq L$.
As $\underline Z$ is irreducible over $L$, it was already irreducible over
$M_{m,\beta}$.
Thus, $a_{m,\beta}\in\underline Z(L_{m+1,\beta})$ is $M_{m,\beta}$-generic, and hence $L_m$-generic.
In particular,
$\nabla_n(a_{m,\beta})$  is not an  element of  $W$.
Thus,  $\nabla_n(\underline Z(L))$
is not  contained in  $W(L)$.
We have shown that $\nabla_n\big(\underline Z(L)\big)$ is Zariski-dense in $Z_n$, for all $n\in\mathbb{N}$.
\end{proof}

We make immediate use of the existence of sufficiently many rational points in the following proposition, which we will need later, and which says that  applying $\nabla$ to the rational points of a Hasse-Schmidt subvariety produces the rational points of another Hasse-Schmidt subvariety.

\begin{proposition}
\label{nablanz}
Suppose $(k,E)$ is a rich $\underline D$-field, $X$ is a variety over $k$, and $\underline Z$ is a dominant irreducible Hasse-Schmidt subvariety of $X$ over $k$.
Then for each $m\in\mathbb{N}$ there exists a dominant Hasse-Schmidt subvariety $\underline Y$ of $Z_m$ with $\nabla_m\big(\underline Z(k)\big)=\underline Y(k)$.
We denote this Hasse-Schmidt subvariety by $\nabla_m\underline Z$.
\end{proposition}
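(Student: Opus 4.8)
The plan is to write down $\underline Y$ explicitly and then verify the two halves of the claimed set equality. By Lemma~\ref{n=1enough}, $\hat\Delta_{(m,n)}$ restricts to a morphism $Z_{m+n}\to\tau_n(Z_m)$; since moreover $\hat\Delta_{(m,n)}\colon\tau_{m+n}X\to\tau_n\tau_mX$ is a closed embedding (Remark~\ref{deltainduced}), the image $Y_n:=\hat\Delta_{(m,n)}(Z_{m+n})$ is a closed subvariety of $\tau_n(Z_m)$, isomorphic via $\hat\Delta_{(m,n)}$ to $Z_{m+n}$. Put $\underline Y:=(Y_n\mid n\in\mathbb N)$; this is the candidate.

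First I would check that $\underline Y$ is a dominant Hasse-Schmidt subvariety of $Z_m$. For condition~(1) of Definition~\ref{hassevariety}, Definition~\ref{iterative-hasse-system}(a) -- combined with the functoriality of~\cite{paperA} that identifies $\hat\pi^{\tau_mX}_{n+1,n}$ with the morphism induced by $\D_m(\pi_{n+1,n})$ -- gives $\hat\pi^{\tau_mX}_{n+1,n}\circ\hat\Delta_{(m,n+1)}=\hat\Delta_{(m,n)}\circ\hat\pi^X_{m+n+1,m+n}$, so $\hat\pi_{n+1,n}$ carries $Y_{n+1}=\hat\Delta_{(m,n+1)}(Z_{m+n+1})$ onto $\hat\Delta_{(m,n)}\bigl(\hat\pi_{m+n+1,m+n}(Z_{m+n+1})\bigr)$, which is a dense subset of $Y_n$ because $\hat\pi_{m+n+1,m+n}$ is dominant from $Z_{m+n+1}$ onto $Z_{m+n}$ and $\hat\Delta_{(m,n)}$ is an isomorphism onto $Y_n$; hence (using that $Y_{n+1}$ is reduced) $\hat\pi^{Z_m}_{n+1,n}$ restricts to a dominant morphism $Y_{n+1}\to Y_n$. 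Condition~(2) is handled the same way using the associativity square of Definition~\ref{iterative-hasse-system}(b) -- the one exploited in the proof of Lemma~\ref{n=1enough} -- namely $\hat\Delta^{\tau_mX}_{(n,1)}\circ\hat\Delta^X_{(m,n+1)}=\tau(\hat\Delta^X_{(m,n)})\circ\hat\Delta^X_{(m+n,1)}$: since $\hat\Delta_{(m+n,1)}$ sends $Z_{m+n+1}$ into $\tau(Z_{m+n})$ (condition~(2) for $\underline Z$) and $\tau(\hat\Delta_{(m,n)})$ sends $\tau(Z_{m+n})$ isomorphically onto $\tau(Y_n)$, the composite sends $Z_{m+n+1}$ into $\tau(Y_n)$, so $\hat\Delta^{Z_m}_{(n,1)}$ restricts to a morphism $Y_{n+1}\to\tau(Y_n)$.

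The inclusion $\nabla_m(\underline Z(k))\subseteq\underline Y(k)$ is immediate from Remark~\ref{deltainduced}: for $p\in\underline Z(k)$ and $q:=\nabla_m(p)$ one has $\nabla_n(q)=\nabla_n(\nabla_m(p))=\hat\Delta_{(m,n)}(\nabla_{m+n}(p))\in Y_n(k)$ for every $n$, so $q\in\underline Y(k)$. The reverse inclusion is the substantive part. Given $q\in\underline Y(k)$, set $p:=\hat\pi_{m,0}(q)\in X(k)$; I must show $q=\nabla_m(p)$ and $p\in\underline Z(k)$. Because $\hat\Delta_{(m,n)}$ restricts to an isomorphism $Z_{m+n}\xrightarrow{\sim}Y_n$, the condition $\nabla_n(q)\in Y_n(k)$ supplies a unique $\xi_n\in Z_{m+n}(k)$ with $\hat\Delta_{(m,n)}(\xi_n)=\nabla_n(q)$ (and $\xi_0=q$). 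Taking $n=m$: Definition~\ref{iterative-hasse-system}(a) shows that both natural morphisms $\tau_m\tau_mX\to\tau_mX$ -- namely $\hat\pi^{\tau_mX}_{m,0}$ and $\tau_m(\hat\pi^X_{m,0})$ -- precompose with $\hat\Delta_{(m,m)}$ to yield $\hat\pi^X_{2m,m}$; evaluating at $\xi_m$, the first together with the section property of $\nabla$ (Lemma~\ref{nablaprop}) gives $\hat\pi_{2m,m}(\xi_m)=q$, while the second together with the naturality of $\nabla$ gives $\hat\pi_{2m,m}(\xi_m)=\nabla_m(p)$; hence $q=\nabla_m(p)$. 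It remains to see $p\in\underline Z(k)$: for $j\le m$, $\nabla_j(p)=\hat\pi_{m,j}(q)\in Z_j(k)$ since $\underline Z$ is a Hasse-Schmidt subscheme; and for $j=m+n>m$, Remark~\ref{deltainduced} gives $\hat\Delta_{(m,n)}(\nabla_{m+n}(p))=\nabla_n(\nabla_m(p))=\nabla_n(q)=\hat\Delta_{(m,n)}(\xi_n)$, so $\nabla_{m+n}(p)=\xi_n\in Z_{m+n}(k)$ by injectivity of $\hat\Delta_{(m,n)}$.

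The conceptual content above is short; the part I expect to be fussiest is the bookkeeping with the induced morphisms between iterated prolongation spaces -- matching $\hat\pi^{\tau_mX}_{n,0}$, $\tau_n(\hat\pi^X_{m,0})$, $\hat\pi^{Z_m}_{n+1,n}$, $\hat\Delta^{Z_m}_{(n,1)}$ and their kin with the morphisms $\widehat{\D_m(\pi_{n,0})}$, $\widehat{\pi_{(m,n),(0,n)}}$, etc.\ coming from the composed system $\D_{(m,n)}=\D_m\D_n$, so that the axioms in Definition~\ref{iterative-hasse-system} apply. All of this is formal and is precisely the functoriality developed in Section~4 of~\cite{paperA}.
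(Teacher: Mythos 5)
Your proof is correct, and most of it coincides with the paper's: the same candidate $Y_n:=\hat\Delta_{(m,n)}(Z_{m+n})$, the same verification that $\underline Y$ is a dominant Hasse-Schmidt subvariety via parts (a) and (b) of Definition~\ref{iterative-hasse-system}, and the same easy inclusion $\nabla_m\bigl(\underline Z(k)\bigr)\subseteq\underline Y(k)$. Where you genuinely diverge is in the reverse inclusion, and your route is cleaner. The paper first uses richness to identify $Y_m$ with the Zariski closure of $\nabla_m\bigl(\nabla_m(\underline Z(k))\bigr)$, and then proves Claim~\ref{nmclaim}: the Zariski-closed condition $\tau_m(\hat\pi^X_{m,0})(u)=\hat\pi^{\tau_mX}_{m,0}(u)$ holds on $\nabla_m\bigl(\nabla_m(X(k))\bigr)$, hence on its closure, hence at $\nabla_m(q)$, which forces $q=\nabla_m(p)$. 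You observe instead that this same condition holds identically on the image of $\hat\Delta_{(m,m)}$: both $\hat\pi^{\tau_mX}_{m,0}\circ\hat\Delta_{(m,m)}$ and $\tau_m(\hat\pi^X_{m,0})\circ\hat\Delta_{(m,m)}$ are induced by $\pi_{2m,m}$, by Definition~\ref{iterative-hasse-system}(a) with $(m',n')=(m,0)$ and $(0,m)$ respectively together with part (c), since $\pi_{(m,m),(m,0)}=\D_m(\pi_{m,0})$ induces $\hat\pi^{\tau_mX}_{m,0}$ and $\pi_{(m,m),(0,m)}=\pi^{\D_m}_{m,0}$ induces $\tau_m(\hat\pi^X_{m,0})$. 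Because $\nabla_m(q)\in Y_m(k)=\hat\Delta_{(m,m)}\bigl(Z_{2m}(k)\bigr)$, no density-of-rational-points argument is needed, and the section property of $\nabla$ plus its functoriality then give $q=\nabla_m(p)$ exactly as in the paper's claim. This buys something real: your argument never invokes richness, so it proves the proposition without that hypothesis (richness remains essential elsewhere, e.g.\ in Theorem~\ref{maintheorem1}, but not here). The bookkeeping you flag at the end---identifying $\hat\pi^{\tau_mX}_{n,0}$, $\tau_n(\hat\pi^X_{m,0})$, $\hat\Delta^{Z_m}_{(n,1)}$, etc.\ with the morphisms induced on prolongations by the corresponding maps of the composite system $\D_{(m,n)}=\D_m\D_n$---is precisely the functoriality from Section~4 of~\cite{paperA} that the paper's own proof also relies on, so nothing further is owed there.
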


\begin{proof}
There is an obvious candidate for $\underline Y$: set $\underline Y=(Y_n)$ where $Y_n$ is the image of $Z_{m+n}$ in $\tau_n(Z_m)$ under $\hat\Delta_{(m,n)}$.
Since $\hat\Delta_{(m,n)}$ is a closed embedding $Y_n$ is a closed subvariety of $\tau_n(Z_m)$.

We first show that $\underline Y$ is a dominant Hasse-Schmidt subvariety.
For the first condition we need to check that $\tau_{n+1}(Z_m)\to\tau_n(Z_m)$ induces a dominant map from $Y_{n+1}$ to $Y_n$.
But this follows from the fact that $\tau_{m+n+1} X\to\tau_{m+n} X$ induces a dominant map from $Z_{m+n+1}$ to $Z_{m+n}$, and from the compatibility of $\Delta$ with $\pi$ (cf. the commuting diagram in Definition~\ref{iterative-hasse-system}(a)).
The second condition requires us to confirm that $\hat \Delta_{(n,1)}^{Z_m}:\tau_{n+1}(Z_m)\to \tau\big(\tau_n(Z_m)\big)$ induces a map from $Y_{n+1}$ to $\tau(Y_n)$.
Unravelling definitions we see that it suffices to show that the following diagram commutes:
$$\xymatrix{
\tau_{m+n+1} X\ar[d]_{\hat\Delta_{(m,n+1)}^X}\ar[rr]^{\hat\Delta_{(m+n,1)}} && \tau\tau_{m+n} X\ar[d]^{\tau\big(\hat\Delta_{(m,n)}^X\big)} \\
\tau_{n+1}\tau_m X\ar[rr]^{\hat\Delta_{(n,1)}^{\tau_m X}} && \tau\tau_n\tau_m X
}$$
Reading the above diagram at the level of rings we see that it is a case of the associativity of $\Delta$ (cf. the commuting diagram in part (b) of Definition~\ref{iterative-hasse-system}).
Therefore, $\underline Y$ so defined is a dominant Hasse-Schmidt subvariety of $Z_m$.

Next we need to show that $\nabla_m\big(\underline Z(k)\big)=\underline Y(k)$.
First fix $p\in\underline Z(k)$.
Then $\nabla_m(p)\in Z_m\subseteq\tau_m X$ and so $\nabla_n\big(\nabla_m(p)\big)\in\tau_n\tau_m X$ for all $n$.
But $\nabla_n\big(\nabla_m(p)\big)=\hat\Delta_{(m,n)}\big(\nabla_{m+n}(p)\big)$.
Since $\nabla_{m+n}(p)\in Z_{m+n}$, $\nabla_n\big(\nabla_m(p)\big)\in Y_n$ for all $n$.
Hence $\nabla_m(p)\in\underline Y(k)$.
We have shown that $\nabla_m\big(\underline Z(k)\big)\subseteq\underline Y(k)$.

So far we have not used the assumption that $\underline{Z}$ has many $k$-rational points.
One consequence of this assumption is that $Y_n$ is the Zariski closure of $\nabla_n\big(\nabla_m\underline Z(k)\big)$, for all $n$.
Indeed, $Y_n$ is the image of $Z_{m+n}$ under the closed embedding $\hat\Delta_{(m,n)}$, $\nabla_n\big(\nabla_m(\underline Z(k))\big)$ is the image of $\nabla_{m+n}\big(\underline Z(k)\big)$ under the same map, and $\nabla_{m+n}\big(\underline Z(k)\big)$ is Zariski-dense in $Z_{m+n}$  by assumption.

It remains to show that if $q\in\underline Y(k)$ then $q\in\nabla_m\big(\underline Z(k)\big)$.
First note that it suffices to show that $q\in\nabla_m\big(X(k)\big)$.
Indeed, if $q=\nabla_m(p)$ then $\hat\Delta_{(m,n)}\big(\nabla_{m+n}(p)\big)=\nabla_n(q)\in Y_n$ for all $n$, so that $\nabla_{m+n}(p)\in Z_{m+n}$ for all $n$, which implies that $p\in\underline Z(k)$.
So we need to find $p\in X(k)$ such that $\nabla_m(p)=q$.
This will follow from the following claim

\begin{claim}
\label{nmclaim}
If $q\in \tau_m X(k)$ has the property that $\nabla_m(q)$ is contained in the Zariski closure of $\nabla_m\big(\nabla_m(X(k))\big)$, then $q=\nabla_m(p)$ for some $p\in X(k)$.
\end{claim}
\begin{proof}
Consider the commuting diagram
$$
\xymatrix{
\tau_m\tau_m X\ar[rr]^{\tau_m(\hat\pi^X_{m,0})}\ar[d]_{\hat\pi^{\tau_m X}_{m,0}} && \tau_m X\ar[d]^{\hat\pi^X_{m,0}}\\
\tau_m X\ar[rr]^{\hat\pi^X_{m,0}}&&X
}
$$
By the functoriality of $\nabla$ (cf. Proposition~4.7(a) of~\cite{paperA}) we have that
$\tau_m(\hat\pi^X_{m,0})\big(\nabla_m(q)\big)=\nabla_m\big(\hat \pi^X_{m,0}(q)\big)$.
So $q\in \nabla_m\big(X(k)\big)$ if and only if $\tau_m(\hat\pi^X_{m,0})\big(\nabla_m(q)\big)=q$.
But the latter identity says that $\nabla_m(q)$ satisfies a certain Zariski closed condition on $\tau_m(\tau_m X)$, namely the condition
$$\tau_m(\hat\pi^X_{m,0})(u)=\hat\pi^{\tau_m X}_{m,0}(u).$$
Since this condition is satisfied by all $u\in\nabla_m\big(\nabla_m(X(k))\big)$, and since $\nabla_m(q)$ is in the Zariski closure of $\nabla_m\big(\nabla_m(X(k))\big)$, we get that $q\in\nabla_m\big(X(k)\big)$, as desired.
\end{proof}

Now fix $q\in\underline Y(k)$.
So $\nabla_m(q)\in Y_m$, and the latter is in the Zariski closure of $\nabla_m\big(\nabla_m(X(k))\big)$ -- as it is the Zariski closure of $\nabla_m\big(\nabla_m(\underline Z(k))\big)$.
So by the claim, $\nabla_m(p)=q$ for some $p\in X(k)$, as desired.
This completes the proof of Proposition~\ref{nablanz}.
\end{proof}

\section{Generalised Hasse-Schmidt jet spaces}
\label{sec-hassejet}

\noindent
We intend to define a ``jet space'' associated to a point in a Hasse-Schmidt subscheme; it will be a linear Hasse-Schmidt subscheme of the jet space of the ambient algebraic variety at that point.
In the differential case, for finite-dimensional subvarieties, this was done by Pillay and Ziegler~\cite{pillayziegler03}, but their construction does not extend to infinite-dimensional differential varieties.
Staying with the differential setting for the moment, one might consider imitating the algebraic construction by defining the $n$th differential jet space of a differential variety at a point $p$ as the ``differential dual'' to the maximal differential ideal at $p$ modulo the $(n+1)$st power of that ideal.
This approach fails however, first because such a space is not naturally represented by a definable set in the langauge of differential rings, but also because they can be too small: they may not determine the differential variety.
This latter difficulty stems from non-noetherianity, or more specifically from the fact that there exist differential varieties with points that have the the property that the intersection of all the powers of the maximal ideal at the point is not trivial.\footnote{Phyllis Cassidy communicated to us the example of $x\delta^2x-\delta x=0$ at $x=0$.}
So neither the algebraic construction, nor the finite-dimensional differential construction of Pillay-Ziegler suggest extensions.
Our approach is to take the algebraic jet spaces of the sequence of algebraic schemes that define the Hasse-Schmidt subscheme, and then use this sequence to define a Hasse-Schmidt jet space.
In order to do so we make essential use of the interpolating map, which we discussed in Section~\ref{subsect-interp}, and which was developed in~\cite{paperA}.

Fix an iterative Hasse-Schmidt system $\underline\D$, an iterative $\underline \D$-ring $(k,E)$, a scheme $X$ of finite-type over $k$, a Hasse-Schmidt subscheme  $\underline Z=(Z_n)$ of $X$ over $k$, and a natural number $m$.
For each $n\in\mathbb{N}$, note that $\jet^mZ_n$ is a closed subscheme of $\jet^m\tau_n X$, and hence we can consider its scheme-theoretic image in $\tau_n\big(\jet^mX\big)$ under the interpolating map, which we denote by $T_n:=\overline{\phi_{m,n}^X(\jet^mZ_n)}$.
Recall that the scheme-theoretic image is just the smallest (with respect to closed embeddings) closed subscheme of the target through which the morphism factors.
Our ``overline'' notation is justified by the fact that when dealing with reduced schemes the scheme-theoretic image coincides with the induced reduced closed subscheme structure on the topological closure of the set-theoretic image.

\begin{lemma}
\label{djetisdvariety}
$\underline T:=\big(T_n:=\overline{\phi_{m,n}^X(\jet^mZ_n)}:n\in\mathbb{N}\big)$ is a Hasse-Schmidt subscheme of $\jet^mX$.
\end{lemma}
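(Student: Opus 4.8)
The plan is to verify directly the two defining conditions of Definition~\ref{hassevariety} for $\underline T=(T_n)$ as a $\underline\D$-subscheme of $\jet^mX$: that $\hat\pi_{n+1,n}^{\jet^mX}\colon\tau_{n+1}\jet^mX\to\tau_n\jet^mX$ restricts to a morphism $T_{n+1}\to T_n$, and that $\hat\Delta_{(n,1)}^{\jet^mX}\colon\tau_{n+1}\jet^mX\to\tau\tau_n\jet^mX$ restricts to a morphism $T_{n+1}\to\tau(T_n)$. The observation that unifies both verifications is that $T_n$ is the \emph{scheme-theoretic} image of the restriction $\phi_{m,n}^X|_{\jet^mZ_n}\colon\jet^mZ_n\to\tau_n\jet^mX$; since all the schemes in sight are of finite type over $k$, the corestriction $\jet^mZ_n\to T_n$ is schematically dominant. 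Combining this with the elementary fact that if $h$ is schematically dominant and $g\circ h$ factors through a closed subscheme $W$ then $g$ itself factors through $W$, it will suffice in each case to check the desired factorisation \emph{after} precomposing with $\jet^mZ_{n+1}\to T_{n+1}$. I will also use freely that $\jet^m$ and $\tau_\bullet$ are covariant functors carrying a ``restriction square'' to a restriction square (in particular preserving closed embeddings, cf.\ Proposition~4.6 of~\cite{paperA} and Section~5 there), and that the interpolating map of Section~\ref{subsect-interp} is natural in the scheme.

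For condition~(1): precomposing $\hat\pi_{n+1,n}^{\jet^mX}|_{T_{n+1}}$ with $\jet^mZ_{n+1}\to T_{n+1}$ gives, by the definition of $T_{n+1}$, the composite $\jet^mZ_{n+1}\xrightarrow{\phi_{m,n+1}^X}\tau_{n+1}\jet^mX\xrightarrow{\hat\pi_{n+1,n}^{\jet^mX}}\tau_n\jet^mX$, which by Proposition~\ref{phi-properties-hasse}(a) equals $\jet^mZ_{n+1}\xrightarrow{\jet^m(\hat\pi_{n+1,n})}\jet^m\tau_nX\xrightarrow{\phi_{m,n}^X}\tau_n\jet^mX$. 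Since $\underline Z$ is a $\underline\D$-subscheme, $\hat\pi_{n+1,n}$ restricts to $Z_{n+1}\to Z_n$, so by functoriality $\jet^m(\hat\pi_{n+1,n})$ carries $\jet^mZ_{n+1}$ into $\jet^mZ_n$; hence the composite factors through $\phi_{m,n}^X|_{\jet^mZ_n}$ and therefore through $T_n$. By schematic dominance of $\jet^mZ_{n+1}\to T_{n+1}$, we conclude that $\hat\pi_{n+1,n}^{\jet^mX}$ restricts to $T_{n+1}\to T_n$.

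Condition~(2) is the substantive one. Again precompose $\hat\Delta_{(n,1)}^{\jet^mX}|_{T_{n+1}}$ with $\jet^mZ_{n+1}\to T_{n+1}$ to obtain $\jet^mZ_{n+1}\xrightarrow{\phi_{m,n+1}^X}\tau_{n+1}\jet^mX\xrightarrow{\hat\Delta_{(n,1)}^{\jet^mX}}\tau\tau_n\jet^mX$, which by Proposition~\ref{phi-properties-hasse}(b) equals $\jet^mZ_{n+1}\xrightarrow{\jet^m(\hat\Delta_{(n,1)})}\jet^m\tau\tau_nX\xrightarrow{\phi_{m,\D_{(n,1)},E_{(n,1)}}}\tau\tau_n\jet^mX$. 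Because $\underline Z$ is a $\underline\D$-subscheme (Definition~\ref{hassevariety}(2), or Lemma~\ref{n=1enough}), $\hat\Delta_{(n,1)}$ restricts to $Z_{n+1}\to\tau Z_n$, so $\jet^m(\hat\Delta_{(n,1)})$ carries $\jet^mZ_{n+1}$ into $\jet^m(\tau Z_n)\subseteq\jet^m\tau\tau_nX$. Now invoke Proposition~\ref{phi-properties-hasse}(c) to factor $\phi_{m,\D_{(n,1)},E_{(n,1)}}=\tau(\phi_{m,n}^X)\circ\phi_{m,1}^{\tau_nX}$: naturality of the interpolating map applied to the closed embedding $Z_n\hookrightarrow\tau_nX$ shows that $\phi_{m,1}^{\tau_nX}$ carries $\jet^m(\tau Z_n)$ into $\tau(\jet^mZ_n)$; and since $\phi_{m,n}^X$ carries $\jet^mZ_n$ into $T_n$, functoriality of $\tau$ shows $\tau(\phi_{m,n}^X)$ carries $\tau(\jet^mZ_n)$ into $\tau(T_n)$. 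Chaining these three facts, the composite $\jet^mZ_{n+1}\to\tau\tau_n\jet^mX$ factors through the closed subscheme $\tau(T_n)$, and by schematic dominance of $\jet^mZ_{n+1}\to T_{n+1}$ we conclude that $\hat\Delta_{(n,1)}^{\jet^mX}$ restricts to $T_{n+1}\to\tau(T_n)$, completing the verification.

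The main obstacle is the bookkeeping in condition~(2): one has to interleave all three compatibilities of the interpolating map from Proposition~\ref{phi-properties-hasse} with the functoriality of $\jet^m$ and $\tau$, keeping careful track of which objects are being regarded as closed subschemes of which, while passing between set-theoretic and scheme-theoretic images. The schematic-dominance lemma is precisely what lets these factorisations be transported from $\jet^mZ_{n+1}$ up to $T_{n+1}$ without loss, so isolating and stating it cleanly at the outset is the key organisational step.
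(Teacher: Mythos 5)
Your proposal is correct and follows essentially the same route as the paper's proof: condition~(1) via functoriality of $\jet^m$ combined with Proposition~\ref{phi-properties-hasse}(a), and condition~(2) via the identity $\hat\Delta_{(n,1)}^{\jet^mX}\circ\phi_{m,n+1}^X=\tau(\phi_{m,n}^X)\circ\phi_{m,1}^{\tau_n X}\circ\jet^m(\hat\Delta_{(n,1)}^X)$ obtained from parts~(b) and~(c), then chaining the three inclusions. The only difference is that you make explicit, via the schematic-dominance lemma, the transport of the factorisation from $\jet^mZ_{n+1}$ up to the scheme-theoretic image $T_{n+1}$, a step the paper leaves implicit in the universal property of the scheme-theoretic image.
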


\begin{proof}
By functoriality, $\jet^m(\hat\pi_{n+1,n}):\jet^m\big(\tau_{n+1} X\big)\to\jet^m(\tau_n X)$ restricts to a map $\jet^m(Z_{n+1})\to\jet^mZ_n$.
Transforming this by the interpolating map (cf. part~(a) of Proposition~\ref{phi-properties-hasse}) yields that $\hat \pi_{n+1,n}^{\jet^mX}:\tau_{n+1}\jet^mX\to\tau_n\jet^mX$ restricts to a map from $T_{n+1}$ to $T_n$.
This proves the first condition of being a Hasse-Schmidt subscheme.

It remains to prove that for all $n\in\mathbb{N}$, $\hat\Delta_{n,1}^{\jet^mX}:\tau_{n+1}\jet^m X\to \tau\tau_n\jet^m X$ restricts to a map from $T_{n+1}$ to $\tau(T_n)$.
Parts~(b) and~(c) of Corollary~\ref{phi-properties-hasse} together give us the following compatibility of the interpolating map with $\Delta$:
$$\hat\Delta_{n,1}^{\jet^mX}\circ\phi_{m,n+1}^X=\tau(\phi_{m,n}^X)\circ\phi_{m,1}^{\tau_n X}\circ\jet^m(\hat\Delta_{n,1}^X).$$
Hence, to see where $\hat\Delta_{n,1}^{\jet^mX}$ takes $T_{n+1}=\overline{\phi_{m,n+1}^X(\jet^mZ_{n+1})}$, we can apply the right-hand-side of the above equality to $\jet^m Z_{n+1}$.
We have
$$\jet^m(\hat\Delta_{n,1}^X):\jet^m Z_{n+1}\to\jet^m\tau Z_n.$$
By functoriality of the interpolating map (Proposition~6.4(a) of~\cite{paperA}),
$$\phi^{\tau_n X}_{m,1}:\jet^m\tau Z_n\to\tau \jet^m Z_n.$$
Finally, since $\phi^X_{m,n}:\jet^m Z_n\to T_n$,
$$\tau(\phi^X_{m,n}):\tau\jet^m Z_n\to\tau T_n.$$
Hence,
$\hat\Delta_{n,1}^{\jet^mX}:T_{n+1}\to\tau T_n$, as desired.
\end{proof}

\begin{definition}[Hasse-Schmidt jet space]
\label{djetspace}
Suppose $\underline Z$ is a Hasse-Schmidt subscheme of $X$.
The {\em $m$th Hasse-Schmidt jet space} (or {\em $\underline\D$-jet space}) {\em of $\underline Z$} is the Hasse-Schmidt subscheme of $\jet^mX$ given by Lemma~\ref{djetisdvariety}.
That is,
$$\jet^m_{\underline\D}(\underline Z):=\big(\overline{\phi_{m,n}^X(\jet^mZ_n)}:n\in\mathbb{N}\big).$$
Given $a\in \underline{Z}(k)$ the {\em $m$th Hasse-Schmidt jet space of $\underline{Z}$ at $a$} is the Hasse-Schmidt subscheme of $\jet^m(X)_a$ given by
$$\jet^m_{\underline\D}(\underline Z)_a:=\big(\overline{\phi_{m,n}^X(\jet^mZ_n)}_{\nabla_n(a)}:n\in\mathbb{N}\big).$$
\end{definition}

Let us point out that for $a\in\underline{Z}(k)$, {\em $\jet^m_{\underline\D}(\underline Z)_a$ is indeed a Hasse-Schmidt subscheme of $\jet^m(X)_a$}.
As before, let $T_n:=\overline{\phi_{m,n}^X(\jet^mZ_n)}$
and let $(T_n)_{\nabla_n(a)}$ be the fibre over $\nabla_n(a)$ of $\tau_n\jet^mX\to \tau_nX$ restricted to $T_n$.
So $\jet^m_{\underline\D}(\underline Z)_a$ is given by the sequence $\big((T_n)_{\nabla_n(a)}:n\in\mathbb{N}\big)$.
First of all,
$(\tau_n\jet^mX)_{\nabla_n(a)}=\tau_n(\jet^m(X)_a)$
by Fact~\ref{nablafunctorialatpoints}, and so $(T_n)_{\nabla_n(a)}$ is a closed subscheme of the $n$th prolongation of $\jet^m(X)_a$.
Moreover, the structure morphism $\tau_{n+1}(\jet^m(X)_a)\to\tau_n(\jet^m(X)_a)$ is just the restriction of $\hat\pi_{n+1,n}:\tau_{n+1}\jet^mX\to\tau_n\jet^mX$.
Hence, as we have already shown that $T_{n+1}$ is sent to $T_n$, it follows from the functoriality of $\hat\pi$ (this is Proposition~4.8(b) of~\cite{paperA}) that $(T_{n+1})_{\nabla_{n+1}(a)}$ is sent to $(T_n)_{\nabla_n(a)}$.
A similar argument shows that $\big((T_n)_{\nabla_n(a)}:n\in\mathbb{N}\big)$ satisfies the second condition of being a Hasse-Schmidt subscheme, namely that $(T_{n+1})_{\nabla_{n+1}(a)}$ is sent to $\tau\big((T_n)_{\nabla_n(a)}\big)$ under the iterativity map $\hat\Delta_{(n,1)}$.

\begin{remark}
\label{jetfibreprop}
Suppose $a\in\underline{Z}(k)$.
\begin{itemize}
\item[(a)]
{\em $\jet^m_{\underline\D}(\underline Z)_a(k)$ is the fibre of $\jet^m_{\underline\D}(\underline Z)(k)$ over~$a$.}
That is
$$ \jet^m_{\underline\D}(\underline Z)_a(k)=\big\{\lambda\in\jet^m(X)_a(k):(a,\lambda)\in\jet^m_{\underline\D}(\underline Z)(k)\big\}.$$
Indeed, $\nabla_n(a,\lambda)\in T_n(k)$ if and only if $\nabla_n(\lambda)\in (T_n)_{\nabla_n(a)}(k)$.
\item[(b)]
{\em $\jet^m_{\underline\D}(\underline Z)_a$ is an irreducible Hasse-Schmidt subvariety of $\jet^m(X)_a$.}
Indeed, note that as $\jet^mZ_n$ is a linear subspace of $(\jet^m\tau_n X)|_{Z_n}$ over $Z_n$, and $\phi_{m,n}^X$ is a morphism of linear spaces over $\tau_n X$, the scheme-theoretic image $\overline{\phi^X_{m,n}(\jet^mZ_n)}$ is a linear subspace of $(\tau_n\jet^mX)|_{Z_n}$ over $Z_n$.
It follows that even though $\jet^mZ_n$ and $\overline{\phi^X_{m,n}(\jet^mZ_n)}$ are not necessarily reduced or irreducible, their fibres above points in $Z_n$, being vector groups, are necessarily reduced and irreducible.
\item[(c)]
{\em If $k$ is a field and $a\in X(k)$ is smooth, then $\jet^m_{\underline\D}(X)_a=\jet^m(X)_a$.}
Or more precisely, $\jet^m_{\underline\D}(\underline{X})_a=\underline{\jet^m(X)_a}$.
Indeed, by definition 
$\jet^m_{\underline\D}(\underline X)_a=\big(\overline{\phi^X_{m,n}\big(\jet^m(\tau_n X)\big)}_{\nabla_n(a)} \ : \ n\in\mathbb{N}\big)$.
But
$$\overline{\phi^X_{m,n}\big(\jet^m\tau_n X\big)}_{\nabla_n(a)}=(\tau_n\jet^mX)_{\nabla_n(a)}=\tau_n\big(\jet^m(X)_a\big)$$
where the first equality is by the surjectivity of the interpolating map at smooth points of $X$ (Fact~\ref{gensurjective}) and the second is Fact~\ref{nablafunctorialatpoints}.
\end{itemize}
\end{remark}

\subsection{Main results}
We establish in this section some fundamental properties of Hasse-Schmidt jet spaces, and prove also that they can be used to linearise Hasse-Schmidt subvarieties.
For these results we specialise to the case of varieties over fields and fix the following data:
an iterative Hasse-Schmidt system $\underline{\mathcal D}$, an iterative $\underline{\mathcal D}$-field $(k,E)$, an algebraic variety $X$ over $k$, a Hasse-Schmidt subvariety $\underline Z=(Z_n)$ of $X$ over $k$, and $m\geq 1$.

\begin{lemma}
\label{simpleatgeneral}
For sufficiently general $a\in\underline Z(k)$, and all $n\geq 0$,
$\phi_{m,n}^X$ restricts to a surjective (linear) morphism from $\jet^m(Z_n)_{\nabla_n(a)}$ to $\overline{\phi^X_{m,n}(\jet^mZ_n)}_{\nabla_n(a)}$.
More precisely, there exists a sequence of dense Zariski-open subsets $U_r\subseteq Z_r$, such that the above holds if $\nabla_r(a)\in U_r(k)$ for all $r$.

In particular, for such $a\in\underline Z(k)$,
$$\jet^m_{\underline\D}(\underline Z)_a(k)=\big\{\lambda\in\jet^m(X)_a(k):\nabla_n(\lambda)\in\phi^X_{m,n}\big(\jet^m(Z_n)_{\nabla_n(a)}(k)\big),\forall n\in\mathbb N\big\}.$$
\end{lemma}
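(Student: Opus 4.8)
The plan is to reduce the statement to a single general fact: \emph{a dominant morphism of linear spaces over a variety $S$ over $k$ restricts, over a dense Zariski-open subset of $S$, to a morphism which is surjective on the fibre over every point}. I will apply this with $S=Z_n$ for each $n$, take the resulting open sets as the $U_r$, and then extract the ``in particular'' clause by passing to $k$-points.

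So fix $n\geq 0$ and write $T_n:=\overline{\phi^X_{m,n}(\jet^m Z_n)}$. By Remark~\ref{jetfibreprop}(b), $\jet^m Z_n$ and $T_n$ are linear spaces over $Z_n$, and $\phi^X_{m,n}$ restricts to a morphism of linear spaces $g_n:\jet^m Z_n\to T_n$ over $Z_n$; in particular, on fibres $g_n$ is a linear map of vector groups. Since $\jet^m Z_n$ is of finite type over $k$, hence quasi-compact, the underlying space of the scheme-theoretic image $T_n$ is the closure of $g_n(\jet^m Z_n)$, so $g_n$ is dominant.

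To prove the general fact, let $g:L\to M$ be a dominant morphism of linear spaces over a variety $S$, with $L$ and $M$ the linear spaces of coherent sheaves $\mathcal F$ and $\mathcal G$ on $S$. Passing to each irreducible component of $S$ with the other components deleted, I may assume $S$ irreducible. By generic freeness there is a dense open $S'\subseteq S$ over which both $\mathcal F$ and $\mathcal G$ are locally free, so that over $S'$ the map $g$ restricts to a morphism of vector bundles; the rank of its fibre map is lower semicontinuous and hence attains its maximum $r$ on a dense open $S''\subseteq S'$. The basic observation is that for any nonempty open $W\subseteq S$ one has $g(L|_W)=g(L)\cap M|_W$, and since $g(L)$ is dense in $M$ and $M|_W$ is a nonempty open of $M$, $g(L|_W)$ is dense in $M|_W$; thus $g$ restricts to a dominant morphism $L|_W\to M|_W$. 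Taking $W=S''$: the fibre maps of $g$ over $S''$ have constant rank $r$, so their images fit together into the total space of a rank-$r$ subbundle of the vector bundle $M|_{S''}$, which is therefore a closed irreducible subset of $M|_{S''}$ of dimension $\dim S''+r$; being also dense in the irreducible scheme $M|_{S''}$, of dimension $\dim S''+\operatorname{rank}\mathcal G$, it must be all of $M|_{S''}$, forcing $r=\operatorname{rank}\mathcal G$. Hence $g$ is surjective on the fibre over every point of $S''$, and reassembling over the components of $S$ yields the required dense open $U\subseteq S$.

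Applying this with $L=\jet^m Z_n$, $M=T_n$, $g=g_n$, $S=Z_n$ gives a dense Zariski-open $U_n\subseteq Z_n$ over whose points $g_n$ is fibrewise surjective; these $U_n$ are the sets named in the statement. Thus, if $a\in\underline Z(k)$ satisfies $\nabla_r(a)\in U_r(k)$ for all $r$, then for every $n$ the map $\phi^X_{m,n}$ restricts to a surjective linear morphism $\jet^m(Z_n)_{\nabla_n(a)}\to\overline{\phi^X_{m,n}(\jet^m Z_n)}_{\nabla_n(a)}$, which is the first assertion. For the ``in particular'' clause, recall from Definition~\ref{djetspace} that
$$\jet^m_{\underline\D}(\underline Z)_a(k)=\big\{\lambda\in\jet^m(X)_a(k):\nabla_n(\lambda)\in(T_n)_{\nabla_n(a)}(k)\text{ for all }n\in\mathbb N\big\};$$
a surjective linear morphism of vector groups over the field $k$ is surjective on $k$-points, so $(T_n)_{\nabla_n(a)}(k)=\phi^X_{m,n}\big(\jet^m(Z_n)_{\nabla_n(a)}(k)\big)$ for every $n$, and substituting this into the display gives the stated description. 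The step demanding genuine care is the general fact about linear spaces, and within it the handling of scheme-theoretic images and quasi-compactness, the possible non-reducedness of $L$ and $M$ away from $S'$, and the reduction to the irreducible case; everything else is a formal unwinding of the definitions.
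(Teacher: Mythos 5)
Your proposal is correct, but it reaches the conclusion by a genuinely different route than the paper. The paper treats $\phi^X_{m,n}$ essentially as a black box until the last step: it notes that $\phi^X_{m,n}\big(\jet^mZ_n(k^{\alg})\big)$ is constructible, invokes the standard fact that for a constructible subset lying over $Z_n$ one can choose a dense open $U_n\subseteq Z_n$ over which taking fibres commutes with taking Zariski closures, and only then uses linearity to observe that the fibrewise image $\phi^X_{m,n}\big(\jet^m(Z_n)_{\nabla_n(a)}(k^{\alg})\big)$ is already closed, hence equals $\overline{\phi^X_{m,n}(\jet^mZ_n)}_{\nabla_n(a)}(k^{\alg})$. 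You instead exploit the linear structure throughout: generic freeness and lower semicontinuity of the fibrewise rank produce a dense open constant-rank locus, and the dimension count (a rank-$r$ image subbundle that is dense in the irreducible total space of $T_n|_{S''}$) forces $r$ to equal the full rank, giving fibrewise surjectivity directly. Both arguments are ``generic behaviour over $Z_n$'' arguments, and both must pass through the same two finishing points, which you handle correctly: reducedness of the fibres (Remark~\ref{jetfibreprop}(b)) to upgrade set-theoretic surjectivity to surjectivity of the morphisms of fibre schemes, and $k$-linearity to descend surjectivity to $k$-rational points for the ``in particular'' clause. Your route buys a reusable standalone lemma on dominant morphisms of linear spaces and avoids having to justify the generic compatibility of closures with fibres; the paper's route is shorter and needs neither the rank stratification nor the reduction to irreducible components.
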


\begin{proof}
As discussed earlier, the set of $k^{\alg}$-points of $\overline{\phi^X_{m,n}(\jet^mZ_n)}$ is the Zariski-closure of $\phi^X_{m,n}\big(\jet^mZ_n(k^{\alg})\big)$.
As the latter is a constructible set, the $U_n$ can be chosen so as to ensure that
$\overline{\phi^X_{m,n}(\jet^mZ_n)}_{\nabla_n(a)}(k^{\alg})$ is the Zariski closure of $\phi^X_{m,n}\big(\jet^m(Z_n)_{\nabla_n(a)}(k^{\alg})\big)$.
But $\phi^X_{m,n}\big(\jet^m(Z_n)_{\nabla_n(a)}(k^{\alg})\big)$ is already Zariski-closed since $\phi^X_{m,n}$ is linear on $\jet^m\big(\tau_n  X\big)_{\nabla_n(a)}$ and $\jet^m(Z_n)_{\nabla_n(a)}$ is a linear subvariety of $\jet^m\big(\tau_n  X\big)_{\nabla_n(a)}$.
We thus have
$$\overline{\phi^X_{m,n}(\jet^mZ_n)}_{\nabla_n(a)}(k^{\alg})=\phi^X_{m,n}\big(\jet^m(Z_n)_{\nabla_n(a)}(k^{\alg})\big).$$
As these are reduced schemes, this means that $\phi_{m,n}^X$ restricts to a surjective morphism from $\jet^m(Z_n)_{\nabla_n(a)}$ to $\overline{\phi^X_{m,n}(\jet^mZ_n)}_{\nabla_n(a)}$, as desired.
Moreover, by linearity over $k$, $\phi_{m,n}^X$ restricts to a surjective map between their sets of $k$-rational points also, so that
$$\overline{\phi^X_{m,n}(\jet^mZ_n)}_{\nabla_n(a)}(k)=\phi^X_{m,n}\big(\jet^m(Z_n)_{\nabla_n(a)}(k)\big).$$
Now the ``in particular'' clause follows since
$$\jet^m_{\underline\D}(\underline Z)_a(k)=\big\{\lambda\in\jet^m(X)_a(k):\nabla_n(\lambda)\in\overline{\phi^X_{m,n}(\jet^mZ_n)}_{\nabla_n(a)}(k),\forall n\in\mathbb N\big\}$$
by definition
\end{proof}

It is convenient at this point to formalise what we mean by ``sufficiently general''.

\begin{definition}[Good locus]
\label{goodlocus}
By the {\em good locus} of $\underline Z(k)$ let us mean those points $a\in\underline Z(k)$ satisfying the following three properties:
\begin{itemize}
\item[(1)]
$a$ is smooth point of $X$,
\item[(2)]
for all $n\geq 0$, $\hat\pi^X_{n+1,n}$ restricted to $Z_{n+1}$ is smooth at $\nabla_{n+1}(a)$, and
\item[(3)]
the conclusion of Lemma~\ref{simpleatgeneral} holds.
\end{itemize}
\end{definition}

Our jet spaces are only (provably) well-behaved at points in the good locus.
Loosely speaking, at least in the well-behaved contexts that we are most interested in, a ``generic'' point on a ``generic'' Hasse-Schmidt subvariety of $X$ will be in the good locus.
To make this precise, note first of all that as long as $Z_0$ has nonempty intersection with the smooth locus of $X$, there exists a sequence of dense Zariski-open subsets $U_r\subseteq Z_r$ such that $a\in X(k)$ is in the good locus of $\underline Z(k)$ if and only if $\nabla_r(a)\in U_r(k)$ for all $r\geq 0$.
So, if in addition $(k,E)$ is rich (Definition~\ref{rich}), then the good locus is not empty.
Recall that if $\underline\D$ extends to fields then we can achieve richness by passing to an extension (Corollary~\ref{densepoints}).

We have not yet dealt with the question of when the Hasse-Schmidt jet space is a {\em dominant} Hasse-Schmidt subvariety.
Without dominance our jet spaces may not have enough rational points in any extension of $(k,E)$.
Unfortunately, Hasse-Schmidt jet spaces of dominant Hasse-Schmidt subvarieties need not themselves be dominant, as the following example demonstrates.

\begin{example}
\label{nondomdifferential}
We consider (ordinary) iterative Hasse-Schmidt differential fields.
That is, we are working in the Hasse-Schmidt system $\operatorname{HSD}_1$ and we have an iterative Hasse-Schmidt differential field $\big(k,{\bf D}=(D_0,D_1,\dots)\big)$.
(See Example~\ref{differential} and Proposition~\ref{iterativedifferential}).
Suppose $\operatorname{char}(k)=p>0$ and consider the Hasse-Schmidt subvariety of the affine line defined by $\big(D_1(x)\big)^p=x$.
That is, let $\underline Z$ be the dominant Hasse-Schmidt subvariety of $\mathbb{A}^1$ obtained by applying Lemma~\ref{makehasse} to the sequence $(Y_n)$ where $Y_1$ is given by $y^p=x$ in $\tau_1(\mathbb{A}^1)=\spec(k[x,y])$, and $Y_i=\tau_i(\mathbb{A}^1)$ for all $i\neq 1$.
Then $Z_0=Y_0=\spec(k[x])$ and $Z_1=Y_1=\spec\big(k[x,y]/(y^p-x)\big)$.
Note that $Z_1\to Z_0$ is inseparable.
Now, since the algebraic tangent space coincides with the first algebraic jet space, and since the interpolating map from the prolongation of a tangent space to the tangent space of a prolongation is the identity (i.e. prolongations commute with tangent spaces), we get that $\jet_\D^1(\underline Z)$ is given by the sequence of tangent spaces $(TZ_n:n<\omega)$.
A straightforward calculation shows that $TZ_1\to TZ_0$ is not dominant, and so $\jet_\D^1(\underline Z)$ is not a dominant Hasse-Schmidt subvariety of $T\mathbb{A}^1$.
In fact, for any nonzero $a\in k$, the Hasse-Schmidt jet space at $a$, $\jet_\D^1(\underline Z)_a$, is not a dominant Hasse-Schmidt subvariety of $(T\mathbb{A}^1)_a$.
This is ultimately due to the inseparability of the morphism $\hat\pi_{1,0}:Z_{1}\to Z_0$.
The following proposition explains that such inseparability is the only obstacle.
\end{example}

\begin{proposition}
\label{djet-properties}
Suppose $\underline Z$ is a dominant and separable $\underline\D$-subvariety over $k$.
Then $\jet^m_{\underline\D}(\underline Z)$ is dominant.
Moreover, for $a\in\underline{Z}(k)$ in the good locus, $\jet^m_{\underline \D}(\underline Z)_a$ is dominant.
\end{proposition}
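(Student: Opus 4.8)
The plan is to reduce both assertions, via the compatibility of the interpolating map with transition morphisms (Proposition~\ref{phi-properties-hasse}(a)), to statements about $\jet^m$ applied to the transition maps $\hat\pi_{n+1,n}\colon Z_{n+1}\to Z_n$, which are dominant and separable by hypothesis. Write $T_n:=\overline{\phi_{m,n}^X(\jet^mZ_n)}$, so that $\jet^m_{\underline\D}(\underline Z)=(T_n)$. Restricting the square of Proposition~\ref{phi-properties-hasse}(a) (with $n+1,n$ in place of $n,n'$) to $\jet^mZ_{n+1}$ gives
$$\hat\pi^{\jet^mX}_{n+1,n}\big(\phi_{m,n+1}^X(\jet^mZ_{n+1})\big)=\phi_{m,n}^X\big(\jet^m(\hat\pi_{n+1,n})(\jet^mZ_{n+1})\big);$$
hence if $\jet^m(\hat\pi_{n+1,n})$ restricts to a \emph{dominant} morphism $\jet^mZ_{n+1}\to\jet^mZ_n$, then, taking closures and using $T_n=\overline{\phi^X_{m,n}(\jet^mZ_n)}$, the morphism $\hat\pi^{\jet^mX}_{n+1,n}$ restricts to a dominant morphism $T_{n+1}\to T_n$, which is the first assertion. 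For the second assertion we instead pass to fibres over $\nabla_{n+1}(a)$, $\nabla_n(a)$ and show the fibre maps are in fact surjective.

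The engine for both is the interaction of $\jet^m$ with smoothness. A dominant separable morphism of varieties is smooth over a pair of dense open subsets (generic smoothness for separable morphisms), and $\jet^m$ carries a smooth morphism $f\colon U'\to U$ to a morphism of linear spaces induced by a locally split injection $f^*(\mathcal I_U/\mathcal I_U^{m+1})\hookrightarrow \mathcal I_{U'}/\mathcal I_{U'}^{m+1}$ of sheaves of principal parts; in particular $\jet^m(f)$ is surjective whenever $f$ is. Pointwise: if $f$ is smooth at $u'$ and $\kappa(u')=\kappa(u)$, where $u=f(u')$, then $\widehat{\mathcal O}_{U',u'}\cong\widehat{\mathcal O}_{U,u}[[t_1,\dots,t_r]]$, so $\mathfrak m_{U,u}/\mathfrak m_{U,u}^{m+1}$ is a direct summand of $\mathfrak m_{U',u'}/\mathfrak m_{U',u'}^{m+1}$ and dually $\jet^m(U')_{u'}\to\jet^m(U)_u$ is (split) surjective. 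Applying generic smoothness to the separable $\hat\pi_{n+1,n}$ together with the above yields that $\jet^m(\hat\pi_{n+1,n})$ is dominant, and so — modulo the point discussed below — $\jet^m_{\underline\D}(\underline Z)$ is dominant.

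For the ``moreover'' clause, fix $a\in\underline Z(k)$ in the good locus and $n\in\mathbb N$. By good-locus condition~(2), $\hat\pi_{n+1,n}\upharpoonright Z_{n+1}$ is smooth at $\nabla_{n+1}(a)$; since $\nabla_n(a),\nabla_{n+1}(a)$ are $k$-points, the pointwise fact above shows that $\jet^m(\hat\pi_{n+1,n})$ restricts to a surjection $\jet^m(Z_{n+1})_{\nabla_{n+1}(a)}\twoheadrightarrow\jet^m(Z_n)_{\nabla_n(a)}$. By good-locus condition~(3) (the conclusion of Lemma~\ref{simpleatgeneral}), $\phi_{m,n}^X$ restricts to a surjection $\jet^m(Z_n)_{\nabla_n(a)}\twoheadrightarrow (T_n)_{\nabla_n(a)}$, and likewise at level $n+1$. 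Now restrict the commuting square of Proposition~\ref{phi-properties-hasse}(a) to the fibres over $\nabla_{n+1}(a)$ and $\nabla_n(a)$ — legitimate since $\phi_{m,n}^X$ is a morphism of linear spaces over $\tau_nX$ and both copies of $\hat\pi$ cover $\hat\pi_{n+1,n}$, with $\hat\pi_{n+1,n}\circ\nabla_{n+1}=\nabla_n$. In the resulting square the composite $\phi_{m,n}^X\circ\jet^m(\hat\pi_{n+1,n})\colon \jet^m(Z_{n+1})_{\nabla_{n+1}(a)}\to (T_n)_{\nabla_n(a)}$ is surjective (a composite of surjections) and equals $\hat\pi^{\jet^mX}_{n+1,n}\circ\phi_{m,n+1}^X$; since $\phi_{m,n+1}^X$ maps onto $(T_{n+1})_{\nabla_{n+1}(a)}$, the map $\hat\pi^{\jet^mX}_{n+1,n}\colon (T_{n+1})_{\nabla_{n+1}(a)}\to (T_n)_{\nabla_n(a)}$ is surjective, hence dominant. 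Letting $n$ vary gives that $\jet^m_{\underline\D}(\underline Z)_a$ is dominant.

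The delicate point is the first assertion: one must ensure that dominance really descends from $\jet^m(\hat\pi_{n+1,n})$ being dominant, for which one needs that $\jet^mZ_n$ and the image-closures $T_n$ are governed by their restrictions to a dense open of $Z_n$ over which everything is a vector bundle — $\jet^mZ_n$ being possibly non-reduced and reducible, with spurious components over the singular locus of $Z_n$, and similarly $T_n$. This is exactly where separability does its work (it forces the transition map on jets to be split surjective over the generic point of $Z_n$), consistent with the preceding Example exhibiting inseparability as the sole obstruction. The bookkeeping is most transparent after replacing $(k,E)$ by a rich iterative $\underline\D$-field extension (Corollary~\ref{densepoints}, available when $\underline\D$ extends to fields, and harmless in any case since prolongations, jet spaces, and the interpolating map all commute with base field extension), so that the good locus is Zariski-dense in each $Z_n$ and the first assertion follows from the fibrewise one proved above; absent that, one runs the fibrewise argument directly at the generic points of the $Z_n$. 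Thus the main obstacle is packaging the $\jet^m$–smoothness interaction correctly, not any single hard computation.
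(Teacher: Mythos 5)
Your proof is correct and follows essentially the same route as the paper: the commuting square from Proposition~\ref{phi-properties-hasse}(a), dominance of $\jet^m(\hat\pi_{n+1,n})$ on the $\jet^m Z_n$'s coming from separability, and surjectivity of the vertical arrows on fibres from the good-locus conditions and Lemma~\ref{simpleatgeneral}. The only real difference is that the paper dispatches both the dominance of $\jet^m(\hat\pi_{n+1,n})$ and its fibrewise surjectivity over smooth points by citing Lemma~5.9 of~\cite{paperA}, so the worry in your final paragraph does not arise: since the vertical arrows $\jet^m Z_n\to T_n$ are dominant by the very definition of scheme-theoretic image, the dominant composite $\jet^m Z_{n+1}\to T_n$ factors through $T_{n+1}$ and dominance of $T_{n+1}\to T_n$ follows at once, with no need to pass to a rich extension.
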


\begin{proof}
As before let $T_n:=\overline{\phi_{m,n}^X(\jet^mZ_n)}$ be the scheme-theoretic image of $\jet^mZ_n$ under $\phi_{m,n}^X$.
The commuting diagram in part~(a) of Corollary~\ref{phi-properties-hasse} restricts to
$$\xymatrix{
\jet^mZ_{n+1}\ar[d]^{\phi^X_{m,n+1}}\ar[rr]^{\jet^m(\hat\pi^X_{n+1,n})} & & \jet^mZ_n\ar[d]^{\phi^X_{m,n}}\\
T_{n+1}\ar[rr]^{\hat\pi_{n+1,n}^{\jet^mX}} && T_n
}$$
Now $\hat\pi_{n+1,n}:Z_{n+1}\to Z_n$ is dominant and separable by assumption.
It follows that $\jet^m(\hat\pi_{n+1,n}):\jet^mZ_{n+1}\to\jet^mZ_n$ is dominant (cf. Lemma~5.9 of~\cite{paperA}).
As the two vertical arrows are also dominant by definition, so is $T_{n+1}\to T_n$, as desired.

For the ``moreover'' clause, we restrict the above diagram to $a\in\underline Z(k)$:
$$\xymatrix{
\jet^m(Z_{n+1})_{\nabla_{n+1}(a)}\ar[d]^{\phi^X_{m,n+1}}\ar[rr]^{\ \ \jet^m(\hat\pi^X_{n+1,n})} & & \jet^m(Z_n)_{\nabla_n(a)}\ar[d]^{\phi^X_{m,n}}\\
(T_{n+1})_{\nabla_{n+1}(a)}\ar[rr]^{\hat\pi_{n+1,n}^{\jet^mX}} && (T_n)_{\nabla_n(a)}
}$$
Now, the separability and dominance of $\hat\pi_{n+1,n}:Z_{n+1}\to Z_n$ imply not only the dominance of $\jet^m(\hat\pi_{n+1,n}):\jet^mZ_{n+1}\to\jet^mZ_n$, but also the surjectivity of that map restricted to fibres above smooth points of $\hat\pi_{n+1,n}$ (see for example the proof of Lemma~5.9 of~\cite{paperA}).
This smoothness is also guaranteed by being in the good locus (condition~(2) of Definition~\ref{goodlocus}).
So the top horizontal arrow is surjective.
On the other hand, Lemma~\ref{simpleatgeneral} tells us that the two vertical arrows are also surjective.
Hence $(T_{n+1})_{\nabla_{n+1}(a)}\to (T_{n})_{\nabla_n(a)}$ is surjective for all $n$, implying that $\jet^m_{\underline\D}(\underline Z)_a$ is a dominant Hasse-Schmidt subvariety.
\end{proof}

The following theorem says that the Hasse-Schmidt jet spaces do indeed accomplish the task for which they were constructed.

\begin{theorem}
\label{maintheorem1}
Hasse-Schmidt subvarieties are determined by their jets:
Suppose
\begin{itemize}
\item
$(k,E)$ is a rich iterative $\underline\cD$-field,
\item
$X$ is an algebraic variety over $k$,
\item
$\underline Z$ and $\underline Z'$ are irreducible separable dominant $\underline\cD$-subvarieties of $X$ over $k$,
\item
and $a\in\underline Z(k)\cap\underline Z'(k)$ is in the good locus of both $\underline Z$ and $\underline Z'$.
\end{itemize}
If $\jet^m_{\underline\D}(\nabla_r\underline Z)_{\nabla_r(a)}(k)= \jet^m_{\underline\D}(\nabla_r\underline Z')_{\nabla_r(a)}(k)$ for all $m\geq 1$ and $r\geq 0$,
then $\underline Z=\underline Z'$.
\end{theorem}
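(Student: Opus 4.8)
The plan is to reduce the statement, level by level, to the algebraic principle recorded in Section~\ref{subsect-interp}: an irreducible subvariety of an algebraic variety is determined by its jet spaces at a common rational point. Since $\underline Z=(Z_n)$ and $\underline Z'=(Z'_n)$, it is enough to prove $Z_r=Z'_r$ for every $r\ge 0$. Fixing $r$, the schemes $Z_r$ and $Z'_r$ are irreducible subvarieties of $\tau_r X$ sharing the $k$-point $\nabla_r(a)$ (because $a\in\underline Z(k)\cap\underline Z'(k)$), so by that principle it suffices to show $\jet^m(Z_r)_{\nabla_r(a)}=\jet^m(Z'_r)_{\nabla_r(a)}$ for all $m$; the case $m=0$ is trivial, so take $m\ge 1$.

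The core of the proof will be to recover the algebraic jet space $\jet^m(Z_r)_{\nabla_r(a)}$ from the Hasse-Schmidt jet space $\jet^m_{\underline\D}(\nabla_r\underline Z)_{\nabla_r(a)}$; precisely, I would show that $\jet^m(Z_r)_{\nabla_r(a)}$ is the Zariski closure of $\jet^m_{\underline\D}(\nabla_r\underline Z)_{\nabla_r(a)}(k)$, and likewise with $\underline Z'$ in place of $\underline Z$. Granting this, the hypothesis $\jet^m_{\underline\D}(\nabla_r\underline Z)_{\nabla_r(a)}(k)=\jet^m_{\underline\D}(\nabla_r\underline Z')_{\nabla_r(a)}(k)$ forces $\jet^m(Z_r)_{\nabla_r(a)}=\jet^m(Z'_r)_{\nabla_r(a)}$ (each side being that common closure), and we are done. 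For the recovery statement, note that by Proposition~\ref{nablanz} the $0$th component of $\nabla_r\underline Z$ is $Z_r$ itself, and since $\phi^{Z_r}_{m,0}$ is the identity, the $0$th component of $\jet^m_{\underline\D}(\nabla_r\underline Z)$ is $\jet^m Z_r$; passing to fibres over $\nabla_r(a)$, the $0$th component of $\jet^m_{\underline\D}(\nabla_r\underline Z)_{\nabla_r(a)}$ is $\jet^m(Z_r)_{\nabla_r(a)}$. Hence $\jet^m_{\underline\D}(\nabla_r\underline Z)_{\nabla_r(a)}(k)\subseteq\jet^m(Z_r)_{\nabla_r(a)}(k)$, and it remains only to show these $k$-points are Zariski-dense in $\jet^m(Z_r)_{\nabla_r(a)}$.

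This density is where richness enters. By Remark~\ref{jetfibreprop}(b) the Hasse-Schmidt subvariety $\jet^m_{\underline\D}(\nabla_r\underline Z)_{\nabla_r(a)}$ of $\jet^m(Z_r)_{\nabla_r(a)}$ is irreducible; if I can also show it is \emph{dominant}, then Definition~\ref{rich} gives that its $k$-points are Zariski-dense in every component, in particular in the $0$th component $\jet^m(Z_r)_{\nabla_r(a)}$. Dominance I would obtain from Proposition~\ref{djet-properties}, whose hypotheses I would check as follows: $\nabla_r\underline Z$ is dominant by Proposition~\ref{nablanz}; it is separable because its $n$th component is $\hat\Delta_{(r,n)}(Z_{r+n})\subseteq\tau_n Z_r$ with $\hat\Delta_{(r,n)}$ a closed embedding, and compatibility of $\Delta$ with $\pi$ (Definition~\ref{iterative-hasse-system}(a)) identifies its transition morphisms with the $\hat\pi^X_{r+n+1,r+n}\colon Z_{r+n+1}\to Z_{r+n}$, which are separable by hypothesis; and $\nabla_r(a)$ lies in the good locus of $\nabla_r\underline Z$, which I would derive from $a$ lying in the good locus of $\underline Z$ — under the identification above the smoothness of the transition maps of $\nabla_r\underline Z$ at the relevant $\nabla$-points is condition~(2) of the good locus for $\underline Z$ at shifted levels, the smoothness of $\nabla_r(a)$ as a point of $Z_r$ follows from conditions~(1)--(2), and the conclusion of Lemma~\ref{simpleatgeneral} transfers using the linearity of the interpolating maps on fibres together with Fact~\ref{nablafunctorialatpoints}.

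The step I expect to be the genuine obstacle is precisely this last one: checking that the good-locus hypotheses on $(\underline Z,a)$ are inherited by $(\nabla_r\underline Z,\nabla_r(a))$ — most delicately the smoothness of $\nabla_r(a)$ as a point of $Z_r$ and the persistence of the Lemma~\ref{simpleatgeneral} genericity — which it may be cleanest to isolate as a preliminary lemma. Everything else is a direct appeal to the cited results or formal bookkeeping with the interpolating map and with fibres of prolongations.
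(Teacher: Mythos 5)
Your proposal is correct and follows essentially the same route as the paper's proof: reduce to the algebraic fact that irreducible subvarieties are determined by their jet spaces at a point, identify the $0$th component of $\jet^m_{\underline\D}(\nabla_r\underline Z)_{\nabla_r(a)}$ with $\jet^m(Z_r)_{\nabla_r(a)}$, and obtain Zariski-density of the $k$-points from richness via Proposition~\ref{nablanz}, Remark~\ref{jetfibreprop}(b), and Proposition~\ref{djet-properties}, transferring separability and the good locus to $\nabla_r\underline Z$ through the closed embeddings $\hat\Delta_{(r,n)}$. The good-locus transfer you flag as delicate is handled in the paper by exactly the identification you describe, stated without a separate lemma.
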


Note that if $\underline\cD$ extends to fields, as it does in the differential  and difference cases (cf. Corollary~\ref{extendtofielddifferential} and Proposition~\ref{difference-example}), then by Corollary~\ref{densepoints} the hypothesis of richness in the above theorem can always be satisfied by passing to an extension.

\begin{proof}[Proof of Theorem~\ref{maintheorem1}]
Recall that $\nabla_r\underline Z$ is a dominant Hasse-Schmidt subvariety of $\tau_rX$ whose $k$-points are $\nabla_r\big(\underline Z(k)\big)$.
Its existence and dominance is ensured by Proposition~\ref{nablanz}
(this already requires richness of $(k,E)$ and dominance of $\underline Z$),
and by construction it's defining sequence is $\big(\hat\Delta_{(r,n)}(Z_{r+n})\subseteq\tau_n\tau_rX :n\in\mathbb N\big)$.
These subvarieties, together with the morphisms obtained by restricting $\hat\pi_{n+1,n}^{\tau_rX}$, are isomorphic to $(Z_{r+n}:n\in\mathbb N)$ with the restrictions of $\hat\pi^X_{r+n+1,r+n}$.
Hence $\nabla_r\underline Z$ is also separable, and $\nabla_r(a)$ is also in the good locus of $\nabla_r\underline Z$.

Fixing $r$ and taking jet spaces at $\nabla_r(a)$,
we have that $\jet^m_{\underline\D}(\nabla_r\underline Z)_{\nabla_r(a)}$ is a dominant Hasse-Schmidt subvariety by Proposition~\ref{djet-properties}.
We have already pointed out, in Remark~\ref{jetfibreprop}(b), that it is irreducible.
Hence, $k$ being rich, the $k$-points of $\jet^m_{\underline\D}(\nabla_r\underline Z)_{\nabla_r(a)}$ are Zariski-dense in
\begin{eqnarray*}
\big[\jet^m_{\underline\D}(\nabla_r\underline Z)_{\nabla_r(a)}\big]_0
&=&
\jet^m\big((\nabla_r\underline Z)_0\big)_{\nabla_r(a)}\\
&=&
\jet^m(Z_r)_{\nabla_r(a)}
\end{eqnarray*}
where the first equality is by definition of the Hasse-Schmidt jet space (as $\phi_{m,0}^{\tau_r X}=\id$).
So $\jet^m_{\underline\D}(\nabla_r\underline Z)_{\nabla_r(a)}(k)$ is Zariski-dense in $\jet^m(Z_r)_{\nabla_r(a)}$.
Similarly for $\underline Z'$.
Since $\jet^m_{\underline\D}(\nabla_r\underline Z)_{\nabla_r(a)}(k)= \jet^m_{\underline\D}(\nabla_r\underline Z')_{\nabla_r(a)}(k)$ by assumption, taking Zariski-closures, we get
$\jet^m(Z_r)_{\nabla_r(a)}= \jet^m(Z'_r)_{\nabla_r(a)}$,
 for all $m\in\mathbb{N}$.
Since the (algebraic) jet spaces of an irreducible algebraic subvariety at a point determine that subvariety, $Z_r=Z_r'$ for all $r\in\mathbb{N}$.
Hence $\underline Z=\underline Z'$.
\end{proof}

\begin{question}
Can the assumption that $a$ lies in the good locus be dropped?
\end{question}

\subsection{Canonical bases in differentially closed fields}
\label{differentialsection}
Let us specialise to the differential context and extract the model-theoretic content of the above theorem.
For this section we assume familiarity with basic notions and notations from stability theory such as those of canonical bases and definable closure.

First recall the setting.
We work with the generalised Hasse-Schmidt system $\hsd_e$ of Example~\ref{differential}, which extends to fields by Corollary~\ref{extendtofielddifferential}.
As a model-theoretic structure an iterative $\operatorname{HSD}_e$-field is just a field equipped with $e$ commuting iterative Hasse-Schmidt derivations (Proposition~\ref{iterativedifferential}).
Note that if $K$ is such a field, then the Kolchin closed subsets of $K^n$, or rather the countable intersections of such, are exactly the sets of the form $\underline Z(K)$ where $\underline Z$ is a dominant affine $\hsd_e$-variety (see Example~\ref{makehassedifferential}).
The class of existentially closed iterative Hasse-Schmidt differential fields of characteristic $p$ is elementary, and is axiomatised by the complete theory $\operatorname{SCH}_{p,e}$ (see~\cite{ziegler03}).
We allow the possibility of $p=0$, in which case this is just the theory of differentially closed fields of characteristic zero in $e$ commuting (usual) derivations.
It is not hard to see that if $K\models\operatorname{SCH}_{p,e}$ is an $\aleph_1$-saturated model then it is rich, and so dominant $\hsd_e$-subvarieties are uniquely determined by their $K$-points.
If $k\subseteq K$ is an $\operatorname{HSD}_e$-subfield and $a\in K^n$, then the {\em $\hsd_e$-locus of $a$ over $k$} is the irreducible dominant $\hsd_e$-subvariety $\underline Z=(Z_r)$ of $\mathbb A_K^n$ where $Z_r$ is the Zariski-locus of $\nabla_r(a)$ over $k$.
By quantifier elimination, $\tp(a/k)$ just says that $a$ is in $\underline Z(K)$ but not in any proper Kolchin closed subset over $k$.
If $k$ is relatively algebraically closed in $K$ then this type is stationary, and the stability-theoretic {\em canonical base} of $\tp(a/k)$ is nothing other than the $\hsd_e$-subfield of $k$ generated by the minimal fields of definition of all the $Z_r$.

\begin{corollary}
\label{differentialmain}
Suppose $K\models\operatorname{SCH}_{p,e}$ is saturated, $k\subseteq K$ is a relatively algebraically closed $\operatorname{HSD}_e$-subfield of cardinality less than $|K|$, $a\in K^n$, and $\underline Z=\hsd_e\operatorname{-locus}(a/k)\subseteq \mathbb A_K^n$.
If $\underline Z$ is separable then 
$$\cb(a/k)\subseteq\dcl\left(\{a\}\cup\bigcup_{m\geq 1,r\geq 0}\jet_{\operatorname{HSD}_e}^m(\nabla_r\underline Z)_{\nabla_r(a)}(K)\right)$$
When $p=0$ only one jet space is required: there exist $m\geq 1$ and $r\geq 0$ such that  $\cb(a/k)\subseteq\dcl\big(a,\jet_{\operatorname{HSD}_e}^m(\nabla_r\underline Z)_{\nabla_r(a)}(K)\big)$.
\end{corollary}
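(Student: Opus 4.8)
The plan is to argue through the standard automorphism characterisation of canonical bases. I would first pass to a monster model $\mathbb{M}\succeq K$; since $|B|\le|K|<|\mathbb{M}|$, an element of $\mathbb{M}$ lies in $\dcl(B)$ iff it is fixed by every $\sigma\in\operatorname{Aut}(\mathbb{M}/B)$, and $\dcl(B)$ is an $\hsd_e$-subfield of $\mathbb{M}$. By the description of canonical bases recalled just before the corollary, $\cb(a/k)$ is the $\hsd_e$-subfield generated by the minimal fields of definition of the algebraic varieties $Z_r$, and a field automorphism fixing $Z_r$ as a variety fixes its minimal field of definition pointwise. So it suffices to prove that every $\sigma\in\operatorname{Aut}(\mathbb{M}/B)$ satisfies $\sigma(\underline Z)=\underline Z$, i.e. $\sigma(Z_r)=Z_r$ for all $r$.

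Fixing such a $\sigma$, I note $\sigma(a)=a$, hence $\sigma(\nabla_r(a))=\nabla_r(a)$, and since the $\hsd_e$-locus is a canonical construction, $\underline Z':=\sigma(\underline Z)=\hsd_e\operatorname{-locus}(a/\sigma(k))$; being the $\sigma$-image of $\underline Z$ it is again an irreducible dominant separable $\hsd_e$-subvariety of $\mathbb{A}^n$, now over $\sigma(k)\subseteq\mathbb{M}$, and $a\in\underline Z(\mathbb{M})\cap\underline Z'(\mathbb{M})$. The dense Zariski-open subsets of the $Z_r$ that cut out the good locus of $\underline Z$ (the discussion following Definition~\ref{goodlocus}) can be taken over $k$, so the $k$-genericity of $\nabla_r(a)$ in $Z_r$ places $a$ in the good locus of $\underline Z$, and applying $\sigma$ places it in the good locus of $\underline Z'$. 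Being $\aleph_1$-saturated, $\mathbb{M}$ is rich, so Theorem~\ref{maintheorem1} applies over $\mathbb{M}$ and reduces the task to proving
$$\jet^m_{\hsd_e}(\nabla_r\underline Z)_{\nabla_r(a)}(\mathbb{M})=\jet^m_{\hsd_e}(\nabla_r\underline Z')_{\nabla_r(a)}(\mathbb{M})\quad\text{for all }m\ge1,\ r\ge0.$$

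To get this, I would use that $\sigma$, being an $\hsd_e$-automorphism, commutes with the prolongation functors, with $\hat\Delta$, with the interpolating maps, and hence with the operations $\underline Z\mapsto\nabla_r\underline Z$ and $\underline Z\mapsto\jet^m_{\hsd_e}(\underline Z)$; since it fixes $\nabla_r(a)$ it therefore carries $\jet^m_{\hsd_e}(\nabla_r\underline Z)_{\nabla_r(a)}$ onto $\jet^m_{\hsd_e}(\nabla_r\underline Z')_{\nabla_r(a)}$. On the other hand $\sigma$ fixes the set $\jet^m_{\hsd_e}(\nabla_r\underline Z)_{\nabla_r(a)}(K)$ pointwise, as it lies in $B$; this Hasse-Schmidt subvariety is dominant (Proposition~\ref{djet-properties}, since $\nabla_r\underline Z$ is separable and dominant with $\nabla_r(a)$ in its good locus) and irreducible (Remark~\ref{jetfibreprop}(b)), so by richness of $K$ the $\nabla_n$-images of its $K$-points are Zariski-dense in each of its levels; as $\sigma$ fixes these images pointwise it fixes each level set-theoretically, hence fixes $\jet^m_{\hsd_e}(\nabla_r\underline Z)_{\nabla_r(a)}$ itself. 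The two observations together give the displayed equality (a fortiori on $\mathbb{M}$-points), and Theorem~\ref{maintheorem1} then yields $\underline Z=\underline Z'$, so $\sigma(Z_r)=Z_r$ for all $r$, which finishes the first part.

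For the $p=0$ refinement (where $\operatorname{SCH}_{0,e}$ is the theory of differentially closed fields with $e$ commuting derivations), I would invoke the Ritt--Raudenbush basis theorem: the radical differential ideal defining $\underline Z$ is generated by finitely many differential polynomials of order at most some $r_0$, so $Z_{r_0}$ alone determines $\underline Z$ and $\cb(a/k)$ is generated as an $\hsd_e$-field by the minimal field of definition of $Z_{r_0}$. The algebraic jet spaces $\jet^m(Z_{r_0})_{\nabla_{r_0}(a)}$ form a directed family under the canonical inclusions $\jet^m\hookrightarrow\jet^{m+1}$, and their union determines $Z_{r_0}$ at the generic point $\nabla_{r_0}(a)$ by the Krull intersection theorem; since definable closure commutes with directed unions, one sufficiently large $\jet^m(Z_{r_0})_{\nabla_{r_0}(a)}$ already determines $Z_{r_0}$, and rerunning the argument with this single pair $(m,r_0)$ gives the sharper conclusion. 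The main obstacle is the third paragraph: upgrading the equality of the merely \emph{$K$-rational} points of the jet spaces to the scheme-theoretic equality that Theorem~\ref{maintheorem1} requires — this is exactly where richness of the ground field is indispensable — together with, in the $p=0$ case, the supplementary use of Ritt--Raudenbush and the compactness argument that collapses infinitely many jet spaces to one.
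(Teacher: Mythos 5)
Your argument is correct and follows essentially the same route as the paper: the automorphism characterisation of definable closure, the observation that conjugates of $\underline Z$ over $a$ remain irreducible, separable, dominant with $a$ in the good locus, and an application of Theorem~\ref{maintheorem1} to conclude that automorphisms fixing $a$ and the jet spaces fix $\underline Z$ and hence $\cb(a/k)$. The only (immaterial) differences are that you pass to a monster model and spell out the Zariski-density step identifying the jet spaces of $\underline Z$ and $\sigma(\underline Z)$, and that for $p=0$ you invoke Ritt--Raudenbush plus finite character of $\dcl$ where the paper cites $\omega$-stability (finite generation of the canonical base) -- the same finiteness mechanism.
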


\begin{proof}
We work in the rich iterative $\hsd_e$-field $K$.
Note first of all that $a$ is in the good locus of $\underline Z(K)$;
indeed, being in the good locus is a $k$-definable Zariski-dense open condition on each $Z_r$, a condition which is therefore met by the $k$-generic point $\nabla_r(a)\in Z_r$.
Note also that automorphisms of $K$ (as an iterative Hasse-Schmidt differential field) act on $\hsd_e$-subvarieties by acting on their $K$-points, and they preserve irreducibility, separability, dominance, and the good locus.
It follows that, if $\underline Z'$ is a conjugate of $\underline Z$ over $a$, that is if $\underline Z':=\sigma(\underline Z)$ where $\sigma$ is an automorphism of $K$ fixing $a$, then $\underline Z'$ is also
an irreducible separable dominant $\hsd_e$-subvariety of $\mathbb A_K^n$ with $a$ in its good locus.
We may therefore apply Theorem~\ref{maintheorem1}.
That is, if $\underline Z'$ is any conjugate of $\underline Z$ over $a$ with $\jet_{\hsd_e}^m(\nabla_r\underline Z)_{\nabla_r(a)}(K)=\jet_{\hsd_e}^m(\nabla_r\underline Z')_{\nabla_r(a)}(K)$, for all $m\geq 1$ and $r\geq 0$, then $\underline Z=\underline Z'$.
So all automorphisms fixing $a$ and each $\jet_{\hsd_e}^m(\nabla_r\underline Z)_{\nabla_r(a)}(K)$ also fix $\underline Z$, and hence, by the discussion preceding the statement of the corollary, must fix $\cb(a/k)$.
It follows (using saturation and stability) that $\cb(a/k)\subseteq\dcl\left(\{a\}\cup\bigcup_{m\geq 1,r\geq 0}\jet_{\operatorname{HSD}_e}^m(\nabla_r\underline Z)_{\nabla_r(a)}(K)\right)$.

When the characteristic is zero the canonical base is in fact a finitely generated differential field (by $\omega$-stability), and so only finitely many jet spaces are needed.
But then by choosing $m$ and $r$ sufficiently large, we get $\cb(a/k)\subseteq\dcl\big(a,\jet_{\operatorname{HSD}_e}^m(\nabla_r\underline Z)_{\nabla_r(a)}(K)\big)$ in that case.
\end{proof}

The above corollary generalises to possibly infinite-rank types the main results of Pillay and Ziegler on canonical bases of finite-rank types in $\operatorname{SCH}_{p,e}$ (see Theorem~1.1 and Proposition~6.3 of~\cite{pillayziegler03} for the characteristic zero and positive characteristic cases respectively).
Indeed, as we will see in the next section, if $\tp(a/k)$ is of finite-rank then our jet spaces $\jet_{\hsd_e}^m(\nabla_r\underline Z)_{\nabla_r(a)}(K)$ agree with the ones constructed by Pillay and Ziegler, and are thus finite-dimensional vector spaces over the constants of $K$.
Hence, in the finite-rank case, Corollary~\ref{differentialmain}  says that if $c=\cb(a/k)$ then $\tp(c/a)$ is internal to the constants.

\subsection{Hasse-Schmidt jets via $\mathcal D$-modules}
The differential jet spaces of finite-dimensional differential varieties constructed by Pillay and Ziegler~\cite{pillayziegler03} were given explicitly in terms of the $\delta$-module structure on the algebraic jet space of the ambient algebraic variety.
Their description uses the finite-dimensionality in an essential manner, and no exact analogue can be expected in our setting.
Nevertheless, it is possible to give a characterisation of the Hasse-Schmidt jets that is of a similar flavour, and that is the goal of this final subsection.
The characterisation, Theorem~\ref{existentialdmodulecriterion} below, also shows that for finite-dimensional Kolchin closed sets our jet spaces coincide with those of Pillay and Ziegler (at least at points in the good locus).
The use of the term ``$\underline\D$-modules'' in the title of this subsection is meant to be suggestive; we do not formally develop the theory of $\underline\D$-modules here.

Let us fix an iterative Hasse-Schmidt system $\underline\D$ over $A$, a rich $\underline\D$-field $(k,E)$, a variety $X$ over $k$, and a Hasse-Schmidt subvariety $\underline Z$ of $X$ over $k$.
Fix also a point $a\in\underline Z(k)$ and $m\in\mathbb{N}$.

For each $r\geq 0$ the morphism $\hat\pi_r:Z_{r+1}\to Z_r$ induces a $k$-linear map
$$\mathfrak{m}_{Z_r,\nabla_r(a)}/\mathfrak{m}_{Z_r,\nabla_r(a)}^{m+1}\to \mathfrak{m}_{Z_{r+1},\nabla_{r+1}(a)}/\mathfrak{m}_{Z_{r+1},\nabla_{r+1}(a)}^{m+1}.$$
Setting  $V_r:=\mathfrak{m}_{Z_r,\nabla_r(a)}/\mathfrak{m}_{Z_r,\nabla_r(a)}^{m+1}$ for brevity, we obtain a directed system
$$V_0\to V_1\to V_2\to\cdots$$
Taking $k$-duals we have a corresponding inverse system of restriction maps
$$V_0^*=\jet^m(Z_0)_a(k)\leftarrow V_1^*=\jet^m(Z_1)_{\nabla(a)}(k)\leftarrow V_2^*=\jet^m(Z_2)_{\nabla_2(a)}(k)\leftarrow\cdots$$

\begin{lemma}
\label{eonv}
For each $r\geq 0$, the canonical morphism $r^{Z_0}_r:\tau_r Z_0\times_k\D_r(k)\to Z_0$ induces an additive map $e_r:V_0\to V_r\otimes_k\D_r(k)$.
\end{lemma}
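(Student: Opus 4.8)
The plan is to produce $e_r$ directly from the canonical morphism $r^{Z_0}_r$ by restricting it to $Z_r$ and reading it off on co-ordinate rings. First I would reduce to the case that $X$ — and hence $Z_0$ and $Z_r$ — is affine, by replacing $X$ with an affine open neighbourhood of $a$; prolongations of open subschemes behave as expected, so this changes neither $\nabla_r(a)$ nor the local rings at $a$ and $\nabla_r(a)$. Since $Z_r\subseteq\tau_r Z_0$ by Lemma~\ref{n=1enough}, the subscheme $Z_r\times_k\D_r(k)$ is closed in $\tau_r Z_0\times_k\D_r(k)$, so restricting $r^{Z_0}_r$ yields a morphism $\rho\colon Z_r\times_k\D_r(k)\to Z_0$. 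On co-ordinate rings (using $k[\tau_r Z_0\times_k\D_r(k)]=\D_r(k[\tau_r Z_0])$, and likewise for $Z_r$) this is a ring homomorphism $\rho^\sharp\colon k[Z_0]\to\D_r(k[Z_r])$, namely the map induced by $r^{Z_0}_r$ followed by the surjection $\D_r(k[\tau_r Z_0])\to\D_r(k[Z_r])$ coming from the closed embedding $Z_r\hookrightarrow\tau_r Z_0$.

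The crucial step is to verify that the square
$$\xymatrix{
k[Z_0]\ar[d]_{a^\sharp}\ar[rr]^{\rho^\sharp} && \D_r(k[Z_r])\ar[d]^{\D_r(\nabla_r(a)^\sharp)}\\
k\ar[rr]^{E_r} && \D_r(k)
}$$
commutes, i.e. that $\rho$ ``carries $\nabla_r(a)$ to $a$'' in the appropriate scheme-theoretic sense. I would obtain this from the characteristic property of prolongations: the identification $\tau_r Z_0(k)=Z_0\big(\D^{E_r}_r(k)\big)$ is given by $p\mapsto r^{Z_0}_r\circ\big(p\times_k\D_r(k)\big)$, and under it $\nabla_r(a)$ corresponds to the $\D^{E_r}_r(k)$-point $E_r\circ a^\sharp$ of $Z_0$ — this is essentially the definition of $\nabla_r$. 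Chasing the $k$-point $\nabla_r(a)$ of $Z_r$ (base changed to $\D_r(k)$) through $\rho$ then produces exactly $\D_r(\nabla_r(a)^\sharp)\circ\rho^\sharp=E_r\circ a^\sharp$, which is the commutativity asserted.

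Granting the square, let $\mathfrak n_0:=\ker(a^\sharp)\subseteq k[Z_0]$ and $\mathfrak n_r:=\ker(\nabla_r(a)^\sharp)\subseteq k[Z_r]$ be the ideals of the rational points $a$ and $\nabla_r(a)$. Since $\D_r(\nabla_r(a)^\sharp)\circ\rho^\sharp=E_r\circ a^\sharp$ kills $\mathfrak n_0$, we get $\rho^\sharp(\mathfrak n_0)\subseteq\ker\big(\D_r(\nabla_r(a)^\sharp)\big)$. Because $\D_r(A)$ is free over $A$, applying $\D_r(-)=(-)\otimes_A\D_r(A)$ to $0\to\mathfrak n_r\to k[Z_r]\to k\to 0$ identifies that kernel with the ideal $\mathfrak n_r\cdot\D_r(k[Z_r])=\mathfrak n_r\otimes_A\D_r(A)$. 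Hence the ring homomorphism $\rho^\sharp$ sends $\mathfrak n_0$ into $\mathfrak n_r\cdot\D_r(k[Z_r])$, so it sends $\mathfrak n_0^{m+1}$ into $\mathfrak n_r^{m+1}\cdot\D_r(k[Z_r])$ and therefore induces an additive map
$$\mathfrak n_0/\mathfrak n_0^{m+1}\longrightarrow \big(\mathfrak n_r\cdot\D_r(k[Z_r])\big)\big/\big(\mathfrak n_r^{m+1}\cdot\D_r(k[Z_r])\big).$$
Using freeness of $\D_r(A)$ again, the target is $(\mathfrak n_r/\mathfrak n_r^{m+1})\otimes_A\D_r(A)$, and the standard identification $\mathfrak n/\mathfrak n^{m+1}\cong\mathfrak m/\mathfrak m^{m+1}$ at a rational point gives $\mathfrak n_0/\mathfrak n_0^{m+1}\cong V_0$ and $\mathfrak n_r/\mathfrak n_r^{m+1}\cong V_r$. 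Since $V_r\otimes_A\D_r(A)=V_r\otimes_k\D_r(k)$, this is precisely an additive map $e_r\colon V_0\to V_r\otimes_k\D_r(k)$.

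The reduction to the affine case, the flatness identifications of ideals with tensor products, and the isomorphism $\mathfrak n/\mathfrak n^{m+1}\cong\mathfrak m/\mathfrak m^{m+1}$ are all routine. The step requiring real care — and the main obstacle — is the commuting square above: it amounts to unwinding the identification $\tau_r Z_0(k)=Z_0(\D^{E_r}_r(k))$ together with the way $r^{Z_0}_r$ is ``twisted'' over $k$ through $\spec(E_r)$, and this is where the content of the lemma lies.
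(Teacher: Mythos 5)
Your proposal is correct and follows essentially the same route as the paper: restrict $r^{Z_0}_r$ to $Z_r\times_k\D_r(k)$, observe via the defining property of $\nabla_r$ that this morphism carries $\nabla_r(a)\times_k\D_r(k)$ to $a$ (your commuting square on co-ordinate rings is exactly the paper's commuting square of schemes), and conclude that the ideal of $a$ maps into the ideal generated by that of $\nabla_r(a)$, inducing the map on $V_0\to V_r\otimes_k\D_r(k)$. The paper compresses the final ideal-theoretic step into ``hence it induces $V_0\to V_r\otimes_k\D_r(k)$''; you have simply written out those routine details.
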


\begin{proof}
Since $Z_r\subseteq\tau_r Z_0$, $r^{Z_0}$ restricts to a morphism $Z_r\times_k\D_r(k)\to Z_0$.
On the other hand, we have
$$\xymatrix{
\spec(k)\ar[rr]^a && Z_0\\
\spec\big(\D_r^{E_r}(k)\big)\ar[rr]^{\nabla_r(a)\times_k\D_r(k)}\ar[u]^{\underline E_r}&&\tau_r Z_0\times_k\D_r(k)\ar[u]^{r^{Z_0}_r}
}$$
Indeed, $\nabla_r(a)$ is by definition the unique morphism that makes the above square commute.
So $r^{Z_0}_r$ maps $\nabla_r(a)\times_k\D_r(k)$ to $a$.
Hence it induces $V_0\to V_r\otimes_k\D_r(k)$.
\end{proof}

The maps $e_r:V_0\to V_r\otimes_k\D_r(k)$ endow $V_0$ with something resembling a ``Hasse-Schmidt module'' structure.
For example, while these maps are {\em not} $k$-linear they do satisfy
$$e_r(a\cdot\alpha)=E_r(a)\cdot e_r(\alpha)$$
for all $a\in k$ and $\alpha\in V_0$.

\begin{remark}
Note that in the case when $X$ is affine, $e_r$ is just the map induced by the homomorphism $E^{Z_0,0}_r:k[Z_0]\to k[\tau_r Z_0]\otimes_k\D_r(k)$ discussed in section~\ref{dalgebra}.
\end{remark}

\begin{proposition}
\label{modulejet}
Suppose $\lambda\in \jet^m(Z_0)_a(k)=V_0^*$ and $\gamma\in \jet^m(Z_r)_{\nabla_r(a)}(k)=V_r^*$.
Then the following are quivalent:
\begin{itemize}
\item[(i)]
$\nabla_r(a,\lambda)=\phi_{m,r}^X\big(\nabla_r(a),\gamma\big)$
\item[(ii)] The following diagram commutes
$$\xymatrix{
V_0\ar[rr]^\lambda\ar[d]_{e_r} && k\ar[d]^{E_r}\\
V_r\otimes_k\D_r(k)\ar[rr]^{\ \ \ \gamma\otimes_k\D_r(k)}&&\D_r(k)
}$$
\end{itemize}
\end{proposition}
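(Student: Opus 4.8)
The plan is to unwind both~(i) and~(ii) into equalities of additive maps out of $V_0$ and to observe that these equalities literally coincide.

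First I would pin down the common target in which~(i) takes place. Since $k$ is a field and $a\in X(k)$, the morphism $\spec\big(\D_r^{E_r}(k)\big)\to X$ representing $\nabla_r(a)\in\tau_r X(k)=X\big(\D_r^{E_r}(k)\big)$ is $\spec(E_r)$ followed by $a\colon\spec k\to X$, hence factors through $\spec k$. Therefore the fibre of $\jet^mX\to X$ over $\nabla_r(a)$ is the base change along $E_r$ of the fibre over $a$; concretely, the set of $\D_r(k)$-points of $\jet^mX$ lying over $\nabla_r(a)$ is $\hom_k\big(\mathfrak m_{X,a}/\mathfrak m_{X,a}^{m+1},\D_r(k)\big)$. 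Under $\tau_r\jet^mX(k)=\jet^mX\big(\D_r^{E_r}(k)\big)$, the point $\nabla_r(a,\lambda)$ is the base change of $(a,\lambda)$ along $E_r$, i.e. the functional $E_r\circ\lambda$, where $\lambda$ is viewed as a functional on $\mathfrak m_{X,a}/\mathfrak m_{X,a}^{m+1}$ vanishing on the ideal of $Z_0$. Both $\nabla_r(a,\lambda)$ and $\phi_{m,r}^X(\nabla_r(a),\gamma)$ lie over $\nabla_r(a)$ (the former by functoriality of $\nabla$, the latter because $\phi_{m,r}^X$ is a morphism of linear spaces over $\tau_r X$), so~(i) is an equality of two maps $\mathfrak m_{X,a}/\mathfrak m_{X,a}^{m+1}\to\D_r(k)$.

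Next I would compute $\phi_{m,r}^X(\nabla_r(a),\gamma)$ by tracing through the three steps defining the interpolating map (the zero section, then $\jet^m$ of the canonical morphism $r\colon\tau_r X\times_k\D_r(k)\to X\times_k\D_r^{E_r}(k)$, then restriction of scalars). The net effect is that $\phi_{m,r}^X(\nabla_r(a),\gamma)$ is the jet obtained by pushing the jet $\gamma$, base changed to $\D_r(k)$, forward along $r$. Since the action of $\jet^m$ on a morphism is precomposition with its comorphism modulo the $(m+1)$st power of the maximal ideal, and since the comorphism of $r$ localised at $\nabla_r(a)$ (resp.\ at $a$) is exactly the $\D_r(k)$-linear extension of the additive map $e_r^X\colon\mathfrak m_{X,a}/\mathfrak m_{X,a}^{m+1}\to\big(\mathfrak m_{\tau_r X,\nabla_r(a)}/\mathfrak m_{\tau_r X,\nabla_r(a)}^{m+1}\big)\otimes_k\D_r(k)$ built precisely as in Lemma~\ref{eonv} but with $X$ in place of $Z_0$, this yields $\phi_{m,r}^X(\nabla_r(a),\gamma)=\big(\gamma\otimes_k\D_r(k)\big)\circ e_r^X$ on $\mathfrak m_{X,a}/\mathfrak m_{X,a}^{m+1}$. (Alternatively one may quote the point-wise definition of the interpolating map from~\cite{paperA} mentioned in the footnote, which gives this formula directly.)

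Finally I would descend from $X$ to $Z_0$. The closed embeddings $Z_0\hookrightarrow X$ and $Z_r\hookrightarrow\tau_r X$ induce surjections $\mathrm{surj}_0\colon\mathfrak m_{X,a}/\mathfrak m_{X,a}^{m+1}\twoheadrightarrow V_0$ and $\mathrm{surj}_r\colon\mathfrak m_{\tau_r X,\nabla_r(a)}/\mathfrak m_{\tau_r X,\nabla_r(a)}^{m+1}\twoheadrightarrow V_r$, under which $\lambda$ and $\gamma$ correspond to their composites with $\mathrm{surj}_0$ and $\mathrm{surj}_r$; and the identity $r^X_r|_{Z_r\times_k\D_r(k)}=r^{Z_0}_r|_{Z_r\times_k\D_r(k)}$ gives a commuting square which reads $(\mathrm{surj}_r\otimes_k\D_r(k))\circ e_r^X=e_r\circ\mathrm{surj}_0$. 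Substituting into the formula above, condition~(i) becomes $E_r\circ\lambda\circ\mathrm{surj}_0=\big(\gamma\otimes_k\D_r(k)\big)\circ e_r\circ\mathrm{surj}_0$ as maps $\mathfrak m_{X,a}/\mathfrak m_{X,a}^{m+1}\to\D_r(k)$, and since $\mathrm{surj}_0$ is surjective this is equivalent to $E_r\circ\lambda=\big(\gamma\otimes_k\D_r(k)\big)\circ e_r$ on $V_0$, which is exactly the commutativity of the diagram in~(ii). The step I expect to be the main obstacle is the second one: matching the abstract construction of $\phi_{m,r}^X$ with the concrete ``push a jet forward along the canonical morphism'' picture, and keeping the two layers of base change — extension of scalars to $\D_r(k)$ and the exponential twist by $E_r$ — straight throughout; the remaining steps are routine bookkeeping with maximal ideals and $k$-linear duality.
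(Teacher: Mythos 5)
Your proposal is correct and follows essentially the same route as the paper's proof: both arguments reduce~(i) to the single identity $E_r\circ\lambda=\big(\gamma\otimes_k\D_r(k)\big)\circ e_r$ by (1) identifying the common ambient space of $\D_r^{E_r}(k)$-jets over $\widehat{\nabla_r(a)}$ with linear functionals on (a base change of) $V_0$, (2) computing $\nabla_r(a,\lambda)$ as the base change of $(a,\lambda)$ along $E_r$, i.e.\ $E_r\circ\lambda$, and (3) unwinding the construction of the interpolating map to express $\phi_{m,r}^X\big(\nabla_r(a),\gamma\big)$ as $\gamma$ precomposed with the map induced by the canonical morphism $r$. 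The only cosmetic difference is that you carry out the computation on the ambient $X$ and $\tau_r X$ and descend to $Z_0$ and $Z_r$ at the end via the surjections on cotangent data, whereas the paper works with $V_0$ and $V_r$ (the jets of $Z_0$ and $Z_r$) throughout; this changes nothing of substance.
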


\begin{proof}
This proof will require some further familiarity with the construction of the interpolating map in~\cite{paperA}.

Note that~(i) makes sense: $(a,\lambda)\in\jet^m Z_0(k)$ and
$$\big(\nabla_r(a),\gamma\big)\in\jet^m Z_r(k)\subseteq\jet^m\tau_r Z_0(k)\subseteq\jet^m\tau_r X(k)$$
so that both $\nabla_r(a,\lambda)$ and $\phi_{m,r}^X\big(\nabla_r(a),\gamma\big)$ lie in $\tau_r\jet^m Z_0(k)$.
In fact, under the usual identifications, they both live in
$\jet^m(Z_0)_{\widehat{\nabla_r(a)}}\big(\D_r^{E_r}(k)\big)$, where $\widehat{\nabla_r(a)}:\spec\big(\D_r^{E_r}(k)\big)\to Z_0$ is the $\D_r^{E_r}(k)$-point of $Z_0$ associated to $\nabla_r(a)\in\tau_r Z_0(k)$.

\begin{claim}
\label{z0jetatnr}
$\jet^m(Z_0)_{\widehat{\nabla_r(a)}}\big(\D_r^{E_r}(k)\big) = \hom_{\D_r^{E_r}(k)}\big(V_0\otimes_k\D_r^{E_r}(k),\D_r^{E_r}(k)\big)$
\end{claim}

\begin{proof}
We have
$\jet^m(Z_0)_{\widehat{\nabla_r(a)}}\big(\D_r^{E_r}(k)\big) = \hom_{\D_r^{E_r}(k)}\big(\widehat{\nabla_r(a)}^*(\mathcal{I}/\mathcal{I}^{m+1}) \ , \ \D_r^{E_r}(k)\big)$ where $\mathcal{I}$ is the kernel of the map $\O_{Z_0}\otimes_k\O_{Z_0}\to\O_{Z_0}$ given by $f\otimes g\mapsto fg$ (cf. section~5 of~\cite{paperA}).
On the other hand,
$$\xymatrix{
\spec(k)\ar[rr]^a && Z_0\\
&\spec\big(\D_r^{E_r}(k)\big)\ar[ul]^{\underline{E_r}}\ar[ur]_{\widehat{\nabla_r(a)}}
}$$
commutes.
So $\widehat{\nabla_r(a)}^*(\mathcal{I}/\mathcal{I}^{m+1})=\underline{E_r}^*a^*(\mathcal{I}/\mathcal{I}^{m+1})$.
But
$$a^*(\mathcal{I}/\mathcal{I}^{m+1})=a^{-1}(\mathcal{I}/\mathcal{I}^{m+1})\otimes_{\O_{Z_0,a}}k=\O_{Z_0,a}/\mathfrak{m}_{Z_0,a}^{m+1}=V_0.$$
Hence, $\widehat{\nabla_r(a)}^*(\mathcal{I}/\mathcal{I}^{m+1})$ is (the sheaf of $\D_r^{E_r}(k)$-modules) $V_0\otimes_k\D_r^{E_r}(k)$.
\end{proof}

\begin{claim}
\label{interpofgammarefined}
As an element of $\jet^mZ_0\big(\D_r^{E_r}(k)\big)$,
$\phi\big(\nabla_r(a),\gamma\big) =\big(\widehat{\nabla_r(a)},\alpha\big)$ where $\alpha:V_0\otimes_k\D_r^{E_r}(k)\to\D_r^{E_r}(k)$ is given by
$$\alpha=\big([\gamma\otimes_k\D_r(k)]\circ e_r\big)\otimes_k\id_{\D_r^{E_r}(k)}$$
\end{claim}

\begin{proof}
We can view $\gamma\otimes_k\D_r(k)$ as a ({\em not} $k$-linear) map from $V_r\times_k\D_r(k)$ to $\D_r^{E_r}(k)$.
Precomposing with (the {\em not} $k$-linear) $e_r:V_0\to V_r\times_k\D_r(k)$, we get a map $[\gamma\otimes_k\D_r(k)]\circ e_r:V_0\to\D_r^{E_r}(k)$.
This map {\em is} $k$-linear.
Indeed, one can check this by tracing through the map (using, for example,~(\ref{ersplit}) below).
So the claim makes sense;
$\big([\gamma\otimes_k\D_r(k)]\circ e_r\big)\otimes_k\id_{\D_r^{E_r}(k)}\big):V_0\otimes_k\D_r^{E_r}(k)\to \D_r^{E_r}(k)$ is a well-defined $\D_r^{E_r}(k)$-linear map.

To prove the claim we first describe $\phi\big(\nabla_r(a),\gamma\big)$ using Claim~\ref{z0jetatnr} and the construction of the interpolating map in~\cite{paperA}.
Applying $\jet^m$ functor  to $r_r^{Z_0}\upharpoonright Z_r\otimes_k\D_r(k)$ induces a map
$$v: \jet^m\big(Z_r\times_k\D_r(k)\big)_{\nabla_r(a)\times_k\D_r(k)}\big(\D_r(k)\big)\to \jet^m(Z_0)_{\widehat{\nabla_r(a)}}\big(\D_r^{E_r}(k)\big)$$
by Lemma~6.2 of~\cite{paperA}.
Since $\D_r^{E_r}(k)=\D_r(k)$ as rings, Claim~\ref{z0jetatnr} tells us
$$\jet^m(Z_0)_{\widehat{\nabla_r(a)}}\big(\D_r^{E_r}(k)\big) = \hom_{\D_r(k)}\big(V_0\otimes_k\D_r^{E_r}(k),\D_r(k)\big).$$
On the other hand
$$\jet^m\big(Z_r\times_k\D_r(k)\big)_{\nabla_r(a)\times_k\D_r(k)}\big(\D_r(k)\big)=\hom_{\D_r(k)}\big(V_r\otimes_k\D_r(k), \D_r(k)\big).$$
Hence $v$ is dual to a $\D_r(k)$-linear map
$f:V_0\otimes_k\D_r^{E_r}(k)\to V_r\otimes_k\D_r(k)$.
By definition of the interpolating map in section~6 of~\cite{paperA},
\begin{eqnarray}
\label{interpofgamma}
\phi\big(\nabla_r(a),\gamma\big)
&=&
\big(\widehat{\nabla_r(a)},[\gamma\otimes_k\D_r(k)]\circ f\big)
\end{eqnarray}
On the other hand, $f$ is induced by the Weil representing morphism $\tau_rZ_0\times_k\D_r(k)\to Z_0\times_k\D_r^{E_r}(k)$.
Since $e_r:V_0\to V_0\times_k\D_r(k)$ is induced by $r^{Z_0}_r$, which is the above morphism composed with the projection $X\times_k\D_r^{E_r}(k)\to X$, it follows that
\begin{eqnarray}
\label{ersplit}
e_r
&=&
f\circ(\id_{V_0},1_{\D_r^{E_r}(k)})
\end{eqnarray}
where $(\id_{V_0},1_{\D_r^{E_r}(k)}):V_0\to V_0\otimes_k\D_r^{E_r}(k)$.
It is then not hard to see that
$$\big([\gamma\otimes_k\D_r(k)]\circ e_r\big)\otimes_k\id_{\D_r^{E_r}(k)}\big)=\big(\gamma\otimes_k\D_r(k)\big)\circ f.$$
Claim~\ref{interpofgammarefined} now follows from~(\ref{interpofgamma}).
\end{proof}

\begin{claim}
\label{nablaoflambda}
As an element of $\jet^m Z_0\big(\D_r^{E_r}(k)\big)$,
$$\nabla_r(a,\lambda) =
\big(\widehat{\nabla_r(a)},(E_r\circ\lambda)\otimes_k\id_{\D_r^{E_r}(k)}\big).$$
\end{claim}

\begin{proof}
We are are viewing $\nabla_r(a,\lambda)$ as a $\D_r^{E_r}(k)$-point of $\jet^m Z_0$.
As such we have
$$\xymatrix{
\spec(k)\ar[rr]^{(a,\lambda)}&&\jet^m(Z_0)\\
&\spec\big(\D_r^{E_r}(k)\big)\ar[ul]^{\underline{E_r}}\ar[ur]_{\nabla_r(a,\lambda)}
}$$
Claim~\ref{nablaoflambda} follows.
\end{proof}

Finally, we have
\begin{eqnarray*}
\nabla_r(a,\lambda)=\phi\big(\nabla_r(a),\gamma\big) & \iff &
(E_r\circ\lambda)\otimes_k\id_{\D_r^{E_r}(k)} = \big([\gamma\otimes_k\D_r(k)]\circ e_r\big)\otimes_k\id_{\D_r^{E_r}(k)}\\
& \iff &
E_r\circ\lambda = [\gamma\otimes_k\D_r(k)]\circ e_r
\end{eqnarray*}
where the first equivalence is by Claims~\ref{interpofgammarefined} and~\ref{nablaoflambda}.
This completes the proof of Proposition~\ref{modulejet}.
\end{proof}

We now have a ``$\underline\D$-module" characterisation of the Hasse-Schmidt jet spaces.

\begin{theorem}
\label{existentialdmodulecriterion}
Suppose $\underline Z$ is a $\underline\D$-subvariety of an algebraic variety $X$ over a rich $\underline\D$-field $k$, and $a\in\underline Z(k)$ is in the good locus.
An algebraic jet $\lambda\in \jet^m(X)_a(k)$ is in $\jet^m_{\underline\D}(\underline{Z})_a(k)$ if and only if for all $r\geq 0$ there exists $\gamma_r\in\jet^m(Z_r)_{\nabla_r(a)}(k)$ extending $\lambda$, such that $E_r\circ\lambda=[\gamma_r\otimes_k\D_r(k)]\circ e_r$.
\end{theorem}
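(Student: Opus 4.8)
The plan is to reduce the statement to two results already established: the ``in particular'' clause of Lemma~\ref{simpleatgeneral}, which gives a point-wise description of $\jet^m_{\underline\D}(\underline Z)_a(k)$ via the interpolating map, and Proposition~\ref{modulejet}, which converts the relevant interpolation identity into the commuting-square ($\underline\D$-module) condition appearing in the theorem. Since $a$ lies in the good locus, condition~(3) of Definition~\ref{goodlocus} guarantees that the conclusion of Lemma~\ref{simpleatgeneral} is available.

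First I would invoke that conclusion in the form
$$\jet^m_{\underline\D}(\underline Z)_a(k)=\big\{\lambda\in\jet^m(X)_a(k):\nabla_r(\lambda)\in\phi^X_{m,r}\big(\jet^m(Z_r)_{\nabla_r(a)}(k)\big)\ \text{for all}\ r\geq 0\big\},$$
so that $\lambda\in\jet^m_{\underline\D}(\underline Z)_a(k)$ holds precisely when, for every $r$, there is some $\gamma_r\in\jet^m(Z_r)_{\nabla_r(a)}(k)=V_r^*$ with $\nabla_r(a,\lambda)=\phi^X_{m,r}(\nabla_r(a),\gamma_r)$ in $\tau_r\jet^m X(k)$. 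Taking $r=0$ (where $\phi^X_{m,0}=\id$ and $\nabla_0=\id$) forces any such $\lambda$ into $\jet^m(Z_0)_a(k)=V_0^*$; and applying $\jet^m(\hat\pi_{r,0})$ to the identity $\nabla_r(a,\lambda)=\phi^X_{m,r}(\nabla_r(a),\gamma_r)$, then using the compatibility of the interpolating map with $\pi$ (Proposition~\ref{phi-properties-hasse}(a)) together with $\hat\pi^{\jet^mX}_{r,0}\circ\nabla_r=\id$ (Lemma~\ref{nablaprop}), shows that $\gamma_r$ restricts to $\lambda$ along $\jet^m(Z_r)_{\nabla_r(a)}\to\jet^m(Z_0)_a$ --- that is, $\gamma_r$ ``extends $\lambda$'' in the sense of the statement. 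In the other direction the hypothesis that some $\gamma_r$ extends $\lambda$ already puts $\lambda$ in $V_0^*$, so in both implications we may freely assume $\lambda\in V_0^*$ and $\gamma_r\in V_r^*$.

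With this in hand, Proposition~\ref{modulejet}, applied with the given $r$, asserts exactly the equivalence
$$\nabla_r(a,\lambda)=\phi^X_{m,r}\big(\nabla_r(a),\gamma_r\big)\quad\Longleftrightarrow\quad E_r\circ\lambda=[\gamma_r\otimes_k\D_r(k)]\circ e_r.$$
Substituting this, term by term in $r$, into the description of $\jet^m_{\underline\D}(\underline Z)_a(k)$ from the previous paragraph yields the asserted criterion. I do not expect a genuine obstacle here: the mathematical substance has been absorbed into Lemma~\ref{simpleatgeneral} and Proposition~\ref{modulejet}, and what remains is bookkeeping. The two points needing care are matching the ambient-space jet $\lambda\in\jet^m(X)_a(k)$ of the theorem with the functional on $V_0$ that Proposition~\ref{modulejet} takes as input (handled by the $r=0$ case and the ``extends $\lambda$'' clause as above), and observing that the element written $\nabla_r(\lambda)$ in Lemma~\ref{simpleatgeneral} is, under the identification $(\tau_r\jet^mX)_{\nabla_r(a)}=\tau_r(\jet^m(X)_a)$ of Fact~\ref{nablafunctorialatpoints}, the same point as $\nabla_r$ applied to $(a,\lambda)$, which is immediate from the definition of the nabla map.
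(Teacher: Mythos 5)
Your proposal is correct and follows exactly the paper's route: the paper's proof is the single sentence that the theorem is ``just the `in particular' clause of Lemma~\ref{simpleatgeneral} together with Proposition~\ref{modulejet} combined.'' The extra bookkeeping you supply --- checking that the good-locus hypothesis activates Lemma~\ref{simpleatgeneral}, that the $r=0$ case and compatibility with $\pi$ justify the ``extends $\lambda$'' clause, and that $\nabla_r(\lambda)$ matches $\nabla_r(a,\lambda)$ under the identification of Fact~\ref{nablafunctorialatpoints} --- is precisely what the paper leaves implicit, and it is handled correctly.
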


\begin{proof}
This is just the ``in particular'' clause of Lemma~\ref{simpleatgeneral} together with Proposition~\ref{modulejet} combined.
\end{proof}

Let us use this characterisation to see that our jet spaces for finite-dimensional Kolchin closed sets in the differential context agree with those of Pillay and Ziegler~\cite{pillayziegler03}.
For ease of exposition, we will focus on the ordinary differential setting in characteristic zero.
Suppose $(K,\delta)$ is a saturated model of $\operatorname{DCF}_0$, the theory of differentially closed fields in characteristic zero, viewed as a rich $\hsd_1$-field (see Subsection~\ref{differentialsection}).
Suppose $(X,s)$ is an {\em affine $D$-variety}; that is, $X\subseteq\mathbb A_K^n$ is an irreducible algebraic subvariety and $s=(s_1,\dots,s_n)$ is a tuple of regular functions on $X$ such that $\bar s=(\id,s):X\to\tau X$ is a regular section to $\hat\pi_{1,0}:\tau X\to X$.
Let $\underline Z\subseteq\mathbb A_K^n$ be the irreducible dominant $\hsd_1$-subvariety whose $K$-points form the finite-dimensional Kolchin closed set
$(X,s)^\sharp:=\{a\in X(K):\delta(a)=s(a)\}$.
We explain how $\jet^m_{\hsd_1}(\underline Z)(K)$ agrees with the ``$m$th differential jet space of $(X,s)^\sharp$ at $a$'' in the sense of Pillay and Ziegler (cf.~3.9 of~\cite{pillayziegler03}).

By the Zariski-denseness of $\underline Z(K)$ in $Z_0$ and of $\nabla_1\big(\underline Z(K)\big)$ in $Z_1$, it follows that $X=Z_0$ and $\bar s$ is an isomorphism between $X$ and $Z_1$.
Applying the algebraic jet functor at a fixed point $a\in\underline Z(K)$, we get an isomorphism
$\jet^m(\bar s)_a:\jet^m(X)_a\to\jet^m(Z_1)_{\nabla(a)}$.
Hence, if $\lambda\in V_0^*=\jet^m(X)_a(K)$ then $\jet^m(\bar s)_a(\lambda)$ is the unique element of $V_1^*=\jet^m(Z_1)_{\nabla(a)}(K)$ extending $\lambda$.
Moreover, because the differential equations here are of order $1$, it can be shown that the criterion given by Theorem~\ref{existentialdmodulecriterion} need only be checked for $r=1$.
That is, assuming $a$ is in the good locus of $\underline Z(K)$, $\lambda\in\jet^m_{\hsd_1}(\underline Z)_a(K)$ if and only if
\begin{eqnarray}
\label{hsdcond}
E_1\circ \lambda &=& \big[\jet^m(\bar s)_a(\lambda)\otimes_KK[\eta]/(\eta^2)\big]\circ e_1
\end{eqnarray}
as maps from $V_0$ to $K[\eta]/(\eta^2)$.
Note that by definition, $E_1\lambda(v)=\lambda(v)+\big(\delta\lambda(v)\big)\eta$ for all $v\in V_0$.
On the other hand, in section~3 of~\cite{pillayziegler03} Pillay and Ziegler explain how $s$ induces a $\delta$-module structure $\delta'$ on $V_0=\mathfrak m_{X,a}/\mathfrak m_{X,a}^{m+1}$.
It is not hard to verify that the map $\big[\jet^m(\bar s)_a(\lambda)\otimes_KK[\eta]/(\eta^2)\big]\circ e_1$ is given by $v\mapsto \lambda(v)+\big(\lambda\delta'(v)\big)\eta$.
Hence~(\ref{hsdcond}) is equivalent to $\delta\lambda(v)=\lambda\delta'(v)$ for all $v\in V_0$.
But this is exactly the defining criterion for the Pillay-Ziegler $m$th differential jet space of $(X,s)^\sharp$ at $a$.

\section{Appendix: Other Examples}

Throughout the main text of the paper we have carried along at least one motivating example, namely that of Hasse-Schmidt differential rings (cf.~\ref{differential}, \ref{prolongdifferential}, \ref{iterativedifferential}, \ref{makehassedifferential}, \ref{extendtofielddifferential}, \ref{nondomdifferential}, and~\ref{differentialmain}).
In this appendix we outline several other motivating examples.

\subsection{Rings with endomorphisms}
\label{difference}
Consider the Hasse-Schmidt system $\operatorname{End}=(\D_n \ | \ n\in\mathbb{N})$ where $\D_n$ is $\mathbb{S}^{n+1}$ with the product ring scheme structure, the $\mathbb{S}$-algebra structure given by the diagonal $s_n:\mathbb{S}\to\mathbb{S}^{n+1}$, and $\pi_{m,n}$ the natural co-ordinate projection.
Then an $\operatorname{End}$-ring $(k,E)$ is a ring $k$ together with a sequence of  endomorphisms $(\sigma_i:k\to k\ |\ i \in {\mathbb Z}_+)$, where $E_n:=(\id,\sigma_1,\sigma_2,\dots,\sigma_n)$.

A special case of this is when, for each $n> 0$, $\sigma_{2n}=\tau_1^n$ and $\sigma_{2n+1}=\tau_2^n$, where $\tau_1$ and $\tau_2$ are a pair of endomorphisms of $k$, possibly commuting, and possibly even satisfying the relation $\tau_2=\tau_1^{-1}$.
In this way one can make any {\em difference} ring -- a ring equipped with a distinguished automorphism -- into an $\operatorname{End}$-ring.

A rather more convenient Hasse-Schmidt system for dealing with rings equipped with $e$ commuting automorphisms would be to set $\D_n$ to be $\mathbb{S}^{(2n+1)^e}$ with $s_n$ still the diagonal embedding and $\pi_{n+1,n}$ the natural co-ordinate projection.
Then a ring $k$ with commuting automorphisms $\tau_1,\dots,\tau_e$ can be viewed as a $\operatorname{End}$-ring by setting
$$E_n(x)=\big(\tau_1^{\alpha_1}\tau_2^{\alpha_2}\cdots\tau_e^{\alpha_e} (x)\big)_{\{\alpha\in\mathbb{Z}^e:\text{ each }|\alpha_i|\leq n\}}$$
We can now impose an iterativity condition which will force the iterative $\operatorname{End}$-rings to be rings equipped with $e$ commuting automorphisms.
For ease of presentation, let us deviate slightly from standard multi-index notation and write $|\alpha|\leq n$ to mean that $|\alpha_i|\leq n$ for each $i=1,\dots, e$.
Then our iteration map, $\Delta_{(m,n)}:\D_{m+n} \to \D_{(m,n)}$, will be given by by
$(x_\alpha)_{|\alpha| \leq n+m} \mapsto \big((x_{\beta+\gamma})_{|\beta| \leq n}\big)_{|\gamma| \leq m}$.

\begin{proposition}
\label{difference-example}
The system $\Delta=(\Delta_{(m,n)}:m,n\in\mathbb{N})$, above, makes $\operatorname{End}$ into an iterative Hasse-Schmidt system.
Moreover, the $\Delta$-iterative $\operatorname{End}$-rings are exactly the rings equipped with $e$ commuting automorphisms.
Finally, the system $\operatorname{End}$ extends to fields.
\end{proposition}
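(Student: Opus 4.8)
The plan is to dispatch the three assertions in order; the first two are bookkeeping and only the third requires an idea not already in Section~\ref{genhasssys}. For the first, represent points of $\D_n(R)$ as tuples $(x_\alpha)_{|\alpha|\leq n}$ indexed by $\alpha\in\mathbb{Z}^e$ with $|\alpha_i|\leq n$ for each $i$, so that $\D_m(\D_n(R))$ is canonically $R$ indexed by pairs $(\gamma,\beta)$ with $|\gamma|\leq m$ and $|\beta|\leq n$, and $\Delta_{(m,n)}^R$ is the reindexing $(x_\alpha)\mapsto\big((x_{\beta+\gamma})_{|\beta|\leq n}\big)_{|\gamma|\leq m}$. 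Since every structure map on every $\D_\bullet$ is coordinatewise, $\Delta_{(m,n)}$ is visibly a morphism of ring schemes, and it is a closed embedding because the comorphism $Z_{(\gamma,\beta)}\mapsto Z_{\beta+\gamma}$ is surjective: any $\alpha$ with $|\alpha|\leq m+n$ splits componentwise as $\alpha=\beta+\gamma$ with $|\beta|\leq n$, $|\gamma|\leq m$ (truncate each $\alpha_i$ to $[-m,m]$ to get $\gamma_i$). Compatibility with $\pi$ and the identities $\Delta_{(m,0)}=\Delta_{(0,n)}=\id$ are read off the formula, and associativity (Definition~\ref{iterative-hasse-system}(b)) reduces to $(\delta+\beta)+\gamma=\delta+(\beta+\gamma)$ in $\mathbb{Z}^e$.

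For the second assertion, an $\operatorname{End}$-ring is the same as a ring $k$ with a family $(\partial_\alpha:\alpha\in\mathbb{Z}^e)$ of ring endomorphisms with $\partial_0=\id$, via $E_n=(\partial_\alpha)_{|\alpha|\leq n}$ (each $\partial_\alpha$ is an endomorphism since $\D_n$ has the product structure, and compatibility with $\pi$ makes $\partial_\alpha$ independent of $n$). Computing the two legs of the iterativity square of Definition~\ref{iterative-hasse-system} gives $\Delta_{(m,n)}^k\circ E_{m+n}=\big((\partial_{\beta+\gamma})_{|\beta|\leq n}\big)_{|\gamma|\leq m}$ and $E_{(m,n)}=\D_m(E_n)\circ E_m=\big((\partial_\beta\circ\partial_\gamma)_{|\beta|\leq n}\big)_{|\gamma|\leq m}$, so $\Delta$-iterativity is exactly the condition $\partial_\beta\circ\partial_\gamma=\partial_{\beta+\gamma}$ for all $\beta,\gamma\in\mathbb{Z}^e$. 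In particular all the $\partial_\alpha$ commute, and $\partial_{\mathbf{e}_i}\circ\partial_{-\mathbf{e}_i}=\partial_0=\id=\partial_{-\mathbf{e}_i}\circ\partial_{\mathbf{e}_i}$ shows each $\tau_i:=\partial_{\mathbf{e}_i}$ ($\mathbf{e}_i$ the $i$th standard basis vector) is a commuting automorphism, while an easy induction gives $\partial_\alpha=\tau_1^{\alpha_1}\cdots\tau_e^{\alpha_e}$. Conversely, commuting automorphisms $\tau_1,\dots,\tau_e$ give via $\partial_\alpha:=\tau_1^{\alpha_1}\cdots\tau_e^{\alpha_e}$ a family satisfying $\partial_\beta\circ\partial_\gamma=\partial_{\beta+\gamma}$, hence a $\Delta$-iterative $\operatorname{End}$-ring, and the two constructions are mutually inverse.

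For the third assertion, Proposition~\ref{nillocal} is of no help: $\ker(\pi_{n,0}:\D_n(A)\to A)$ contains a nonzero idempotent (the tuple supported at a single nonzero multi-index) for every $n\geq1$, hence is not nilpotent. Instead I would use the second assertion directly. An iterative $\operatorname{End}$-integral domain $(R,E)$ carries commuting ring automorphisms $\tau_1,\dots,\tau_e$, and any automorphism of an integral domain extends uniquely to an automorphism of its fraction field $K$ by $a/b\mapsto\tau_i(a)/\tau_i(b)$, well-defined since $\tau_i$ is bijective and hence $b\neq0$ forces $\tau_i(b)\neq0$. The extensions $\tilde\tau_i$ still commute (their restrictions to $R$ commute and the extension is unique), so $\tilde E_n(x):=\big(\tilde\tau_1^{\alpha_1}\cdots\tilde\tau_e^{\alpha_e}(x)\big)_{|\alpha|\leq n}$ defines ring homomorphisms $K\to\D_n(K)$ that restrict to $E_n$ on $R$ and make the localisation square commute; by Remark~\ref{localcompat} (or directly) $\tilde E$ is an iterative $\operatorname{End}$-field structure on $K$, so $E$ localises to $K$. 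I expect this last point — noticing that the nilpotence criterion fails and that iterativity is exactly what upgrades the operators to automorphisms, which then extend to fractions — to be the only non-mechanical step in the proof.
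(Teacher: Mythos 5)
Your proposal is correct and follows essentially the same route as the paper: the iterative-system axioms are checked by direct reindexing, iterativity is shown to be equivalent to $\sigma_\gamma\circ\sigma_\beta=\sigma_{\beta+\gamma}$ (which forces the operators to be the powers of $e$ commuting automorphisms), and the extension to fields rests on the fact that iterativity upgrades the operators to automorphisms of the domain. The only cosmetic difference is in the last step, where the paper verifies that $E_n$ sends nonzero elements to units of $\D_n(K)$ and invokes the universal property of localisation, whereas you extend the $\tau_i$ to $K$ directly; these amount to the same observation, and your remark that Proposition~\ref{nillocal} is inapplicable here is accurate.
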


\begin{proof}
We leave the straightforward (though somewhat notationally tedious) task of showing that $(\operatorname{End},\Delta)$ is an iterative system, to the reader.
If $(k,E)$ is an $\operatorname{End}$-ring then by the compatibility of $E$ with $\pi$ we can write  $E_n(x) = \big(\sigma_\alpha(x)\big)_{\{\alpha\in\mathbb{Z}^e:|\alpha|\leq n\}}$ where each $\sigma_\alpha$ is an endomorphisms of $k$.
Then for $(k,E)$ to be $\Delta$-iterative means exactly that
\begin{eqnarray}
\label{endit}
\sigma_\gamma\circ\sigma_\beta & =&\sigma_{\beta+\gamma} \ \ \text{for all $\beta,\gamma\in\mathbb{Z}^e$.}
\end{eqnarray}
Clearly, if $\sigma_\alpha=\tau_1^{\alpha_1}\tau_2^{\alpha_2}\cdots\tau_e^{\alpha_e}$ for all $\alpha\in\mathbb{Z}^e$, where $\tau_1,\dots,\tau_e$ are commuting automorphisms of $k$, then~(\ref{endit}) holds.
Conversely, for $i=1,\dots,e$, let $\tau_i:=\sigma_{(\dots,0,1,0,\dots)}$ where the $1$ is in the $i$th co-ordinate.
Then~(\ref{endit}) implies that the $\tau_1,\dots,\tau_e$ commute, are invertible, and
 $\sigma_\alpha=\tau_1^{\alpha_1}\tau_2^{\alpha_2}\cdots\tau_e^{\alpha_e}$ for all $\alpha\in\mathbb{Z}^e$.

To see that $\operatorname{End}$ extends to fields suppose $(R,E)$ is an iterative $\operatorname{End}$-integral domain and $K$ is the fraction field of $R$.
We need to extend each $E_n$ to a ring homomorphism $\tilde{E_n}:K\to\D_n(K)$.
It suffices to check that $E_n$ takes nonzero elements in $R$ to units in $\D_n(K)$.
But this is the case since the units in $\D_n(K)=K^{(2n+1)^e}$ are just those elements all of whose co-ordinates are nonzero, and $E_n(x)=\big(\tau_1^{\alpha_1}\tau_2^{\alpha_2}\cdots\tau_e^{\alpha_e} (x)\big)_{|\alpha|\leq n}$, where the $\tau_i$ are automorphisms of $R$.
\end{proof}

\begin{remark}
Note that it is not the case that $\operatorname{End}$-rings always localise, one must require that the
multiplicatively closed set by which one is localising is also closed under the
operators.
Note also that we really needed iterativity here in order to extend to fields:
if $R$ is an integral domain and $\sigma:R \to R$ is a nonconstant
endomorphism with a nontrivial kernel (\emph{eg} $R = {\mathbb Z}[x]$ and
$\sigma(f(x)) := f(0)$), then there is no extension of $\sigma$ to an
endomorphism of the field of fractions of $R$.
\end{remark}

Let us now make explicit the model-theoretic content of Theorem~\ref{maintheorem1} applied to this example.
We work in the theory $\operatorname{ACFA}$ of existentially closed difference fields (in one derivation), and in a saturated model $(K,\sigma)$ of this theory.
Analogously to the differential case discussed in subsection~\ref{differentialsection}, given a difference subfield $k\subseteq K$ and $a\in K^n$, we can define the {\em $\operatorname{End}$-locus of $a$ over $k$} to be the irreducible dominant $\operatorname{End}$-subvariety $\underline Z=(Z_r)$ of $\mathbb A_K^n$ where $Z_r$ is the Zariski-locus of $\nabla_r(a)=\big(a,\sigma(a),\dots,\sigma^r(a)\big)$ over $k$.
Because the theory does not admit quantifier elimination, the locus only captures the quantifier-free type of $a$ over $k$.
Nevertheless, if $k$ is algebraically closed then the (simplicity-theoretic) canonical base of $\tp(a/k)$ is an algebraic extension of the difference field generated by the minimal fields of definition of all the $Z_r$.
In the difference-field analogue of Corollary~\ref{differentialmain} we must therefore replace $\dcl$ by $\acl$, but otherwise the statement and the proof are the same:

\begin{corollary}
\label{differencemain}
Suppose $(K,\sigma)\models\operatorname{ACFA}$ is saturated, $k\subseteq K$ is an algebraically closed difference subfield of cardinality less than $|K|$, $a\in K^n$, and $\underline Z\subseteq \mathbb A_K^n$ is the $\operatorname{End}$-locus of $a$ over $k$.
If $\underline Z$ is separable then there exist $m\geq 1$ and $r\geq 0$ such that $\cb(a/k)\subseteq\acl\big(a,\jet_{\operatorname{End}}^m(\nabla_r\underline Z)_{\nabla_r(a)}(K)\big)$.
\end{corollary}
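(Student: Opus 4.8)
The plan is to run the proof of Corollary~\ref{differentialmain} essentially verbatim, replacing $\operatorname{HSD}_e$ by $\operatorname{End}$, $\operatorname{SCH}_{p,e}$ by $\operatorname{ACFA}$, and $\dcl$ by $\acl$. First I would observe that the saturated model $(K,\sigma)$ is a rich iterative $\operatorname{End}$-field: since $\operatorname{End}$ extends to fields (Proposition~\ref{difference-example}), Proposition~\ref{generics} produces a generic point of any dominant irreducible $\operatorname{End}$-subvariety over a small subfield inside some $\operatorname{End}$-field extension, and existential closedness together with saturation let one realise such a point inside $K$ itself --- this is the difference-field analogue of the richness check made for $\operatorname{SCH}_{p,e}$ in Subsection~\ref{differentialsection}. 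Next I would check that $a$ lies in the good locus of $\underline Z(K)$: each of the three conditions in Definition~\ref{goodlocus} cuts out a dense Zariski-open $k$-definable subset of the corresponding $Z_r$, and so is met by the $k$-generic point $\nabla_r(a)\in Z_r$. Finally, I would note that every automorphism of $(K,\sigma)$ sends $\operatorname{End}$-subvarieties of $\mathbb A^n_K$ to $\operatorname{End}$-subvarieties (acting on $K$-points) and preserves irreducibility, separability, dominance, and membership in the good locus.

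With this in place, the heart of the argument is as follows. Suppose $g$ is an automorphism of $(K,\sigma)$ fixing $a$ and fixing each set $\jet^m_{\operatorname{End}}(\nabla_r\underline Z)_{\nabla_r(a)}(K)$ for all $m\geq 1$ and $r\geq 0$. Then $g(\underline Z)$ is again an irreducible separable dominant $\operatorname{End}$-subvariety of $\mathbb A^n_K$ with $a$ in its good locus, and its Hasse-Schmidt jet spaces at $\nabla_r(a)$ coincide with those of $\underline Z$; Theorem~\ref{maintheorem1} then yields $g(\underline Z)=\underline Z$, hence $g(Z_r)=Z_r$ for every $r$. Since an automorphism fixing a subvariety fixes its minimal field of definition pointwise, $g$ fixes pointwise the difference subfield of $K$ generated by the minimal fields of definition of the $Z_r$, and therefore --- as $\cb(a/k)$ is, by the discussion preceding the statement, an algebraic extension of that difference field --- $g$ carries $\cb(a/k)$ into the algebraic closure of $a$ together with the parameters defining the jet spaces. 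By saturation and the usual Galois-theoretic characterisation of algebraic closure in a simple theory, this gives
$$\cb(a/k)\subseteq\acl\Big(\{a\}\cup\bigcup_{m\geq 1,\ r\geq 0}\jet^m_{\operatorname{End}}(\nabla_r\underline Z)_{\nabla_r(a)}(K)\Big).$$

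To finish I would pass from this union to a single jet space. Since $\operatorname{ACFA}$ is supersimple, $\cb(a/k)$ is interalgebraic with a finite real tuple, so it lies in the algebraic closure of $a$ together with finitely many elements, each belonging to some $\jet^{m_i}_{\operatorname{End}}(\nabla_{r_i}\underline Z)_{\nabla_{r_i}(a)}(K)$. Taking $m\geq\max_i m_i$ and $r\geq\max_i r_i$, the compatibility of the interpolating map with the transition maps $\pi$ and with $\Delta$ (Proposition~\ref{phi-properties-hasse}) together with functoriality of the algebraic jet functor show that $\jet^m_{\operatorname{End}}(\nabla_r\underline Z)_{\nabla_r(a)}(K)$ maps onto each of these finitely many jet spaces, so that all of them lie in $\dcl\big(a,\jet^m_{\operatorname{End}}(\nabla_r\underline Z)_{\nabla_r(a)}(K)\big)$, and hence $\cb(a/k)\subseteq\acl\big(a,\jet^m_{\operatorname{End}}(\nabla_r\underline Z)_{\nabla_r(a)}(K)\big)$ for this choice of $m$ and $r$, as desired. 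The step I expect to be most delicate is not a computation but the model-theoretic bookkeeping: verifying that the saturated $\operatorname{ACFA}$-model is rich in our precise sense, and justifying the passage from $\dcl$ to $\acl$. The latter is forced by the fact that $\operatorname{ACFA}$ does not eliminate quantifiers and that canonical bases there are only interalgebraic --- not interdefinable --- with the difference field generated by the minimal fields of definition of the Zariski loci $Z_r$; likewise, it is supersimplicity rather than $\omega$-stability that allows finitely many, and then a single, jet space to suffice.
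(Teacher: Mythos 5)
Your proposal is correct and follows essentially the same route as the paper, which itself gives no separate argument for this corollary beyond the remark that one runs the proof of Corollary~\ref{differentialmain} verbatim with $\operatorname{HSD}_e$, $\operatorname{SCH}_{p,e}$, $\dcl$ replaced by $\operatorname{End}$, $\operatorname{ACFA}$, $\acl$. Your added details --- richness of a saturated model of $\operatorname{ACFA}$ via Proposition~\ref{difference-example} and Proposition~\ref{generics}, and supersimplicity in place of $\omega$-stability to reduce to a single jet space --- are exactly the bookkeeping the paper leaves implicit.
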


Analogously to the differential case, when $\tp(a/k)$ is of finite-rank our jet spaces agree with those of Pillay and Ziegler, and Corollary~\ref{differencemain} recovers Theorem~1.2 of~\cite{pillayziegler03}; namely, that $\cb(a/k)$ is almost internal to the fixed field of $(K,\sigma)$.

\subsection{Difference-differential rings}
\label{difference-differential}
We can combine the above example with the differential example.
A Hasse-Schmidt system that is convenient for the study of a ring equipped with one Hasse-Schmidt derivation together with an endomorphism might be the following:
$\displaystyle \D_n(R)=\prod_{i=0}^nR[\eta]/(\eta)^{n+1-i}$,
$s_n(r):=(r+(\eta)^{n+1-i}:i=0,\dots,n)$,
$\displaystyle \psi_n:\D_n(R)\to\prod_{i=0}^nR^{n+1-i}$ given by the standard monomial basis in each of the $n+1$ factors,
and $\pi_{m,n}:\D_m(R)\to\D_n(R)$ given by projecting onto the first $n$ coordinates and then taking the quotient $R[\eta]/(\eta)^{m+n+1-i}\to R[\eta](\eta)^{n+1-i}$ on each of the remaining factors.
Given a ring $k$ together with a Hasse-Schmidt derivation ${\bf D}$ and an endomorphism $\sigma$, we make $k$ into a $\underline\D$-ring by setting $E_n:k\to\D_n(k)$ to be the ring homomorphism
$$E_n(x)=\big(\sum_{j=0}^{n-i}\sigma^iD_j(x)\eta^j \ :i=0,1,\dots,n\big).$$

As before, if one wants to focus on the case of an automorphism a more convenient presentation  would be
$$\D_n(R)=\prod_{i=1}^nR[\eta]/(\eta)^{n+1-i} \ \times R[\eta]/(\eta)^{n+1} \times \prod_{i=1}^nR[\eta]/(\eta)^{n+1-i}$$
and
$$E_n(x)=\big(\sum_{j=0}^{n-i}\sigma^{-i}D_j(x)\eta^j, \sum_{j=0}^nD_j(x)\eta^j,\sum_{j=0}^{n-i}\sigma^iD_j(x)\eta^j :i=1,\dots,n\big).$$
We can then combine the iterativity maps for $\operatorname{HSD}$ and $\operatorname{End}$ to obtain
an iteration map $\Delta_{(m,n)}:\D_{m+n} \to \D_{(m,n)}$ given by
$$f_i(\eta)_{-{n+m} \leq i \leq n+m} \mapsto \big((f_{\alpha+\beta}(\zeta+\epsilon))_{-n \leq \alpha \leq n}\big)_{-m \leq \beta \leq m}.$$
The corresponding iterative Hasse-Schmidt rings are precisely rings equipped with an iterative Hasse-Schmidt derivation and an automorphism that commutes with the Hasse-Schmidt derivation.
Moreover, this iterative Hasse-Schmidt system will extend to fields.

\subsection{Higher $D$-rings}
\label{Dring}
As a final example we consider a higher order version of the $D$-rings studied by the second author in~\cite{Sc-thesis} and~\cite{scanlon2000}, see also Example~3.7 of~\cite{paperA}.
As we explain at the end of this section, higher $D$-rings specialise to both Hasse-Schmidt differential rings and to difference rings thought of as rings with difference operators.

Let $e$ be a positive integer and let $A := \ZZ[c_1,\ldots,c_e]$ be the polynomial ring in $e$ indeterminates.
We define a Hasse-Schmidt system over $A$ as follows.
For each $m \in \NN$, let
$$P_m(X,W) := \prod_{i=0}^{m-1} (X - i W) \in \ZZ[X,W]$$
where for convenience we set $P_0(X,W) := 1$.
 For $I \in \NN^e$ and $R$ an $A$-algebra define
$$\cD_I(R) := R[\epsilon_1,\ldots,\epsilon_e]/\big(P_{I_1+1}(\epsilon_1,c_1),\ldots,P_{I_e+1}(\epsilon_e,c_e)\big).$$
As $P_\ell(X,W)$ divides $P_m(X,W)$ for $\ell \leq m$, we have quotient maps $\pi_{I,J}:\cD_I(R) \to \cD_J(R)$ for $J \leq I$.
Since, $P_1(X,W) = X$, $\cD_{\boldsymbol{0}}(R) = R$.
As $P_m(X,c_\ell)$ is a monic polynomial over $k$, the rings $\cD_I(R)$ are free $R$-algebras with monomial basis $$\{ \epsilon^J : J \leq I \}.$$
So $\underline\cD=(\cD_I:I\in\mathbb{N}^e)$ is a Hasse-Schmidt system over $A$, albeit indexed by $\mathbb{N}^e$ and thus diverging slightly from our formalism.

Observe that the ring $\ZZ[W][X,Y]/(P_\ell(X,W),P_m(Y,W))$ is the coordinate ring of the reduced subscheme $X_{\ell,m}$ of ${\mathbb A}^2_{\ZZ[W]}$ whose underlying space is $\{ (i W, j W) : 0 \leq i < \ell, 0 \leq j < m \}$.
Visibly, $P_{\ell+m+1}(X+Y,W)$ is identically zero on $X_{\ell+1,m+1}$.
Hence, $$P_{\ell+m+1}(X+Y,W) \in (P_{\ell+1}(X,W),P_{m+1}(Y,W)).$$
This observation permits a definition of an iteration map.
Indeed, changing variables so as to separate out the roles of each of the applications of $\cD_I$, for $I$ and $J$ two multi-indices in $\NN^e$ and $R$ an $A$-algebra, let us write $\cD_I \circ \cD_J (R)$ as
$$R[X_1,\ldots,X_e,Y_1,\ldots,Y_e]/(P_{I_1}(X_1,c_1),\ldots,P_{I_e}(X_e,c_e),P_{I_1}(Y_1,c_1),\ldots,P_{I_e}(Y_e,c_e))$$
and
$$\cD_{I+J}(R) := R[Z_1,\ldots,Z_e]/(P_{I_1 + J_1}(Z_1,c_1),\ldots,P_{I_n+J_n}(Z_n,c_n)).$$
The iteration map
$\Delta_{I,J}:\cD_{I+J} \to \cD_I \circ \cD_J$
is then defined by
$Z_i \mapsto X_i + Y_i$
for $1 \leq i \leq n$.
Our observation that $P_{\ell+m+1}(X_i+Y_i,c_i)$ may be expressed as an $R$-linear combination of $P_{\ell+1}(X_i,c_i)$ and $P_{m+1}(Y_i,c_i)$ shows that $\Delta_{I,J}$ is a homomorphism of $R$-algebras.
Visibly these maps are associative and compatible with the projection maps defining the inverse system.

As usual, a $\underline\cD$-ring structure on an $A$-algebra $k$ is given by collection of $A$-algebra homomorphisms $E_I:k \to \cD_I(k)$ compatible with the identification $\cD_{\boldsymbol{0}}(k) = k$ and the maps $\pi_{I,J}:\cD_I(k) \to \cD_J(k)$ in the inverse system.
We may express each such map in terms of the monomial basis as  $$E_I(x) = \sum_{J \leq I} \partial_{I,J}(x) \epsilon^J.$$
However, it is not the case in general that for $J \leq I$ and $J \leq K$ that $\partial_{I,J} = \partial_{K,J}$.
For example, taking $e = 1$, we have  $\epsilon^2  = 0 \cdot \epsilon^0 + 0 \cdot \epsilon^1 + 1 \cdot \epsilon^2$ in $\cD_2(k)$ but $\epsilon^2 = 0 \cdot \epsilon^0 + e \cdot \epsilon^1$ in $\cD_1(k)$.
If we wish to express the $\underline\cD$-ring structure on $k$ via a single $\NN^e$-indexed sequence of operators $\delta_J:k \to k$, then instead of the monomial basis we should take $\{\beta_J:J\leq I\}$ as a basis for $\cD_I$, where
$$\beta_J(\epsilon_1,\dots,\epsilon_e) := \prod_{i=1}^e P_{J_i}(\epsilon_i,c_i).$$
Viewing the $\cD_I$ as finite free $\mathbb{S}$-algebras with respect to this basis, we have that if $(k,E)$ is a $\underline\cD$-ring then
$$ E_I(x)=\sum_{J\leq I}\partial_J(x)\beta_J$$
where $(\partial_J: J\in\mathbb{N}^e)$, are $A$-linear additive endomorphisms of $k$.

\begin{proposition}
\label{ithd}
Suppose $k$ is an $A$-algebra and $(\partial_I:I\in\mathbb{N }^e)$ is a set of $A$-linear additive endomorphisms of $k$.
For $i \leq e$, let $\sigma_i:=c_i\cdot\partial_i+ \id$ where $\partial_i:= \partial_{(0,\dots,0,1,0,\dots,0)}$ with the $1$ is in the $i$th co-ordinate.
For $K\in\mathbb{N}^e$, set $\sigma^K:=\sigma_1^{K_1}\circ\cdots\circ\sigma_e^{K_e}$.
Then setting $\displaystyle E_I(x)=\sum_{J\leq I}\partial_J(x)\beta_J$ for all $I\in\mathbb{N}^e$,
$(k,E)$ is an iterative $\underline\cD$-ring
if and only if the following two rules hold
\begin{itemize}
\item
Product rule:
$\displaystyle \partial_I(xy) = \sum_{J+K = I} \sigma^K \big(\partial_J (x)\big)\cdot\partial_K(y)$,
\item
Iteration rule:
$\displaystyle \partial_I\circ\partial_J=\binom{I+J}{I} \partial_{I+J}$.
\end{itemize}
\end{proposition}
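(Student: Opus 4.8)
The plan is to mimic the proof of Proposition~\ref{iterativedifferential}: unwind each of the three defining conditions of an iterative $\underline\cD$-ring into a condition on the operators $(\partial_I)$, using two classical Vandermonde-type identities for the $P_m$. First the easy bookkeeping. Since $P_\ell(X,W)\mid P_m(X,W)$ for $\ell\le m$, the projection $\pi_{I,J}$ sends $\beta_L\mapsto\beta_L$ when $L\le J$ and $\beta_L\mapsto 0$ otherwise; hence expressing all the $E_I$ through a single $\NN^e$-indexed family $(\partial_J)$ in the $\beta$-basis is precisely what compatibility of the $E_I$ with the $\pi_{I,J}$ amounts to --- this is the whole point of the basis $\{\beta_J\}$. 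Similarly $E_{\boldsymbol 0}=\id$ is the condition $\partial_{\boldsymbol 0}=\id$ (taken as read, as in Example~\ref{differential}), $A$-linearity of each $E_I$ is automatic from that of the $\partial_J$, and $E_I(1)=1$ follows by a routine induction on $I$ from the product rule together with $\partial_{\boldsymbol 0}=\id$. So the only substantive conditions left are: (a) multiplicativity of each $E_I$, and (b) $\Delta$-iterativity.

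The two identities I would isolate are, for indeterminates $X,Y,W$ and naturals $a,b,n$: (i) $P_n(X+Y,W)=\sum_{a+b=n}\binom{n}{a}P_a(X,W)P_b(Y,W)$, and (ii) $P_a(X,W)P_b(X,W)=\sum_{k\ge 0}\binom{a}{k}\binom{b}{k}k!\,W^k\,P_{a+b-k}(X,W)$. Both follow from the Chu--Vandermonde identities for sums and products of falling factorials after writing $P_m(X,W)=W^m\prod_{j=0}^{m-1}(X/W-j)$, and each holds in $\ZZ[X,Y,W]$ because it holds at all non-negative integer specializations; (i) refines the divisibility remark already made in the text. Taking products over the $e$ coordinates, (i) computes $\Delta_{I,J}(\beta_L)=\sum_{M+N=L}\binom{L}{M}\beta_M(X)\beta_N(Y)$ inside $\cD_I\circ\cD_J(R)$ (with $\beta_M(X)$ in the outer and $\beta_N(Y)$ in the inner variables), while (ii) gives the structure constants for the $\beta$-multiplication in each $\cD_I(R)$, truncated by discarding $\beta_L$ with $L\not\le I$.

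The heart is then two coefficient comparisons. For $\Delta$-iterativity: applying $\Delta^k_{(I,J)}$ to $E_{I+J}(x)=\sum_L\partial_L(x)\beta_L$ and using (i) gives $\sum_{M,N}\binom{M+N}{M}\partial_{M+N}(x)\,\beta_M(X)\beta_N(Y)$, whereas $E_{(I,J)}(x)=\cD_I(E_J)\big(E_I(x)\big)=\sum_{M,N}\partial_N(\partial_M(x))\,\beta_M(X)\beta_N(Y)$ --- here one uses that $\cD_I(E_J)$ fixes the $c_i\in A$, hence fixes $\beta_M(X)$, and applies $E_J$ to the coefficients; comparing basis coefficients, $\Delta$-iterativity holds iff $\partial_N\circ\partial_M=\binom{M+N}{M}\partial_{M+N}$ for all $M,N$, which is the iteration rule. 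For multiplicativity: expanding $E_I(x)E_I(y)=\sum_{J,K}\partial_J(x)\partial_K(y)\beta_J\beta_K$ with (ii), the coefficient of $\beta_L$ is $\sum_{J,K}\big(\prod_i\binom{J_i}{s_i}\binom{K_i}{s_i}s_i!\,c_i^{s_i}\big)\partial_J(x)\partial_K(y)$ where $s_i:=J_i+K_i-L_i$; on the other side, expanding $\sigma_i=c_i\partial_i+\id$ binomially and then invoking the iteration rule to turn $\partial_i^{\,t}$ into $t!\,\partial_{t e_i}$ (with $e_i$ the $i$-th unit multi-index) and to collapse mixed compositions, one finds $\sigma^{N}\big(\partial_{M}(x)\big)=\sum_{J}\big(\prod_i\binom{N_i}{J_i-M_i}(J_i-M_i)!\,c_i^{J_i-M_i}\binom{J_i}{J_i-M_i}\big)\partial_J(x)$, and substituting this into the right-hand side of the product rule yields exactly the same structure constants. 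Hence, \emph{given the iteration rule}, multiplicativity of all the $E_I$ is equivalent to the product rule. Combining the two equivalences gives the proposition.

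The step I expect to be the main obstacle is this last structure-constant match, along with the logical care it forces: the iteration rule is exactly what makes $\sigma^N$ expressible through the $\partial_J$, so it must be invoked in \emph{both} directions of the product-rule equivalence, and it genuinely cannot be recovered from multiplicativity of the $E_I$ alone --- both rules are needed. Everything else is index-chasing of the kind already carried out for $\operatorname{HSD}_e$ in Proposition~\ref{iterativedifferential}; in particular, checking that $(\underline\cD,\Delta)$ is an iterative system in the first place (associativity and $\pi$-compatibility of $\Delta$, and $\Delta_{I,\boldsymbol 0}=\Delta_{\boldsymbol 0,J}=\id$) is routine and is essentially the content of the computations made when $\Delta$ was defined.
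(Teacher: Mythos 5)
Your proposal is correct and follows essentially the same route as the paper's proof: the two identities you isolate are exactly Lemmas~\ref{sumexpression} and~\ref{productexpression}, and the subsequent coefficient comparisons --- including the use of the iteration rule to express $\sigma^N\circ\partial_M$ through the $\partial_J$ and thereby pass between raw multiplicativity of the $E_I$ and the stated product rule --- match the paper's argument step for step. The only cosmetic difference is that you prove the product identity for the $P_m$ by evaluation at integer multiples of $W$ rather than by the paper's induction.
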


To carry out this proof we need a few easy combinatorial lemmata. 
 
Let us start with a calculation allowing us to see the iteration rule.

\begin{lemma}
\label{sumexpression}
$P_\ell(X+Y,W) = \sum_{m=0}^{\ell} \binom{\ell}{m} P_m(X,W) P_{\ell - m}(Y,W)$
\end{lemma}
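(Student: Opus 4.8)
The plan is to prove the identity $P_\ell(X+Y,W) = \sum_{m=0}^\ell \binom{\ell}{m} P_m(X,W) P_{\ell-m}(Y,W)$ by induction on $\ell$, using the recursion $P_{\ell+1}(X,W) = P_\ell(X,W)\cdot(X - \ell W)$ that is immediate from the defining product $P_\ell(X,W) = \prod_{i=0}^{\ell-1}(X - iW)$. The base case $\ell = 0$ (and $\ell = 1$) is trivial since $P_0 = 1$ and $P_1(X,W) = X$. For the inductive step, I would write $P_{\ell+1}(X+Y,W) = P_\ell(X+Y,W)\cdot\big((X+Y) - \ell W\big)$, substitute the inductive hypothesis for $P_\ell(X+Y,W)$, and then split the factor $(X+Y-\ell W)$ as $\big(X - mW\big) + \big(Y - (\ell-m)W\big)$ inside the $m$-th summand — this is the key algebraic trick, chosen precisely so that $(X-mW)$ combines with $P_m(X,W)$ to give $P_{m+1}(X,W)$ and $(Y-(\ell-m)W)$ combines with $P_{\ell-m}(Y,W)$ to give $P_{\ell-m+1}(Y,W)$.

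After this substitution the sum breaks into two pieces: $\sum_m \binom{\ell}{m} P_{m+1}(X,W) P_{\ell-m}(Y,W)$ and $\sum_m \binom{\ell}{m} P_m(X,W) P_{\ell-m+1}(Y,W)$. Reindexing the first sum ($m \mapsto m-1$) and leaving the second as is, one collects the coefficient of $P_m(X,W) P_{\ell+1-m}(Y,W)$, which is $\binom{\ell}{m-1} + \binom{\ell}{m} = \binom{\ell+1}{m}$ by Pascal's rule. Care is needed at the boundary terms $m = 0$ and $m = \ell+1$, but these work out automatically since $\binom{\ell}{-1} = \binom{\ell}{\ell+1} = 0$. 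This yields exactly $\sum_{m=0}^{\ell+1}\binom{\ell+1}{m} P_m(X,W) P_{\ell+1-m}(Y,W)$, completing the induction.

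I do not anticipate a serious obstacle here: the statement is a polynomial identity in $\mathbb{Z}[X,Y,W]$, and the only thing to get right is the bookkeeping of indices and the splitting of the linear factor. The one point worth stating explicitly in the write-up is why the split $(X+Y-\ell W) = (X-mW) + (Y-(\ell-m)W)$ is legitimate term-by-term — namely that it is an identity for each fixed $m$, and we are free to use a different such decomposition in each summand. An alternative, slicker route would be to observe that $P_\ell(X,W) = W^\ell \binom{X/W}{\ell}\ell!$ (interpreting $\binom{\cdot}{\ell}$ as the falling-factorial polynomial) and invoke the Vandermonde–Chu identity $\binom{s+t}{\ell} = \sum_m \binom{s}{m}\binom{t}{\ell-m}$ with $s = X/W$, $t = Y/W$, but since one must then clear denominators and handle $W = 0$ as a limiting/specialization case, the direct induction is cleaner and entirely self-contained.
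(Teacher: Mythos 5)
Your induction is correct: the recursion $P_{\ell+1}(X,W)=P_\ell(X,W)\,(X-\ell W)$, the term-by-term splitting $(X+Y-\ell W)=(X-mW)+(Y-(\ell-m)W)$, the reindexing, and Pascal's rule all go through exactly as you describe, and the boundary terms are handled properly. However, this is a genuinely different route from the paper's. The paper does not induct; it observes that both sides are polynomials in $X,Y$ over the infinite integral domain $\ZZ[W]$, so it suffices to verify the identity at all points $(aW,bW)$ with $a,b\in\ZZ$. There it uses the closed form $P_\ell(aW,W)=\ell!\binom{a}{\ell}W^\ell$ to reduce both sides to $W^\ell\,\ell!$ times, respectively, $\binom{a+b}{\ell}$ and $\sum_m\binom{a}{m}\binom{b}{\ell-m}$, and concludes by the Vandermonde identity (extracted from $(1+W)^a(1+W)^b=(1+W)^{a+b}$). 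In effect the paper carries out the ``slicker route'' you mention at the end, but sidesteps the $W=0$ and denominator issues you worried about by specialising $X$ and $Y$ to integer multiples of $W$ rather than dividing by $W$. What the paper's approach buys is transparency about why the identity is true --- it is literally Vandermonde--Chu for the falling factorials $P_\ell$ --- whereas your induction buys a self-contained polynomial-identity proof in $\ZZ[X,Y,W]$ with no appeal to evaluation or density. Both are complete and correct.
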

\begin{proof}
It suffices to show that the stated equality holds whenever one evaluates at points of the form $(aW,bW)$ where $a$ and $b$ are integers.  On the lefthand side, we have $P_\ell(aW + bW,W) = \prod_{i=0}^{\ell-1} ((a+b-i)W) = \ell ! \binom{a+b}{\ell} W^\ell$.   On the righthand side we have

\begin{eqnarray*}
\sum_{m=0}^\ell \binom{\ell}{m}P_m(aW,W) P_{\ell -m}(bW,W) & = & \sum_{m=0}^\ell \binom{\ell}{m} m! \binom{a}{m} W^m (\ell - m)! \binom{b}{\ell - m} W^{\ell - m}  \\ & = &  W^\ell \sum_{m=0}^\ell \frac{\ell! m! (\ell - m)!}{m! (\ell-m)!}  \binom{a}{m} \binom{b}{\ell - m} \\  & = &  W^\ell \ell! \sum_{m=0}^\ell \binom{a}{m} \binom{b}{\ell - m}  \\ & = & W^\ell \ell! \binom{a+b}{\ell}
\end{eqnarray*}
The last equality is obtained by comparing the coefficients of $W^\ell$ in the expansion of the equality $(1+W)^a (1 + W)^b = (1+W)^{a+b}$.
\end{proof}

Now
\begin{eqnarray*}
\Delta_{(m,n)}\circ E_{I+J}(x)
&=&
\sum_{K\leq I+J}\partial_K(x)\beta_K(X_1+Y_1,\dots,X_e+Y_e)\\
&=&
\sum_{K\leq I+J}\partial_K(x)\prod_{i=1}^e P_{K_i}(X_i+Y_i,c_i).
\end{eqnarray*}
Using Lemma~\ref{sumexpression} to expand this, one sees that $\Delta$-iterativity is equivalent to the iteration rule claimed by the proposition.

To see that the claimed Leibniz rule is equivalent to the $E_I$ being homomorphisms, we need to compute the product of two standard basis vectors.
First we observe:

\begin{lemma}
\label{productexpression}
$P_n(X,W) P_m(X,W) = \sum_{i=0}^n  i! \binom{n}{i} \binom{m}{i} W^i P_{m+n - i}(X)$
\end{lemma}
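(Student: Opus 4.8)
The plan is to prove Lemma~\ref{productexpression} by the same device used for Lemma~\ref{sumexpression}: both sides are polynomials in $X$ (of degree $n+m$) with coefficients in $\ZZ[W]$, so it suffices to check the identity after specialising $X \mapsto aW$ for every integer $a$, since infinitely many specialisations determine a polynomial in $X$. (Strictly one should check enough values of $a$; all integers is more than enough.)

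First I would record the basic evaluation $P_k(aW,W) = \prod_{i=0}^{k-1}(a-i)W = k!\binom{a}{k}W^k$, which already appeared in the proof of Lemma~\ref{sumexpression}. Applying this to the left-hand side gives
$$P_n(aW,W)P_m(aW,W) = n!\,m!\binom{a}{n}\binom{a}{m}W^{n+m}.$$
Applying it to the right-hand side gives
$$\sum_{i=0}^n i!\binom{n}{i}\binom{m}{i}W^i \cdot (m+n-i)!\binom{a}{m+n-i}W^{m+n-i} = W^{n+m}\sum_{i=0}^n i!\binom{n}{i}\binom{m}{i}(m+n-i)!\binom{a}{m+n-i}.$$
So, after cancelling $W^{n+m}$, the lemma reduces to the purely numerical identity
$$n!\,m!\binom{a}{n}\binom{a}{m} = \sum_{i=0}^n i!\binom{n}{i}\binom{m}{i}(m+n-i)!\binom{a}{m+n-i},$$
valid for all integers $a$ (equivalently, as an identity of polynomials in $a$).

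The main step, then, is this binomial identity, which I would prove combinatorially. Rewrite $i!\binom{n}{i}\binom{m}{i}(m+n-i)! = \frac{n!\,m!\,(m+n-i)!}{i!\,(n-i)!\,(m-i)!}$, so the claim becomes
$$\binom{a}{n}\binom{a}{m} = \sum_{i=0}^n \frac{(m+n-i)!}{i!\,(n-i)!\,(m-i)!}\binom{a}{m+n-i}.$$
Both sides count, for a set of size $a$, the number of ways to choose an ordered pair $(S,T)$ with $|S|=n$, $|T|=m$; stratifying by $i := n+m-|S\cup T|$ (the size of the overlap $|S\cap T|$), one picks the union $U = S\cup T$ of size $m+n-i$ in $\binom{a}{m+n-i}$ ways, then distributes the elements of $U$ into (only in $S$)/(only in $T$)/(in both), which is the multinomial coefficient $\binom{m+n-i}{n-i,\,m-i,\,i} = \frac{(m+n-i)!}{(n-i)!(m-i)!\,i!}$. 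This gives the identity. I expect the main obstacle to be purely bookkeeping — getting the index ranges and the correspondence between $i$ (the summation index of the lemma) and the overlap size right — rather than anything conceptually hard; the reduction to a finite check via polynomial identity, exactly as in Lemma~\ref{sumexpression}, makes the argument routine once that dictionary is set up. With Lemmas~\ref{sumexpression} and~\ref{productexpression} in hand, the Leibniz computation for $E_I(xy)$ and the iteration computation for $\Delta_{(m,n)}\circ E_{I+J}$ that were begun just before and after Lemma~\ref{productexpression} go through, completing the proof of Proposition~\ref{ithd}.
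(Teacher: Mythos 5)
Your proof is correct, but it takes a genuinely different route from the paper's. The paper proves Lemma~\ref{productexpression} by induction on $n$: writing $P_{n+1} = (X - nW)P_n$ and splitting $X - nW = \bigl(X - (m+n-i)W\bigr) + (m-i)W$, the first piece is absorbed into $P_{m+n-i}$ to produce $P_{m+n+1-i}$ while the second shifts the index, and the two resulting sums are recombined via a Pascal-type manipulation of the coefficients. You instead transplant the specialisation device that the paper uses for Lemma~\ref{sumexpression} — evaluate at $X = aW$, note that $P_k(aW,W) = k!\binom{a}{k}W^k$, and reduce to the identity
$$\binom{a}{n}\binom{a}{m} = \sum_{i=0}^{n} \frac{(m+n-i)!}{i!\,(n-i)!\,(m-i)!}\binom{a}{m+n-i},$$
which you prove by counting pairs of subsets $(S,T)$ of an $a$-set stratified by $|S\cap T|=i$. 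Both arguments are sound. Your version has the virtue of uniformity (the two lemmata are handled by one method) and it explains \emph{why} the coefficient is $i!\binom{n}{i}\binom{m}{i}$ — it is the multinomial count of overlaps — whereas the paper's induction is self-contained and purely algebraic, requiring no appeal to specialisation or to extending a combinatorial identity from non-negative integers to a polynomial identity in $a$. Two small bookkeeping points you should make explicit: the rewriting $i!\binom{n}{i}\binom{m}{i}(m+n-i)! = \frac{n!\,m!\,(m+n-i)!}{i!(n-i)!(m-i)!}$ is only literal for $i\le\min(n,m)$ (for $i>m$ both sides of the identity have a vanishing term, via $\binom{m}{i}=0$, so nothing is lost); and the passage from ``holds at $X=aW$ for all integers $a$'' to the polynomial identity is cleanest by viewing both sides in $\QQ(W)[X]$, where a nonzero polynomial cannot have infinitely many roots. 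Neither is a gap.
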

\begin{proof}
The case of $n=0$ is clear.  For the inductive step,
\begin{eqnarray*}
P_{n+1}  P_m  & = & \sum_{i=0}^n i! \binom{n}{i} \binom{m}{i} W^i  (X - (m+n-i)W + (m-i)W)P_{m+n-i} \\
  & = & \sum_{i=0}^n  i! \binom{n}{i} \binom{m}{i} W^i  P_{m+1+n-i} + i! \binom{n}{i} \binom{m}{i} W^{i+1} (m-i) P_{m+n-i} \\
  & = & \sum_{i=0}^{n+1} (i! \binom{n}{i} \binom{m}{i}  + (i-1)! \binom{n}{i-1} \binom{m}{i-1} (m-i+1)) W^iP_{m+n+1-i} \\
  & = & \sum_{i=0}^{n+1}  (\frac{m!}{(m-i)!} \binom{n}{i} + \binom{n}{i-1} \frac{(i-1)! m! (m-i+1)}{(m-i+1)! (i-1)!}) W^i P_{m+n+1-i} \\
  & = & \sum_{i=0}^{n+1} i! \binom{m}{i} \binom{n+1}{i} W^i P_{m+n+1-i}
\end{eqnarray*}
\end{proof}

Lemma~\ref{productexpression} leads to an expression for the product rule, but not the claimed one.  For the sake of definiteness, let us write down the Leibniz rule predicted by Lemma~\ref{productexpression}. Expanding the exponential in two different ways, we have 
\begin{eqnarray*} 
\sum \partial_L (ab)\beta_L & = & E(ab) \\ 
&=& E(a) E(b) \\
&=& \sum_{I,J} \partial_I(a) \partial_J(b)\beta_I\beta_J \\
&=& \sum_{I,J} \sum_K \partial_I(a) \partial_J(b) K! \binom{I}{K} \binom{J}{K} c^K\beta_{I+J-K}  
\end{eqnarray*}
So multiplicativity of $E$ amounts to the product rule:
\begin{eqnarray}
\label{hd-preprod}
\partial_L(ab) & = & \sum_{I+J = K+L}  K! c^K \binom{I}{K} \binom{J}{K} \partial_I(a) \partial_J(b).
\end{eqnarray}
To put~(\ref{hd-preprod}) in the form claimed by Proposition~\ref{ithd}, we should compute the iterates of $\sigma$.
Under the hypothesis of iterativity, if $\sigma(x) = c \partial_1(x) + x$, then $\sigma^n(x) = \sum_{i=0}^n c^i \frac{n!}{(n-i)!} \partial_i(x)$.    Indeed, $\sigma^n(x) = \sum_{i=0}^n c^i \binom{n}{i} \partial_1^i(x)$.  Via iterativity, we have $i! \partial_i = \partial_1^i$ so that $\binom{n}{i} \partial_1^i = \frac{n!}{(n-i)!} \partial_i$.
Putting together this observation with~(\ref{hd-preprod}), we compute
\begin{eqnarray*}
\partial_L(ab) & = & \sum_{I+J = K+L}  K! c^K \binom{I}{K} \binom{J}{K} \partial_I(a) \partial_J(b) \\
   & =& \sum_{I'+J=L}  \sum_{K=0}^J K! c^K \binom{I'+K}{K} \binom{J}{K} \partial_{I'+K}(a) \partial_J(b) \\ 
   &=& \sum_{I'+J=L} \sum_{K=0}^J K! c^K \binom{J}{K} \partial_K (\partial_{I'}(a)) \partial_J(b) \\
   &=& \sum_{I'+J=L} \sigma^J(\partial_{I'}(a)) \partial_J(b)
\end{eqnarray*} 
The computation is reversible, and so we get that~(\ref{hd-preprod}) is equivalent to the desired product rule.
This completes the proof of Proposition~\ref{ithd}.
\qed

\smallskip

Let us note some specializations.
If $A \to k$ factors through $\ZZ[c_1,\ldots,c_e] \to \ZZ[c_1,\ldots,c_e]/(c_1,\ldots,c_e) = \ZZ$, then an iterative $\underline\cD$-ring is simply an iterative Hasse-Schmidt differential ring.
If $k$ is a $\QQ$-algebra, then it follows from the iteration rule that $\displaystyle \partial_I = \frac{1}{I!} \partial_1^{I_1} \circ \cdots \circ \partial_e^{I_e}$ so that the full stack is already determined by the operators $\partial_1,\ldots,\partial_e$.
If $c_i$ is a unit in $R$, then  $\partial_i = c_i^{-1} (\sigma_i - \id)$.
Thus, in the case of $\QQ[c_1^{\pm 1},\ldots,c_e^{\pm 1}]$-algebras, the category of $\underline\cD$-algebras is equivalent to the that of difference algebras for $e$ commuting endomorphisms.
However, in positive characteristic, even when the parameters $c_i$ are units, it is not the case that a $\underline\cD$-ring is essentially just a difference ring.

Algebras over $\ZZ[c]$ with additive operators $D:R \to R$ satisfying $D(xy) = x D(y) + y D(x) + c D(x) D(y)$ were considered by the second author in~\cite{Sc-thesis} and~\cite{scanlon2000}.   Andr\'{e} developed a theory of confluence between difference and differential operators in~\cite{Andre} taking both operators $\sigma:R \to R$ and $\delta:R \to R$ as basic where $\sigma$ is a ring endomorphism and $\delta$ is an additive operator satisfying the twisted Leibniz rule $\delta(xy) = \sigma(x) \delta(y) + \delta(x) y$.  If there is some $b \in R$ with $\delta(b) \in R^\times$, then one may express $\sigma(x) = c \delta(x) + x$ where $c := \frac{\sigma(b) - b}{\delta(b)}$.  The operator $\delta$ is then a $D$-operator in the above sense.  

Hardouin develops a theory of iterative $q$-difference operators in~\cite{Har}.   Her axioms are very similar to ours (with $e=1$).  For instance, the Leibniz rules are exactly the same.  However, there are some major distinctions.  The parameter $c$ is $(q-1)t$ so that the operator $\delta_1(x) = \frac{\sigma_q(x) - x}{(q-1) t}$ where $\sigma_q:\CC(t) \to \CC(t)$ is the automorphism $f(t) \mapsto f(qt)$ is not $\ZZ[c]$-linear.  Additionally, her iteration rules involve the $q$-analogues of the binomial coefficients.  Most importantly, her exponential maps take values in noncommutative difference algebraic rings.   Some aspects of the $q$-iterative operators may be incorporated into our setting by working with the ring schemes $\cD_n(R) :=  R[\epsilon]/(\prod_{i=0}^{n-1} (\epsilon - q^i))$.
We leave to future work any further comparisons between these theories, as well as the more general issue of relaxing the definition of $\underline\cD$-rings so that the operators need not be linear over the base ring.


\end{document}